\newtheorem{theorem}{Theorem}[section]
\newtheorem{corollary}[theorem]{Corollary}
\newtheorem{hypothesis}[theorem]{Hypothesis}
\newtheorem{proposition}[theorem]{Proposition}
\newtheorem{lemma}[theorem]{Lemma}
\newtheorem{question*}{Question}
\newtheorem{problem*}{Problem}
\theoremstyle{definition}
\theoremstyle{remark}
\newtheorem*{remark}{Remark}
\numberwithin{equation}{section}
\crefname{figure}{Figure}{Figures}
\theoremstyle{plain}
\newtheorem*{theorem*}{Theorem}
\crefname{theorems}{Theorem}{Theorems}
\crefname{corollaries}{Corollary}{Corollaries}
\newtheorem*{corollary*}{Corollary}
\crefname{corollaries*}{Corollary}{Corollaries}
\crefname{lemma}{Lemma}{Lemmata}
\crefname{proposition}{Proposition}{Propositions}
\crefname{conjectures}{Conjecture}{Conjectures}
\newtheorem*{conjonjecture*}{Conjecture}
\crefname{conjonjectures*}{Conjecture}{Conjectures}
\crefname{definitions}{Definition}{Definitions}
\crefname{hypotheses}{Hypothesis}{Hypotheses}
\newcommand{\Z}{\mathbb{Z}}
\newcommand{\R}{\mathbb{R}}
\newcommand{\Q}{\mathbb{Q}}
\newcommand{\re}{\textup{Re}}
\newcommand{\im}{\textup{Im}}
\DeclareFontFamily{U}  {MnSymbolF}{}
\DeclareSymbolFont{symbolsMN}{U}{MnSymbolF}{m}{n}
\DeclareFontShape{U}{MnSymbolF}{m}{n}{
    <-6>  MnSymbolF5
   <6-7>  MnSymbolF6
   <7-8>  MnSymbolF7
   <8-9>  MnSymbolF8
   <9-10> MnSymbolF9
  <10-12> MnSymbolF10
  <12->   MnSymbolF12}{}
\DeclareFontShape{U}{MnSymbolF}{b}{n}{
    <-6>  MnSymbolF-Bold5
   <6-7>  MnSymbolF-Bold6
   <7-8>  MnSymbolF-Bold7
   <8-9>  MnSymbolF-Bold8
   <9-10> MnSymbolF-Bold9
  <10-12> MnSymbolF-Bold10
  <12->   MnSymbolF-Bold12}{}
\DeclareMathSymbol{\tbigtimes}{\mathop}{symbolsMN}{2}
\newcommand*{\bigtimes}{%
  \DOTSB
  \tbigtimes
  \slimits@ 
}
\renewcommand{\bar}{\overline}
\renewcommand{\epsilon}{\varepsilon}
\renewcommand{\pmod}[1]{\, (\mathrm{mod} {\, #1})}
\DeclareMathAlphabet{\mathpzc}{OT1}{pzc}{m}{it}
\renewcommand{\pmod}[1]{\,(\mathrm{mod}\,\,#1)}
\let\@wraptoccontribs\wraptoccontribs
\title[Bombieri's log-free density estimate and S{\'a}rk{\"o}zy's theorem]{An explicit version of Bombieri's log-free density estimate and S{\'a}rk{\"o}zy's theorem for shifted primes}
\author{Jesse Thorner}
\address{Department of Mathematics, University of Illinois, Urbana, IL 61801, USA}
\email{jesse.thorner@gmail.com}
\author{Asif Zaman}
\address{Department of Mathematics, University of Toronto, 40 St. George Street, Toronto, ON, Canada M5S 2E4}
\email{zaman@math.toronto.edu}
\begin{document}

\begin{abstract}

We make explicit Bombieri's refinement of Gallagher's log-free ``large sieve density estimate near $\sigma = 1$'' for Dirichlet $L$-functions.   We use this estimate and recent work of Green to prove that if $N\geq 2$ is an integer, $A\subseteq\{1,\ldots,N\}$, and for all primes $p$ no two elements in $A$ differ by $p-1$, then $|A|\ll N^{1-1/10^{18}}$.  This strengthens a theorem of S{\'a}rk{\"o}zy.
\end{abstract}

\maketitle

\section{Introduction and statement of results}

Let $N\geq 2$ be an integer, and let $\delta(N)$ be the relative density of a largest set $A\subseteq\{1,\ldots,N\}$ such that for all primes $p$, no two elements in $A$ differ by $p-1$.  Erd\H{o}s conjectured that $\delta(N) \to 0$ as $N \to \infty$, and S{\'a}rk{\"o}zy \cite{Sarkozy} proved the conjecture in the form $\delta(N)\ll (\log\log N)^{-2+o(1)}$.  Several improvements followed (e.g., Lucier \cite{Lucier} and Ruzsa and Sanders \cite{RuzsaSanders}).  Wang \cite{Wang} proved that there exists an absolute and effectively computable constant $\Cl[abcon]{Wang}>0$ such that $\delta(N)\ll \exp(-\Cr{Wang}(\log N)^{1/3})$.  The proofs in \cite{RuzsaSanders,Wang} use the distribution of zeros of Dirichlet $L$-functions.  Assuming the generalized Riemann hypothesis for Dirichlet $L$-functions (GRH), Ruzsa and Sanders \cite{RuzsaSanders} proved that $\delta(N)\ll \exp(-\Cr{Wang}(\log N)^{1/2})$ and asked whether GRH implies the existence of a power-saving bound on $\delta(N)$.

In a recent tour de force, Green \cite{Green} unconditionally proved that such a power-saving bound exists---that is, there exists a constant $\Cl[abcon]{Green}>0$ such that $\delta(N)\ll N^{-\Cr{Green}}$.  Assuming GRH, Green's arguments simplify to show that any fixed $0<\Cr{Green}<\frac{1}{12}$ is admissible \cite{Green2}, but an unconditional admissible value of $\Cr{Green}$ was not produced.  In this paper, we produce such an admissible value.

\begin{theorem}
	\label{thm:Sarkozy}
If $A\subseteq\{1,\ldots,N\}$ and no prime $p$ satisfies $p-1\in A-A$, then $|A|\ll N^{1-1/10^{18}}$.
\end{theorem}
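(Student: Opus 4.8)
The strategy is to combine Green's Fourier-analytic reduction of Sárközy-type problems (which converts a density bound for $A$ into a statement about the multiplicative structure of a related set and ultimately into a question about zeros of Dirichlet $L$-functions) with an explicit log-free zero-density estimate of Bombieri--Gallagher type. The point of making Bombieri's density estimate explicit is that Green's conditional calculation \cite{Green2} shows any $\Cr{Green}<\tfrac1{12}$ works under GRH; unconditionally one loses a constant factor governed by the quality of the zero-density bound near $\sigma=1$, and one must track every numerical constant through that argument.

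First I would recall the explicit log-free density estimate: for a modulus $q$ (or a range of moduli $q\le Q$), the number of zeros $\rho=\beta+i\gamma$ of $\prod_{\chi\bmod q}L(s,\chi)$ with $\beta\ge 1-\lambda$ and $|\gamma|\le T$ is bounded by $(q T)^{c\lambda}$ times a polynomial factor, with an explicit absolute constant $c$ (this is the content of the first half of the paper, making Bombieri's refinement of Gallagher effective). Second, I would insert this into Green's machinery in place of the qualitative input he uses: Green's argument produces a small set of "bad" moduli/characters whose $L$-functions have zeros very close to $1$, and controls the contribution of the rest via such a density estimate; the exponent $\tfrac1{12}$ in the GRH case degrades to $1/10^{18}$ once the explicit value of $c$ (and the various polynomial losses, the sieve constants in the $\mathrm{RSC}_n$ framework, and Green's own combinatorial constants) are propagated. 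Third, I would carefully choose the free parameters (the threshold $\lambda$ defining "zeros near $1$", the cutoff $Q$ for the modulus, and the height $T$) as explicit powers of $N$ so that the main term dominates and the error is $o(N)$, then verify that the resulting admissible exponent is at least $1/10^{18}$.

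The main obstacle is the bookkeeping: Green's proof is long and layered (Fourier reduction, passage to a multiplicative problem, iteration, use of the large sieve and of zero-density information), and every place where he is content with an ineffective or merely qualitative statement must be replaced by the explicit version while keeping the final constant manageable. In particular one must be careful that making the zero-density estimate explicit does not force the height $T$ or modulus $Q$ to be taken so large that the error term swamps the gain; balancing these and confirming the clean final bound $|A|\ll N^{1-1/10^{18}}$ — rather than something with an unwieldy or vanishing exponent — is where essentially all the work lies. The constant $10^{18}$ is deliberately lossy, chosen so that none of these estimates needs to be optimized.
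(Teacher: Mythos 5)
Your overall strategy is the same as the paper's: prove an explicit log-free density estimate and insert it into Green's machinery, tracking constants until a concrete exponent falls out. But the theorem being proved \emph{is} the constant, and your proposal stops exactly where its content begins: you give no derivation of $1/10^{18}$ beyond saying the bookkeeping is lossy. Moreover, the route you sketch—re-choosing the threshold $\lambda$, the modulus cutoff $Q$, and the height $T$ as powers of $N$ and re-balancing Green's main-term/error-term analysis yourself—overstates (and under-specifies) what is needed, and if taken literally would require re-running Green's Fourier-analytic and iterative argument, which you give no means of doing. The paper instead uses Green's Theorem 2.2 and his Section 15.1 as a black box: Green already proves $|A|\ll N^{1-(\kappa_1-\kappa_2)}$ with $\kappa_1=\tau_0/(6\times 10^{4})$ and $\kappa_2=2\tau_0/10^{6}$ whenever $T=N^{\tau_0}$ is admissible in his Proposition 7.7, so the \emph{only} new quantitative input required is an explicit admissible lower bound $c$ for $\tau_0$ in an explicit version of that proposition.

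Producing that $c$ is the missing step, and it is a specific chain rather than generic propagation: one needs (a) an explicit zero-free region with the exceptional-zero dichotomy ($\lambda_1=\tfrac{1}{20}$ in \cref{prop:BG_ZFR}, resting on \cref{thm:GZFR} and, for the exceptional case, on the explicit Deuring--Heilbronn repulsion \cref{thm:DH}); (b) density bounds valid for \emph{all} $\sigma\geq 0$ and $Q\geq 10$, not just $\sigma$ near $1$ (this is \cref{cor:GLFZDE}, which supplements \cref{thm:GLFZDE} with the crude count of \cref{lem:basic_density}), giving $\lambda_2=10^{103}$, $\lambda_3=198$ in \cref{prop:BG-density-no-exception,prop:BG-density-exception}; (c) a choice of $B$ (the paper takes $B=323\,905$) for which the weighted zero sums in \cref{prop:ZDE_application} are at most $1/(4M)$ in the non-exceptional case, and for which $e^{-Bx}+\tfrac{x}{2}\leq 1$ on $0<x\leq\lambda_1$ handles the exceptional zero, where $M=7\cdot 3^{6}\cdot 2^{22\,993}$ is Green's constant; and then (d) the admissibility condition $c<\min\{1/(16B),\,10^{-6}\lambda_1/(32\log(4M))\}$, which yields $c=9\times 10^{-14}$ in \cref{prop:pre-Sarkozy} and hence $\kappa_1-\kappa_2\geq 9\times 10^{-14}\,(\tfrac{1}{6\times 10^{4}}-\tfrac{2}{10^{6}})>10^{-18}$. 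None of this appears in your write-up, so the stated bound $|A|\ll N^{1-1/10^{18}}$ is not established; your plan is compatible with the paper's proof, but the quantitative heart of the argument is absent.
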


The new ingredient that we supply in order to deduce \cref{thm:Sarkozy} from the work in \cite{Green} is a certain log-free zero density estimate for Dirichlet $L$-functions.  Such a result was first proved by  Linnik as part of his proof that there exists an absolute and effectively computable constant $b>0$ such the least prime in an arithmetic progression modulo $q$ is at most $q^b$ \cite{Linnik}.  For an integer $q\geq 1$, let $\chi\pmod{q}$ be a Dirichlet character, and let $L(s,\chi)$ be its $L$-function.  For $Q\geq 3$ and $\sigma\geq 0$, define
\begin{equation}
	\label{eqn:prod_L-functions}
	\mathscr{L}(s,Q):=\prod_{1\leq q\leq Q}\prod_{\substack{\chi\pmod{q} \\ \textup{$\chi$ primitive}}}L(s,\chi),\qquad \beta_1(Q):=\max\{\beta\in\R\colon \mathscr{L}(\beta,Q)=0\}
\end{equation}
and
\begin{equation}
\label{eqn:NchiDef}
\begin{aligned}
N(\sigma,Q)&:=\#\{\rho=\beta+i\gamma\colon \beta>\sigma,~|\gamma|\leq Q,~\mathscr{L}(\rho ,Q)=0\},\\
N^*(\sigma,Q)&:=\#\{\rho=\beta+i\gamma \neq\beta_1(Q)\colon \beta>\sigma,~|\gamma|\leq Q,~\mathscr{L}(\rho ,Q)=0\}.
\end{aligned}
\end{equation}
(All sums and counts over $\rho$ weigh the contribution for each $\rho$ with its multiplicity.)  Gallagher \cite{Gallagher} used Tur{\'a}n's lower bound for power sums with a ``pre-sifted'' large sieve inequality (\cref{lem:large_sieve}) to prove that there exist absolute and effectively computable constants $\Cl[abcon]{Gallagher1}>0$ and $\Cl[abcon]{Gallagher2}>0$ such that 
\begin{equation}
\label{eqn:Gallagher}
N(\sigma,Q)\leq \Cr{Gallagher1} Q^{\Cr{Gallagher2}(1-\sigma)},\qquad \sigma\geq 0,\quad Q\geq 3.
\end{equation}

Jutila \cite{Jutila} used Selberg's theory of pseudocharacters to sharpen \eqref{eqn:Gallagher}, proving that for all $\epsilon>0$, there exists an effectively computable constant $c(\epsilon)>0$ such that
\begin{equation}
\label{eqn:Jutila}
N(\sigma,Q)\leq c(\epsilon) Q^{(6+\epsilon)(1-\sigma)},\qquad \sigma\geq 4/5,\quad Q\geq 3.
\end{equation}
The exponent in \eqref{eqn:Jutila} is as strong as the generalized Lindel{\"o}f hypothesis implies.  Another refinement comes from Bombieri \cite{Bombieri2}, who proved that there exist absolute and effectively computable constants $\Cl[abcon]{Bombieri1}>0$ and $\Cl[abcon]{Bombieri2}>0$ such that
\begin{equation}
	\label{eqn:Bombieri}
N^*(\sigma,Q)\leq \Cr{Bombieri1} (1-\beta_1(Q))(\log Q)Q^{\Cr{Bombieri2}(1-\sigma)},\qquad \sigma\geq 0,\quad Q\geq 3.
\end{equation}
If $\beta_1(Q)$ and $\sigma$ are suitably close to $1$ as a function of $Q$, then \eqref{eqn:Bombieri} improves upon \eqref{eqn:Gallagher} and \eqref{eqn:Jutila}.

We prove explicit versions of \eqref{eqn:Gallagher} and \eqref{eqn:Bombieri} that hold for all $Q\geq 3$ and all $\sigma\geq \frac{39}{40}$ (see \cref{cor:GLFZDE} below for versions that permit $\sigma\geq 0$).  These appear to be the first explicit log-free zero density estimates for Dirichlet $L$-functions that do not require $Q$ to be ``sufficiently large'' or $\sigma$ to be ``sufficiently close'' to $1$, where sufficiency is not explicitly quantified.  Such estimates are useful in contexts beyond our proof of \cref{thm:Sarkozy}.

\begin{theorem}
	\label{thm:GLFZDE}
Let $Q\geq 3$ and $\sigma\geq \frac{39}{40}$.  If $N(\sigma,Q)$ and $N^*(\sigma,Q)$ are as in \eqref{eqn:NchiDef}, then
\[
N(\sigma,Q)\leq 10^{88}(10^{421}Q^{99})^{1-\sigma},\qquad N^*(\sigma,Q)\leq  10^{93} \min\{1,(1-\beta_1(Q))\log Q\}(10^{466}Q^{170})^{1-\sigma}.
\]
\end{theorem}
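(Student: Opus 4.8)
The plan is to follow the classical Turán power-sum / large-sieve architecture that underlies \eqref{eqn:Gallagher} and \eqref{eqn:Bombieri}, but to track every constant explicitly and to restrict $\sigma \geq \tfrac{39}{40}$ so that the numerous ``sufficiently large'' hypotheses in Gallagher's and Bombieri's treatments collapse into concrete inequalities. First I would fix a zero $\rho_0=\beta_0+i\gamma_0$ of $\mathscr{L}(s,Q)$ with $\beta_0>\sigma$ and $|\gamma_0|\le Q$, and associate to it the Dirichlet polynomial $\sum_{n} \Lambda(n)\chi(n) n^{-s}$ (summed over the relevant primitive $\chi$), truncated smoothly at a length $x$ to be chosen as a small power of $Q$. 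Using the explicit formula, the existence of $\rho_0$ forces this Dirichlet polynomial, evaluated along a short vertical segment near $s=1$, to be abnormally large; Turán's second main theorem on power sums (with fully explicit constants — here one invokes an effective version such as those in the work of Kolesnik--Straus or the refinements used by Bombieri) converts ``the polynomial is large somewhere on a segment'' into ``the polynomial is large at a single well-chosen point $s=\sigma_1$ with $\sigma_1-1$ a bounded multiple of $1-\sigma$.'' The key quantitative input here is that the power-sum length one can afford is $\asymp (1-\sigma)^{-1}$, which is exactly what produces the shape $Q^{c(1-\sigma)}$ rather than $Q^{c}$.

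Next I would sum the resulting lower bounds over all zeros $\rho$ counted by $N(\sigma,Q)$ (respectively $N^*(\sigma,Q)$), which on the one hand gives a lower bound proportional to $N(\sigma,Q)$ times $x^{-\kappa(1-\sigma)}$ for an explicit $\kappa$, and on the other hand is bounded above by a mean value of $|\sum_n \Lambda(n)\chi(n)n^{-\sigma_1-it}|^2$ over $q\le Q$, $\chi$ primitive, $|t|\le Q$. For the latter I would invoke the pre-sifted large sieve inequality \cref{lem:large_sieve} stated earlier, which contributes a factor $Q^2 + x$ rather than the naive $Q^3$ — this is the ``pre-sifting'' gain that keeps the exponents ($99$ and $170$) as small as they are. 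Choosing $x=Q^{\theta}$ with $\theta$ optimized against $\kappa$ then yields $N(\sigma,Q)\le B \cdot (D Q^{E})^{1-\sigma}$ for explicit $B,D,E$; carrying the arithmetic through with the margin afforded by $\sigma\ge \tfrac{39}{40}$ should land $E=99$ and $B=10^{88}$, $D=10^{421}$.

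For the starred estimate and the gain factor $\min\{1,(1-\beta_1(Q))\log Q\}$, I would follow Bombieri's device of weighting the explicit-formula / power-sum step by a factor that vanishes to first order at $\beta_1(Q)$ — concretely, replacing the test polynomial by one that incorporates the factor $(s-\rho_0)$ or a Deuring--Heilbronn type repulsion argument showing that the presence of an exceptional zero $\beta_1(Q)$ close to $1$ damps the contribution of every \emph{other} zero by the factor $(1-\beta_1(Q))\log Q$ (capped at $1$, since the unweighted bound is always available). This costs a larger zero-detection length and hence the worse exponent $170$ and constants $10^{93},10^{466}$; I would simply run the optimization separately for this weighted variant.

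The main obstacle I anticipate is not conceptual but bookkeeping: Turán's power-sum inequality in its standard explicit forms carries constants that blow up rapidly in the length of the sum, so the entire argument is viable only because the length is kept to a small multiple of $(1-\sigma)^{-1}$ and because $\sigma$ is bounded away from $1$ by the fixed amount $\tfrac{1}{40}$; verifying that the chain of explicit inequalities — effective explicit formula with a smooth weight, effective Turán bound, effective large sieve — actually closes with the claimed numerical constants (rather than, say, $10^{120}$ in the exponent) is where all the real work lies, and it will require choosing the smoothing weight, the segment length, the power-sum length, and $\theta$ simultaneously rather than greedily.
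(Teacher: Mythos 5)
Your proposal follows essentially the same route as the paper's proof: an explicit Gallagher--Bombieri argument in which each zero near $1+i\tau$ is detected by a Tur\'an-type power-sum lower bound (the Kolesnik--Straus bound, \cref{lem:turan}) applied to high derivatives of $L'/L$, the resulting prime-supported Dirichlet polynomial is averaged using the pre-sifted large sieve (\cref{lem:large_sieve}), and the starred bound comes from an explicit Deuring--Heilbronn repulsion estimate (\cref{thm:DH}), i.e.\ your second alternative rather than Bombieri's weighting device. One correction to the mechanism as you describe it: the power sum runs over the zeros in a disc of radius $\asymp 1-\sigma$ about $1+i\tau$, so its length is $\asymp(1-\sigma)\log Q$ (controlled by the small-disc zero count of \cref{cor:Linnik_lemma}), not $\asymp(1-\sigma)^{-1}$; it is this $\log Q$-proportional length with an $O(1)$ loss per term that produces the shape $Q^{O(1-\sigma)}$, and the pre-sifting (support on integers with $P^{-}(n)>Q$) serves to remove logarithm factors rather than to improve $Q^{3}$ to $Q^{2}+x$.
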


\begin{remark}
If $Q\geq 5$, then $\mathscr{L}(0,Q)=0$ and $\mathscr{L}(\sigma,Q)\neq 0$ for $\sigma\geq 1$.  Thus, we have that $\beta_1(Q)\in[0,1)$.
\end{remark}

The proof of \cref{thm:Sarkozy} is sensitive to the size of the implied constants, so we cannot use \eqref{eqn:Jutila} without making $c(\epsilon)$ explicit first.  We did not try to estimate $c(\epsilon)$ because this requires us to make certain intricate results of Huxley \cite{Huxley} on large values of Dirichlet polynomials completely explicit in all ranges. While we avoided this potentially onerous task for the sake of brevity, we encourage the interested reader to make \eqref{eqn:Jutila} explicit for all $Q\geq 3$ and all $\sigma\geq\frac{4}{5}$. 	Our proof of \cref{thm:Sarkozy} shows that further improvements to \cref{thm:GLFZDE} do not substantially improve the power savings in \cref{thm:Sarkozy} without a numerical optimization of the arguments in \cite{Green} or a substantial numerical improvement in the standard zero-free region for $\mathscr{L}(s,Q)$.

\subsection*{Outline of the paper}

\cref{sec:zeros} provides auxiliary results on the distribution of zeros of Dirichlet $L$-functions.  \cref{sec:notation} contains notation and conventions and a discussion the central optimization problem in \eqref{eqn:minimization_problem} whose solution is one of two key results (along with \cref{cor:Linnik_lemma}) that determines the quality of the constants in \cref{thm:GLFZDE}.  In \cref{sec:zero_detection}, we prove our zero detection result for zeros of Dirichlet $L$-functions (\cref{thm:detect}) that serves as the basis for our proof of \cref{thm:GLFZDE} in \cref{sec:proof_main_theorem}.  We use \cref{thm:GLFZDE} and the work in \cite{Green} to prove \cref{thm:Sarkozy} in \cref{sec:Sarkozy}.

\subsection*{Acknowledgements}

We thank Ben Green, Peter Humphries, Nathan Ng, Sarah Peluse, and Joni Ter{\"a}v{\"a}inen for helpful conversations and the anonymous referee for helpful comments.  We performed all numerical calculations with Mathematica 12.

\section{Auxiliary results on Dirichlet $L$-functions}
\label{sec:zeros}

Let $\chi\pmod{q_{\chi}}$ be a primitive Dirichlet character, and let
\[
L(s,\chi)=\prod_{p}(1-\chi(p)p^{-s})^{-1}=\sum_{n=1}^{\infty}\frac{\chi(n)}{n^s},\qquad \re(s)>1
\]
be its $L$-function.  We let $\delta_{\chi}$ be the indicator function for the trivial character modulo 1, whose $L$-function is the Riemann zeta function $\zeta(s)$.
\subsection{The Hadamard factorization}
All of the content in this subsection can be found in \cite{Davenport}. Let
\[
\mathfrak{a}_{\chi}=\frac{1-\chi(-1)}{2},\qquad \tau(\chi) = \sum_{j=1}^{q_{\chi}}\chi(n)e^{2\pi i j/q_{\chi}}.
\]
Since $\chi$ is primitive, we have that $|\tau(\chi)|=\sqrt{q_{\chi}}$.  The completed $L$-function
\[
\Lambda(s,\chi)=(s(s-1))^{\delta_{\chi}}\Big(\frac{q_{\chi}}{\pi}\Big)^{\frac{s+\mathfrak{a}_{\chi}}{2}}\Gamma\Big(\frac{s+\mathfrak{a}_{\chi}}{2}\Big)L(s,\chi)
\]
is entire of order one and satisfies the functional equation
\begin{equation}
	\label{eqn:functional_equation}
	\Lambda(s,\chi) = \frac{\tau(\chi)}{i^{\mathfrak{a}_{\chi}}\sqrt{q_{\chi}}}\Lambda(1-s,\bar{\chi}).
\end{equation}
Since $\Lambda(s,\chi)$ is entire of order one, it exhibits a Hadamard factorization
\[
\Lambda(s,\chi) = e^{A_{\chi}+B_{\chi}s}\prod_{\Lambda(\rho ,\chi)=0}\Big(1-\frac{s}{\rho }\Big)e^{\frac{s}{\rho }}.
\]
The zeros $\rho =\beta +i\gamma $ are the nontrivial zeros of $L(s,\chi)$, and they satisfy $0\leq \beta \leq 1$ and $\gamma\in\R$.  The poles of $s^{\delta_{\chi}}\Gamma(\frac{s+\mathfrak{a}_{\chi}}{2})$ are the trivial zeros; these form the sequence $(-2(j+\delta_{\chi})-\mathfrak{a}_{\chi})_{j=0}^{\infty}$.  Upon taking logarithmic derivatives, we find that
\begin{equation}
	\label{eqn:Hadamard}
\frac{L'}{L}(s,\chi) = B_{\chi}+\sum_{\rho }\Big(\frac{1}{s-\rho }+\frac{1}{\rho }\Big)-\delta_{\chi}\Big(\frac{1}{s-1}+\frac{1}{s}\Big)-\frac{1}{2}\log\frac{q_{\chi}}{\pi}-\frac{1}{2}\frac{\Gamma'}{\Gamma}\Big(\frac{s+\mathfrak{a}_{\chi}}{2}\Big).
\end{equation}
Since
\[
\re(B_{\chi})=-\sum_{\rho }\re\Big(\frac{1}{\rho }\Big),
\]
it follows that
\begin{equation}
	\label{eqn:Hadamard_real}
\re\Big(\frac{L'}{L}(s,\chi)\Big) = \sum_{\rho }\re\Big(\frac{1}{s-\rho }\Big)-\delta_{\chi}\re\Big(\frac{1}{s-1}+\frac{1}{s}\Big)-\frac{1}{2}\log\frac{q_{\chi}}{\pi}-\frac{1}{2}\re\Big(\frac{\Gamma'}{\Gamma}\Big(\frac{s+\mathfrak{a}_{\chi}}{2}\Big)\Big).
\end{equation}

\subsection{Zero-free regions}

We assemble some results on zero-free regions for $L(s,\chi)$ (that together will imply \cref{thm:GZFR}) along with a few other useful results.

\begin{lemma}
	\label{lem:RH}
If $\rho=\beta+i\gamma$ is a nontrivial zero of $\zeta(s)$ and $|\gamma|\leq 3\,000\,175\,332\,800$, then $\beta=\frac{1}{2}$.  If $|\gamma|> 3\,000\,175\,332\,800$, then $\beta\leq 1-1/(5.573\,412\log|\gamma|)$.
\end{lemma}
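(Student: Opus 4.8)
The plan is to obtain both halves of the lemma from the literature rather than proving anything from scratch. For the first half, I would cite the rigorous computational verification of the Riemann hypothesis due to Platt and Trudgian, which shows that every nontrivial zero $\rho=\beta+i\gamma$ of $\zeta(s)$ with $|\gamma|\le 3\,000\,175\,332\,800$ satisfies $\beta=\tfrac{1}{2}$; the particular numerical height in the statement is chosen exactly to match their verified range, so this assertion is immediate.

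For the second half, I would invoke the explicit de la Vall\'ee Poussin--type zero-free region for $\zeta(s)$ of Mossinghoff and Trudgian. Their method inserts a high-degree nonnegative trigonometric polynomial (in place of the classical $3+4\cos\theta+\cos 2\theta$) into the partial-fractions expansion \eqref{eqn:Hadamard_real} specialized to the trivial character, numerically optimizes the polynomial's coefficients, and uses the verified portion of RH as an input; the conclusion is that $\zeta(\sigma+it)\ne 0$ whenever $|t|\ge 2$ and $\sigma\ge 1-1/(5.573412\log|t|)$. Since $3\,000\,175\,332\,800>2$, this applies to every zero with $|\gamma|>3\,000\,175\,332\,800$ and yields precisely the stated bound on $\beta$.

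The only point that needs checking is that the two regimes dovetail without a gap, and they do: the Mossinghoff--Trudgian region is valid all the way down to $|t|\ge 2$, far below the verification height (and where it overlaps the RH-verified range it is in any case weaker than RH). I anticipate no genuine obstacle here --- the lemma is essentially a bookkeeping statement assembling the current best explicit inputs on the zeros of $\zeta$ into the single form most convenient for the arguments of \cref{sec:zeros}. Should a constant sharper than $5.573412$ ever be desired, one could rerun the Mossinghoff--Trudgian optimization with the larger Platt--Trudgian height as input, but the stated constant is more than adequate for everything in this paper.
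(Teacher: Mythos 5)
Your proposal is correct and matches the paper's proof, which likewise just cites Platt--Trudgian's verification of RH to height $3\,000\,175\,332\,800$ for the first claim and Mossinghoff--Trudgian's explicit zero-free region (valid for $|t|\geq 2$ with the constant $5.573\,412$) for the second. The extra description of the Mossinghoff--Trudgian method is accurate but not needed.
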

\begin{proof}
	The first part is \cite[Theorem 1]{PlattTrudgian}.  The second part is \cite[Theorem 1]{MossinghoffTrudgian}.
\end{proof}

\begin{lemma}
\label{lem:Platt}
If $q\leq 400\,000$ and $\chi\pmod{q}$ is a nontrivial primitive Dirichlet character, then $L(s,\chi)$ has no real nontrivial zeros and does not vanish in the region
\[
0<\Big|\re(s)-\frac{1}{2}\Big|\leq\frac{1}{2},\qquad 0<|\im(s)|\leq \max\Big\{\frac{10^8}{q},\frac{37\,500\,000(2-\mathfrak{a}_{\chi})}{q}+200\Big\}.
\]
\end{lemma}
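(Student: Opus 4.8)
The plan is to deduce both assertions from a rigorous numerical verification rather than from an analytic argument, since the range $q\leq 400\,000$ is far too large for the latter. For the zero-free box $0<|\re(s)-\tfrac12|\leq\tfrac12$, $0<|\im(s)|\leq H(q,\mathfrak{a}_\chi)$, I would appeal to Platt's interval-arithmetic verification of the Generalized Riemann Hypothesis for Dirichlet $L$-functions of modulus at most $400\,000$ up to an explicit height \cite{Platt}: this certifies that every nontrivial zero $\rho=\beta+i\gamma$ with $|\gamma|$ below that height has $\beta=\tfrac12$. For the absence of real nontrivial zeros (equivalently, of Siegel zeros) for $q\leq 400\,000$, I would quote the companion computation, also in \cite{Platt}, that $L(\sigma,\chi)\neq 0$ for every real $\sigma\in(0,1)$ and every primitive $\chi\pmod q$ with $q\leq 400\,000$.

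The steps would then be: (i) observe that all zeros of $L(s,\chi)$ with $0<|\re(s)-\tfrac12|\leq\tfrac12$ are nontrivial zeros in the critical strip lying off the critical line, so the region in question is precisely the portion of that strip, off both the critical line and the real axis, that the verification covers; (ii) use the functional equation \eqref{eqn:functional_equation} together with $\chi\mapsto\bar\chi$ to pass from the verified half-plane $\im(s)>0$ to the symmetric range $\im(s)<0$ and from $\re(s)<\tfrac12$ to $\re(s)>\tfrac12$; (iii) confirm that the explicit height in \cite{Platt}, in its stated normalization, dominates $\max\{10^8/q,\,37\,500\,000(2-\mathfrak{a}_\chi)/q+200\}$ for every $q\leq 400\,000$ and for both parities $\mathfrak{a}_\chi\in\{0,1\}$; and (iv) combine (i)--(iii) with the real-zero computation to obtain the stated nonvanishing.

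I do not anticipate a genuine mathematical obstacle, as the lemma repackages known rigorous computations; the only real work is careful bookkeeping in step (iii). Matching the constants is the delicate point: the reference records its verified height in a particular form, and one must check that no factor or additive term is lost in translating it to the expression $\max\{10^8/q,\,37\,500\,000(2-\mathfrak{a}_\chi)/q+200\}$, since \cref{thm:GLFZDE} and hence \cref{thm:Sarkozy} are sensitive to such constants. Should the reference's bound be stated only for $q$ in a subrange, one would additionally cite the finitely many small-$q$ computations needed to close the gap, but I expect \cite{Platt} to cover the entire range $q\leq 400\,000$ at once.
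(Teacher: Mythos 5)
Your proposal matches the paper's proof, which consists precisely of citing Platt's rigorous computations (Theorems 10.1 and 10.2 of \cite{Platt}), whose verified heights are exactly the quantities $10^8/q$ and $37\,500\,000(2-\mathfrak{a}_{\chi})/q+200$ appearing in the statement, so no translation of constants is actually needed. The bookkeeping and symmetry considerations you describe are implicit in that citation; the approach is the same.
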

\begin{proof}
	This follows from \cite[Theorems 10.1 and 10.2]{Platt}.
\end{proof}

\begin{lemma}
\label{lem:ZFR}
Let $q\geq 3$.  The product $\prod_{\chi\pmod{q}}L(s,\chi)$ has at most one zero in the region
\[
\re(s)\geq 1-\frac{\Cl[abcon]{ZFR}}{\log\max\{q,q|\im(s)|\}},\qquad \Cr{ZFR}=
\frac{1}{9.645\,908\,801}.
\]
If such a zero $\rho_1$ exists, then $q>400\,000$, $\rho_1$ is a real and simple zero of $\mathscr{L}_q(s)$, and there exists a unique quadratic character $\chi_1\pmod{q}$ such that $L(\rho_1,\chi_1)=0$.
\end{lemma}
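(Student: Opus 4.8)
The plan is to carry out the classical de~la~Vallée~Poussin--Landau zero-free region argument, made completely explicit, using the Hadamard formula \eqref{eqn:Hadamard_real} for the analytic input and \cref{lem:RH,lem:Platt} to cover the ranges in which the elementary argument degenerates. Write $\mathscr{L}_q(s)=\prod_{\chi\pmod q}L(s,\chi)$ and suppose $\rho=\beta+i\gamma$ is a zero of $\mathscr{L}_q(s)$ with $\beta>1-\Cr{ZFR}/\log\max\{q,q|\gamma|\}$, say $L(\rho,\chi)=0$. Replacing $\chi$ by the primitive character of conductor $q_\chi\mid q$ inducing it only discards zeros on the line $\re(s)=0$, and since $q_\chi\le q$ it keeps $\rho$ inside the analogous region, so we may assume $\chi$ is primitive. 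If $\chi$ is principal, then $\rho$ is a nontrivial zero of $\zeta$, and \cref{lem:RH} excludes it: for $|\gamma|\le 3\,000\,175\,332\,800$ one has $\beta=\tfrac12<1-\Cr{ZFR}/\log 3$, while for larger $|\gamma|$ the numerical inequality $5.573\,412\cdot\Cr{ZFR}<1$ gives $\beta\le 1-\tfrac{1}{5.573\,412\log|\gamma|}<1-\Cr{ZFR}/\log(q|\gamma|)$. Hence $\chi$ is a nontrivial primitive character of some conductor $q_\chi\ge 3$.

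Next I would run the positivity machinery. For a real parameter $\sigma>1$ to be optimized, combine \eqref{eqn:Hadamard_real} with the nonnegative Dirichlet series $\sum_n\Lambda(n)n^{-\sigma}(3+4\re(\chi(n)n^{-i\gamma})+\re(\chi^2(n)n^{-2i\gamma}))$ and $\sum_n\Lambda(n)n^{-\sigma}\lvert 1+\chi(n)n^{-i\gamma}\rvert^2$; in each, bound $-\re\tfrac{\zeta'}{\zeta}(\sigma)\le\tfrac{1}{\sigma-1}+O(1)$, discard the nonnegative terms $\re\tfrac{1}{s-\rho'}$ in \eqref{eqn:Hadamard_real} while keeping only the contribution $-\tfrac{1}{\sigma-\beta}$ of $\rho$ (and that of $\bar\rho$ when $\gamma\ne 0$, which equals $-\tfrac{\sigma-\beta}{(\sigma-\beta)^2+4\gamma^2}$), and estimate $\tfrac12\log\tfrac{q_\chi}{\pi}+\tfrac12\re\tfrac{\Gamma'}{\Gamma}(\tfrac{s+\ka}{2})$ explicitly by Stirling's formula. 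When $\chi^2$ is nonprincipal---in particular whenever $\chi$ is complex---the factor $-\re\tfrac{L'}{L}(\sigma+2i\gamma,\chi^2)$ has no pole and one arrives at
\[
\frac{4}{\sigma-\beta}\le\frac{3}{\sigma-1}+C\log(q_\chi(|\gamma|+2))
\]
for an explicit constant $C$, which after optimizing $\sigma$ forbids the zero outright. When $\chi$ is quadratic, the pole of $\zeta(\sigma+2i\gamma)$ intervenes, and I would split on the size of $|\gamma|$ relative to $1-\beta$: when $|\gamma|$ exceeds a fixed multiple of $1-\beta$ the $3$-$4$-$1$ inequality still wins because the identity $\re\tfrac{1}{\sigma-1+2i\gamma}=\tfrac{\sigma-1}{(\sigma-1)^2+4\gamma^2}$ keeps the pole term a definite amount below $\tfrac{1}{\sigma-1}$, whereas when $|\gamma|$ is smaller the $\lvert 1+\chi(n)n^{-i\gamma}\rvert^2$ inequality wins because then both $\rho$ and $\bar\rho$ contribute sizeably to $-\re\tfrac{L'}{L}(\sigma+i\gamma,\chi)$. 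Arranging the two regimes to overlap forces any surviving zero to be real, and the inequality obtained from $\sum_n\Lambda(n)n^{-\sigma}(1+\chi(n))^2\ge 0$ shows such a real zero is simple and that each quadratic $\chi$ contributes at most one zero to the region. Landau's inequality $(1+\chi_1(n))(1+\chi_2(n))\ge 0$ for distinct quadratic $\chi_1\pmod q$, $\chi_2\pmod q$ (so that $\chi_1\chi_2$ is nonprincipal) then upgrades this to: $\mathscr{L}_q(s)$ has at most one zero in the region, which if it exists is a simple real zero $\rho_1$ attached to a unique quadratic character $\chi_1$ whose conductor divides $q$.

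Finally, if $\rho_1$ exists then its character $\chi_1$ is real, nonprincipal, and has a real zero, so \cref{lem:Platt} forces the conductor of $\chi_1$---hence $q$---to exceed $400\,000$, and \cref{lem:Platt,lem:RH} simultaneously confirm there is nothing to exclude when $q\le 400\,000$. It remains to reconcile the region $\re(s)>1-c/\log(q_\chi(|\gamma|+2))$ produced by the argument with the stated region, using $q\ge 3$ to absorb the bounded discrepancy between $\log(q_\chi(|\gamma|+2))$ and $\log\max\{q,q|\gamma|\}=\log q+\log^{+}|\gamma|$ into the constant. The main obstacle is purely quantitative: the value $\Cr{ZFR}=1/9.645\,908\,801$ is the output of optimizing $\sigma$ in the \emph{worst} of the cases above---the quadratic character with small $|\gamma|$, weighed against Landau's two-character estimate and the loss in the reconciliation step---and making this rigorous requires an explicit lower bound for $\re\tfrac{\Gamma'}{\Gamma}$ (equivalently, explicit Stirling bounds) valid for all $s$ with $\re(s)\in(1,\tfrac32]$ and arbitrary imaginary part, together with a careful choice of the heights at which one hands off to \cref{lem:RH,lem:Platt}.
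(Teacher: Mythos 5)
Your overall strategy is the classical de la Vall\'ee Poussin--Landau positivity argument, and that is indeed the machinery that ultimately underlies this lemma; but as written the proposal has a genuine gap, because it never actually produces the number that \emph{is} the statement. The paper's proof is essentially a citation: for $q>400\,000$ the region with constant $\Cr{ZFR}=1/9.645\,908\,801$ and with $\max\{q,q|\im(s)|,10\}$ in place of $\max\{q,q|\im(s)|\}$ is McCurley's Theorem~1 \cite{McCurley}, and the removal of the ``$10$'' together with the range $3\leq q\leq 400\,000$ is absorbed by Platt's verification (\cref{lem:Platt}). Your plan amounts to reproving McCurley from scratch, and you concede in the final paragraph that the quantitative work --- explicit Stirling bounds, the optimization of $\sigma$ in the quadratic small-$|\gamma|$ regime, the hand-off heights to \cref{lem:RH} and \cref{lem:Platt}, and the reconciliation of $\log(q_\chi(|\gamma|+2))$ with $\log\max\{q,q|\im(s)|\}$ --- is left undone. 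For an explicit lemma of this kind that deferral is not a cosmetic omission: the crude intermediate bounds you propose (e.g.\ $-\re\frac{\zeta'}{\zeta}(\sigma)\leq\frac{1}{\sigma-1}+O(1)$ and an unquantified constant $C$ in front of $\log(q_\chi(|\gamma|+2))$) do not obviously close to the value $1/9.645\,908\,801$ uniformly in $q\geq 3$ and all heights, and obtaining that constant is precisely the delicate part of McCurley's paper. So the argument as proposed establishes at best a region $\re(s)\geq 1-c/\log\max\{q,q|\im(s)|\}$ for \emph{some} computable $c>0$, not the stated one.

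Two smaller points. First, the structural skeleton (reduction to primitive characters, the $3$-$4$-$1$ inequality for $\chi^2$ nonprincipal, the $|1+\chi(n)n^{-i\gamma}|^2$ inequality for quadratic $\chi$ with small $|\gamma|$, simplicity via $(1+\chi(n))^2\geq 0$, and Landau's $(1+\chi_1(n))(1+\chi_2(n))\geq 0$ for distinct quadratic characters) is sound and is the same route McCurley takes, so if you carried out the numerics you would in effect be re-deriving the cited input rather than giving a different proof. Second, your use of \cref{lem:Platt} to force $q>400\,000$ when the exceptional zero exists, and of \cref{lem:RH} to dispose of the principal character, matches how the paper upgrades McCurley's statement; that part of your proposal is correct and needs no change.
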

\begin{proof}
When $q>400\,000$, McCurley \cite[Theorem 1]{McCurley} proved this with $\max\{q,q|\im(s)|\}$ replaced by $\max\{q,q|\im(s)|,10\}$.  The claimed improvements follow from \cref{lem:Platt}.
\end{proof}

\begin{lemma}
\label{lem:Landau}
Let $\chi\pmod{q}$ and $\chi'\pmod{q'}$ be distinct real primitive characters with $\min\{q,q'\}>400\,000$.  If $\beta$ (resp. $\beta'$) is a real zero of $L(s,\chi)$ (resp. $L(s,\chi')$), then
\[
\min\{\beta,\beta'\}<1-\frac{15 - 10 \sqrt{2}}{(5 - \sqrt{5})\log(\frac{q'q}{17})}=1-\frac{0.310\,3782\ldots}{\log(\frac{q'q}{17})}.
\]
\end{lemma}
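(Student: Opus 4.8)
The plan is to run the classical argument of Landau on the non-existence of two close real zeros in a fully quantitative form. Since $\chi$ and $\chi'$ are real, for every prime power $p^k$ we have $(1+\chi(p^k))(1+\chi'(p^k))\ge 0$, so expanding the logarithms of the Euler products shows that $\zeta(s)L(s,\chi)L(s,\chi')L(s,\chi\chi')$ has a Dirichlet series with non-negative coefficients and constant term $1$ (equivalently, it is the Dedekind zeta function of the biquadratic field cut out by $\chi$ and $\chi'$). Writing $\psi$ for the primitive character inducing $\chi\chi'$—which is nontrivial because $\chi\ne\chi'$ are both real—we deduce that for every real $\sigma>1$,
\[
0\le -\frac{\zeta'}{\zeta}(\sigma)-\frac{L'}{L}(\sigma,\chi)-\frac{L'}{L}(\sigma,\chi')-\frac{L'}{L}(\sigma,\psi)-R(\sigma),
\]
where $R(\sigma)=\sum_{p\mid qq',\ p\nmid q_\psi}(\log p)\psi(p)p^{-\sigma}/(1-\psi(p)p^{-\sigma})$ accounts for the finitely many primes at which $\chi\chi'$, viewed as a completely multiplicative function, differs from $\psi$; such primes divide $\gcd(q,q')$ and $|R(\sigma)|\le\sum_{p\mid\gcd(q,q')}\frac{\log p}{p-1}$.

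Next I would feed the Hadamard expansion \eqref{eqn:Hadamard} into each of the four logarithmic derivatives at a real point $\sigma\in(1,2]$. One retains the pole of $\zeta$ at $s=1$ and the real zeros $\beta$ of $L(s,\chi)$ and $\beta'$ of $L(s,\chi')$ (together with their reflections $1-\beta,\,1-\beta'$, which are also zeros by the functional equation and are worth keeping for a sharp constant), and discards every other nontrivial zero $\rho$ using $\re\frac{1}{\sigma-\rho}\ge 0$, which only weakens the inequality in our favour. The remaining ingredients—the constant terms $-\frac12\log\frac{q_{\chi}}{\pi}$ and the digamma terms $\frac12\frac{\Gamma'}{\Gamma}\!\big(\frac{\sigma+\mathfrak{a}_{\chi}}{2}\big)$ from each of the four characters (the digamma terms are negative on $(1,2]$), the conductor of $\psi$, and $|R(\sigma)|$—are all estimated explicitly. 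Collecting everything produces an inequality of the shape
\[
\frac{1}{\sigma-\beta}+\frac{1}{\sigma-\beta'}<\frac{1}{\sigma-1}+E,
\]
with $E$ explicit. The reason for tracking $\gcd(q,q')$ above is that the true size of the conductor of $\psi$ supplies a factor $\gcd(q,q')^{-2}$ that cancels the crude bound for $|R(\sigma)|$; combined with the negative digamma terms, the $\log\pi$'s, and the hypothesis $\min\{q,q'\}>400\,000$, this is exactly what lets one take $E=\log(q'q/17)$.

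Finally I would optimize over $\sigma$. Writing $\sigma=1+t$ and $\lambda=1-\beta,\ \lambda'=1-\beta'$, the extremal configuration for the conclusion is $\lambda=\lambda'$, so it suffices to analyse $\frac{2}{t+\lambda}<\frac1t+E$, valid for all $t>0$; maximizing $\frac{2}{t+\lambda}-\frac1t$ over $t>0$ already forces $\lambda\ge\frac{(\sqrt2-1)^2}{E}$. A more delicate version of this step—exploiting the reflected zeros and carrying out the optimization with a free parameter beyond $t$, which is what produces the irrational constant—improves the numerator $(\sqrt2-1)^2=3-2\sqrt2$ to $\frac{15-10\sqrt2}{5-\sqrt5}$ and yields $\min\{\beta,\beta'\}<1-\frac{15-10\sqrt2}{(5-\sqrt5)\log(q'q/17)}$. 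I expect the main difficulty to be not conceptual but bookkeeping: obtaining clean, correctly-signed explicit bounds for the digamma terms and for $R(\sigma)$ over the full range of $\sigma$ selected by the optimization, and checking that the lower bound on the conductors genuinely collapses all the lower-order contributions into the single quantity $\log(q'q/17)$ with the stated constant in front.
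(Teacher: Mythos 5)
Your route is genuinely different from the paper's: the paper does not reprove this inequality at all, but simply cites McCurley's explicit two-zero theorem (his Theorem 2), which gives the bound with $\log(q'q/17)$ replaced by $\log\max\{q'q/17,13\}$, and then uses the computational verification in \cref{lem:Platt} to dispose of the range where the maximum could be $13$. What you sketch is the classical Landau positivity argument --- nonnegativity of the Dirichlet coefficients of $\zeta(s)L(s,\chi)L(s,\chi')L(s,\chi\chi')$, the Hadamard expansion \eqref{eqn:Hadamard}, discarding all zeros except $\beta$, $\beta'$ (and their reflections), and an optimization in $\sigma$ --- which is indeed the method underlying the cited result, so conceptually you are reconstructing the proof of the black box rather than the paper's two-line reduction to it.

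The gap is that the entire content of this lemma is the explicit numerology, and your sketch does not produce it. The optimization you actually carry out yields $\min\{\beta,\beta'\}<1-\tfrac{3-2\sqrt{2}}{E}$ with $3-2\sqrt{2}\approx 0.1716$, which is strictly weaker than the claimed $\tfrac{15-10\sqrt{2}}{5-\sqrt{5}}\approx 0.3104$; the step that is supposed to bridge this --- ``exploiting the reflected zeros and carrying out the optimization with a free parameter beyond $t$'' --- is asserted without any mechanism, and nothing in the sketch identifies where the factor $\tfrac{5+\sqrt{5}}{4}$ of improvement or the specific constant $17$ inside the logarithm would come from. Likewise, the claim that the conductor of $\psi$ contributes a factor $\gcd(q,q')^{-2}$ that exactly cancels the crude bound on $R(\sigma)$ is plausible for quadratic characters attached to fundamental discriminants but is not verified (the $2$-parts need care, and $\mathrm{cond}(\psi)$ a priori only divides $\mathrm{lcm}(q,q')$), and an error there changes the $17$. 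Even the sign claim for the digamma terms is not uniformly correct: for $\mathfrak{a}_{\chi}=1$ and $\sigma$ near the optimizing point, $\tfrac{\Gamma'}{\Gamma}\bigl(\tfrac{\sigma+1}{2}\bigr)$ can be positive, so it must be bounded, not dropped by sign. In short, the strategy is the right one, but as written it proves a weaker inequality and leaves the stated constants --- which are precisely what \cref{cor:Page} and the rest of the paper consume --- unestablished; either carry out the weighted optimization in full with explicit archimedean and $R(\sigma)$ bounds, or do as the paper does and quote McCurley's Theorem 2 together with \cref{lem:Platt}.
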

\begin{proof}
	McCurley \cite[Theorem 2]{McCurley} proved this with $\frac{q'q}{17}$ replaced by $\max\{\frac{q'q}{17},13\}$.  When $\min\{q,q'\}\leq 400\,000$, \cref{lem:Platt} implies that at least one of $L(s,\chi)$ and $L(s,\chi')$ will have no real zero at all.  This means that we may restrict consideration to $q$ and $q'$ such that $\max\{\frac{q'q}{17},13\}=\frac{q'q}{17}$.
\end{proof}

\begin{corollary}[Page]
\label{cor:Page}
Let $Q\geq 3$.  The product
\[
\mathscr{M}(s,Q):=\prod_{q\leq Q}\prod_{\substack{\chi\pmod{q} \\ \textup{$\chi$ primitive} \\ \textup{$\chi$ quadratic}}}L(s,\chi)
\]
has at most one real zero in the interval
\[
s\geq 1-\frac{15 - 10 \sqrt{2}}{2(5 - \sqrt{5})\log Q}=1-\frac{0.155\,1891\ldots}{\log Q}.
\]
If such a real zero, say $\beta_1(Q)$, exists, then $Q> 400\,000$, $\beta_1(Q)$ is a simple zero of $\mathscr{M}(s,Q)$, and there exists $q_1\in(400\,000,Q]$ and a primitive quadratic character $\chi_1\pmod{q_1}$ such that $L(\beta_1(Q),\chi_1)=0$.
\end{corollary}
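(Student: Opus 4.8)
The plan is to reduce \cref{cor:Page} to a statement about real zeros of individual Dirichlet $L$-functions of primitive quadratic characters, and then to treat the contribution of a single modulus and of two distinct moduli separately. Write $c=\frac{15-10\sqrt2}{2(5-\sqrt5)}$ for the constant in the statement; the key numerical fact I will exploit is that $c$ is exactly half the constant $c_L=\frac{15-10\sqrt2}{5-\sqrt5}$ of \cref{lem:Landau}. Put $I_Q=[1-c/\log Q,\,1)$; real zeros $\geq1$ are irrelevant, since $\mathscr{M}(s,Q)\mid\mathscr{L}(s,Q)$ and the latter is zero-free for $s\geq1$ once $Q\geq5$, while the cases $Q\in\{3,4\}$ are immediate from \cref{lem:Platt}.

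First I would observe that any real zero $\beta\in I_Q$ of $\mathscr{M}(s,Q)$ is a real zero of some factor $L(s,\chi)$ with $\chi\pmod q$ primitive quadratic and $q\leq Q$, and that it is a nontrivial zero: for $Q\geq3$ we have $\beta\geq1-c/\log3>0$, whereas the trivial zeros of $L(s,\chi)$ are nonpositive. Then \cref{lem:Platt} forbids such a zero when $q\leq400\,000$, so necessarily $q>400\,000$, whence $Q\geq q>400\,000$ as soon as any zero in $I_Q$ exists; this already produces the modulus $q_1$ and character $\chi_1$ of the final assertion.

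Next I would show $\mathscr{M}(s,Q)$ has at most one real zero in $I_Q$, counted with multiplicity, by assuming it has two, arising from $L(\cdot,\chi)$ at $\beta$ and from $L(\cdot,\chi')$ at $\beta'$, with $\chi\pmod q$ and $\chi'\pmod{q'}$ primitive quadratic and (by the previous step) $400\,000<q,q'\leq Q$. If $\chi\neq\chi'$, then \cref{lem:Landau} gives $\min\{\beta,\beta'\}<1-c_L/\log(q'q/17)$, and since $q'q\leq Q^2$ we have $\log(q'q/17)<2\log Q$, so the right-hand side is $<1-c_L/(2\log Q)=1-c/\log Q$, contradicting $\beta,\beta'\in I_Q$; this is the step forcing the choice $c=c_L/2$. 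If instead $\chi=\chi'$ (so $q=q'$, and the two zeros are real zeros of the one function $L(s,\chi)$, possibly coinciding), then since $q\leq Q$ both lie in $[1-c/\log q,1)$, and I would run the classical single-modulus Landau argument: from \eqref{eqn:Hadamard_real} with $\delta_\chi=0$, the inequality $-\frac{L'}{L}(\sigma,\chi)\geq\frac{\zeta'}{\zeta}(\sigma)$ for real $\chi$ (comparison of Dirichlet coefficients), and the positivity of $\re\frac{1}{\sigma-\rho}$ for real $\sigma>1$, one obtains at $\sigma=1+\lambda/\log q$ the asymptotic $\frac{2}{\lambda+c}\leq\frac12+\frac1\lambda+o(1)$ as $q\to\infty$. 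Because $c=0.155\ldots<6-4\sqrt2=0.343\ldots$, the discriminant $c^2-12c+4$ of $\lambda^2+(c-2)\lambda+2c$ is positive, so one may fix $\lambda>0$ with $\frac{2}{\lambda+c}>\frac12+\frac1\lambda$, a contradiction once $q$ exceeds an explicit threshold; the bounded remaining range of $q$ is dispatched by \cref{lem:Platt} (no real zeros there) or by \cref{lem:ZFR} (which forbids two real zeros of $L(s,\chi)$ in $\re(s)\geq1-\Cr{ZFR}/\log q$, a region containing $I_Q$ as soon as $q\leq Q^{\Cr{ZFR}/c}$).

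Finally, granting the ``at most one'' conclusion, I would let $\beta_1(Q)$ denote this zero when it exists; it is simple, since a zero of $\mathscr{M}(s,Q)$ of multiplicity $\geq2$ — a multiple zero of one factor, or a value shared by two factors — would amount to two zeros in $I_Q$, which has been ruled out. Together with the second paragraph this gives $Q>400\,000$ and the asserted $q_1\in(400\,000,Q]$ carrying a primitive quadratic character $\chi_1$. I expect the genuine work to be the constant bookkeeping in the third paragraph: one must check that the single value $c$ is at once small enough for the single-modulus argument to close ($c<6-4\sqrt2$) and exactly compatible with \cref{lem:Landau} via $q'q\leq Q^2$ ($c\leq c_L/2$), and one must make the ``explicit threshold'' for $q$ in the single-modulus step concrete; everything else follows by assembling \cref{lem:Platt,lem:Landau,lem:ZFR}.
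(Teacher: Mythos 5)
Your treatment of the main case is exactly the paper's: \cref{lem:Platt} forces any relevant modulus above $400\,000$ (hence $Q>400\,000$ and the existence of $q_1,\chi_1$), and for two \emph{distinct} primitive quadratic characters the paper likewise applies \cref{lem:Landau} together with $q'q\le Q^2$, which is precisely your observation that the corollary's constant is half of Landau's. Where you genuinely diverge is the same-character case (two real zeros, or a double real zero, of a single $L(s,\chi)$): the paper settles simplicity with a one-line appeal to \cref{lem:ZFR}, whereas you run the classical single-modulus Landau--Page computation. Your extra care there is not wasted, since the corollary's constant $0.1551\ldots$ exceeds the constant $1/9.645\,908\,801=0.1036\ldots$ of \cref{lem:ZFR}, so the interval $[1-c/\log Q,1)$ sits inside the region of \cref{lem:ZFR} only when $q\le Q^{0.66\ldots}$ --- the restriction you yourself record.

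The one soft spot is how you dispose of the ``bounded remaining range'' in that branch. As written, \cref{lem:Platt} covers $q\le 400\,000$ and your ZFR fallback covers $q\le Q^{0.66\ldots}$, so if your (unspecified) explicit threshold $q_0$ exceeds $400\,000$, moduli with $\max\{400\,000,\,Q^{0.66\ldots}\}<q\le q_0$ --- equivalently, bounded $Q$ with $q$ close to $Q$ --- are covered by nothing you cite. Fortunately the fallback is unnecessary: inserting the explicit bound $-\frac{\zeta'}{\zeta}(\sigma)\le (\sigma-1)^{-1}$ (already used in the proof of \cref{lem:Linnik}) into \eqref{eqn:Hadamard_real}, and noting that $-\frac{1}{2}\log\pi+\frac{1}{2}\frac{\Gamma'}{\Gamma}\big(\frac{\sigma+\mathfrak{a}_{\chi}}{2}\big)<0$ for $1<\sigma\le 2$, your inequality sharpens to $\frac{2}{\lambda+c}\le\frac{1}{2}+\frac{1}{\lambda}$ for \emph{every} $q\ge 3$, with no $o(1)$ at all; taking, say, $\lambda=0.9$ gives $1.89\ldots\le 1.61\ldots$, a contradiction, so the single-modulus case closes with no threshold and no case analysis. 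With that repair your argument is complete and, on this branch, somewhat more self-contained than the paper's citation of \cref{lem:ZFR}.
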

\begin{proof}
It suffices to assume that $\beta_1(Q)$ exists.  By \cref{lem:Platt}, this ensures that $Q>400\,000$.  Suppose to the contrary that there exist two primitive quadratic characters $\chi\pmod{q}$ and $\chi'\pmod{q'}$ such that $q,q'\leq Q$ and $L(\beta_1(Q),\chi)=L(\beta_1(Q),\chi')=0$.  By \cref{lem:Platt}, we have that $\min\{q,q'\}>400\,000$.  It follows that
\[
\beta_1(Q)\geq 1-\frac{15 - 10 \sqrt{2}}{2(5 - \sqrt{5})\log Q}\geq 1-\frac{15 - 10 \sqrt{2}}{(5 - \sqrt{5})\log(q'q)},
\]
contradicting \cref{lem:Landau}.  Therefore, at most one such character exists.  The simplicity of $\beta_1(Q)$ follows from \cref{lem:ZFR}.
\end{proof}

\begin{theorem}
\label{thm:GZFR}
Let $Q\geq 3$.  Let $\mathscr{L}(s,Q)$ and $\beta_1(Q)$ be as in \eqref{eqn:prod_L-functions}, and let $\Cr{ZFR}$ be as in \cref{lem:ZFR}.  Except possibly at $\beta_1(Q)$, $\mathscr{L}(s,Q)$ is nonzero in the region
\[
\re(s)\geq 1-\frac{\Cr{ZFR}}{\log \max\{Q,Q|\im(s)|\}}.
\]
If $\beta_1(Q)\geq 1-\frac{\Cr{ZFR}}{\log Q}$, then $Q> 400\,000$, $\beta_1(Q)$ is a simple zero of $\mathscr{L}(s,Q)$, there exists one primitive Dirichlet character $\chi_1\pmod{q_1}$ with $q_1\in(400\,000,Q]$ such that $L(\beta_1(Q),\chi_1)=0$, and $\beta_1(Q)\leq 1-100/(\sqrt{q_1}(\log q_1)^2)$.
\end{theorem}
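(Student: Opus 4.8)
The plan is to reduce every zero of $\mathscr{L}(s,Q)$ lying in the indicated region to the exceptional zero of a single product $\prod_{\chi\pmod{q}}L(s,\chi)$, and then to invoke \cref{lem:ZFR}, \cref{lem:Platt}, and \cref{cor:Page}. Suppose $\rho=\beta+i\gamma$ is a zero of $\mathscr{L}(s,Q)$ with $\re(\rho)\geq 1-\Cr{ZFR}/\log\max\{Q,Q|\im(\rho)|\}$; then $\rho$ is a zero of $L(s,\psi)$ for some primitive character $\psi\pmod{q_\psi}$ with $q_\psi\leq Q$. The possibility $q_\psi=1$ (a zero of $\zeta$) is excluded, since $\zeta(s)$ is a factor of $\prod_{\chi\pmod{3}}L(s,\chi)$ through the principal character modulo $3$ and, because $Q\geq 3$, the region above is contained in the region of \cref{lem:ZFR} taken with $q=3$, which contains no zeros as $3\leq 400\,000$; there is no primitive character modulo $2$, so $q_\psi\geq 3$. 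Because $q_\psi\leq Q$ one has $\log\max\{q_\psi,q_\psi|\gamma|\}\leq\log\max\{Q,Q|\gamma|\}$, so $\rho$ lies in the region of \cref{lem:ZFR} for the modulus $q_\psi$; since $L(s,\psi)$ is a factor of $\prod_{\chi\pmod{q_\psi}}L(s,\chi)$, \cref{lem:ZFR} forces $\rho$ to be the exceptional zero for that modulus. Consequently $\rho$ is real, it is a simple zero of $\prod_{\chi\pmod{q_\psi}}L(s,\chi)$ (hence a simple zero of $L(s,\psi)$, with $L(\rho,\chi)\neq 0$ for every other $\chi\pmod{q_\psi}$), $\psi$ is quadratic, and $q_\psi>400\,000$.

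It remains to rule out two such zeros and to identify the zero with $\beta_1(Q)$. If $\beta$ is such a zero, then $\beta\geq 1-\Cr{ZFR}/\log Q > 1-\frac{15-10\sqrt{2}}{2(5-\sqrt{5})\log Q}$, because $\Cr{ZFR}=\frac{1}{9.645\,908\,801}<0.155\,1891\ldots=\frac{15-10\sqrt{2}}{2(5-\sqrt{5})}$. As $\beta$ is a real zero of a primitive quadratic $L$-function of conductor at most $Q$, it is a real zero of $\mathscr{M}(s,Q)$ in the range considered in \cref{cor:Page}; by that corollary there is at most one such real zero, it is simple as a zero of $\mathscr{M}(s,Q)$, and it comes from a unique primitive quadratic character $\chi_1\pmod{q_1}$ with $q_1\in(400\,000,Q]$. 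Hence $\mathscr{L}(s,Q)$ has at most one zero in the region; if it exists it is real and therefore at most $\beta_1(Q)$, and if $\beta_1(Q)\geq 1-\Cr{ZFR}/\log Q$ then $\beta_1(Q)$ lies in the region and so equals this unique zero. This yields the first assertion, and, when $\beta_1(Q)\geq 1-\Cr{ZFR}/\log Q$, it shows (combining the first paragraph with this uniqueness) that $\beta_1(Q)$ is a simple zero of $\mathscr{L}(s,Q)$, that $Q\geq q_1>400\,000$, and that $L(\beta_1(Q),\chi_1)=0$ with $q_1\in(400\,000,Q]$.

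For the remaining inequality, $\chi_1$ is a primitive real character of conductor $q_1>400\,000$ with $L(\beta_1(Q),\chi_1)=0$. I would combine the classical effective lower bound $L(1,\chi_1)\gg q_1^{-1/2}$, which follows from Dirichlet's class number formula together with the bound $h\geq 1$ for the relevant class number, with the estimate $|L'(\sigma,\chi_1)|\ll(\log q_1)^2$ valid for $\sigma\in[\beta_1(Q),1]$, obtained by truncating the Dirichlet series for $L'(s,\chi_1)$ at height $q_1$ and estimating the tail via the P\'olya--Vinogradov inequality (using that $1-\beta_1(Q)$ is small, so $\sigma$ is close to $1$). Since $L(1,\chi_1)=\int_{\beta_1(Q)}^{1}L'(\sigma,\chi_1)\,d\sigma$, this gives $1-\beta_1(Q)\gg q_1^{-1/2}(\log q_1)^{-2}$, and a careful treatment of the constants---or an appeal to an explicit effective bound from the literature---yields $\beta_1(Q)\leq 1-100/(\sqrt{q_1}(\log q_1)^2)$.

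I expect the main obstacle to be this last step: pinning down the constant $100$ requires fully effective forms of both the lower bound for $L(1,\chi_1)$ and the near-$1$ upper bound for $L'(\sigma,\chi_1)$, whereas the first two paragraphs are essentially bookkeeping around \cref{lem:ZFR}, \cref{lem:Platt}, and \cref{cor:Page}.
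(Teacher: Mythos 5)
The non-quantitative part of your argument is correct and is exactly what the paper intends: apart from the final inequality, \cref{thm:GZFR} is deduced from \cref{lem:ZFR} and \cref{cor:Page} (with \cref{lem:Platt} disposing of small conductors), and your bookkeeping---pushing any zero in the region into the zero-free region of \cref{lem:ZFR} for its own modulus, using the resulting quadraticity and reality to pass to $\mathscr{M}(s,Q)$, and then invoking the uniqueness and simplicity in \cref{cor:Page} to identify the zero with $\beta_1(Q)$---is precisely the argument the paper leaves implicit.

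The genuine gap is the bound $\beta_1(Q)\leq 1-100/(\sqrt{q_1}(\log q_1)^2)$. The paper does not derive this; it cites the explicit exceptional-zero bounds of Bordignon \cite{Bordignon,Bordignon2}, and that input is a substantial result in its own right rather than a constant-chasing exercise. Your proposed self-contained route cannot reach the constant $100$: in the worst case ($\chi_1$ odd with class number one) the class number formula gives only $L(1,\chi_1)\geq \pi/\sqrt{q_1}$, while the truncation-plus-P\'olya--Vinogradov bound for $L'$ is dominated by the initial segment $\sum_{n\leq N}(\log n)/n\approx\tfrac12(\log N)^2$ with $N$ of size about $\sqrt{q_1}\log q_1$ (needed to control the tail), so $\max_{\sigma\in[\beta_1(Q),1]}|L'(\sigma,\chi_1)|$ cannot be pushed below roughly $\tfrac18(\log q_1)^2$ by this method. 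Feeding these into $L(1,\chi_1)\leq(1-\beta_1(Q))\max|L'|$ yields a constant of at most about $8\pi\approx 25$ asymptotically, and noticeably less for $q_1$ near $4\times 10^5$, i.e.\ a factor of roughly $4$ short of $100$. Closing that factor is the content of Bordignon's two papers (sharper lower bounds for $L(1,\chi_1)$ and refined estimates near $s=1$), so your phrase ``a careful treatment of the constants'' understates what is required; the fallback you mention---appealing to an explicit bound from the literature---is in fact the paper's proof of this step.
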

\begin{proof}
The bound on $\beta_1(Q)$ is proved in \cite{Bordignon,Bordignon2}.  \cref{lem:ZFR} and \cref{cor:Page} imply the rest.
\end{proof}

\subsection{Counting zeros}

We use the following estimate for the $\Gamma$-function.

\begin{lemma}
\label{lem:ono_sound}
If $\re(z)\geq\frac{1}{2}$, then
\[
\re\Big(\frac{\Gamma'}{\Gamma}\Big(\frac{z}{2}\Big)\Big)\leq \log|z|-\gamma_{\Q},\qquad \gamma_{\Q}:=\lim_{N\to\infty}\Big(\sum_{n=1}^{N}\frac{1}{n}-\log N\Big)=0.5772156649\ldots
\]
\end{lemma}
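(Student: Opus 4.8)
The plan is to reduce the bound, via the Mittag--Leffler expansion $\psi(w):=\tfrac{\Gamma'}{\Gamma}(w)=-\gamma_\Q+\sum_{n\geq0}\big(\tfrac1{n+1}-\tfrac1{n+w}\big)$, to a sum--integral comparison and then to a one-variable inequality. Set $w=z/2$, so $\sigma:=\re(w)\geq\tfrac14$; write $w=\sigma+i\tau$ and, since $\psi(\overline w)=\overline{\psi(w)}$ and $|z|=2\sqrt{\sigma^2+\tau^2}$, assume $\tau\geq0$.

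First I would take real parts of the expansion --- using $\re\tfrac1{n+w}=\tfrac{n+\sigma}{(n+\sigma)^2+\tau^2}$ --- and regroup by means of $\tfrac1{n+\sigma}-\tfrac{n+\sigma}{(n+\sigma)^2+\tau^2}=\tfrac{\tau^2}{(n+\sigma)((n+\sigma)^2+\tau^2)}=:f(n)$ and $\psi(\sigma)=-\gamma_\Q+\sum_{n\geq0}\big(\tfrac1{n+1}-\tfrac1{n+\sigma}\big)$, obtaining the exact identity $\re\psi(w)=\psi(\sigma)+\sum_{n\geq0}f(n)$, where $f(x):=\tfrac{\tau^2}{(x+\sigma)((x+\sigma)^2+\tau^2)}$. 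A short computation (put $u=x+\sigma$; the inequality $f''\geq0$ reduces to $(u^2+\tau^2)^3\geq u^4(u^2-3\tau^2)$, i.e.\ to $6u^4\tau^2+3u^2\tau^4+\tau^6\geq0$) shows that $f$ is convex on $(-\sigma,\infty)$, hence on every interval $[n-\tfrac12,n+\tfrac12]$ with $n\geq1$. The midpoint form of the Hermite--Hadamard inequality then gives $f(n)\leq\int_{n-1/2}^{n+1/2}f$, whence $\sum_{n\geq1}f(n)\leq\int_{1/2}^{\infty}f(x)\,dx=\tfrac12\log\tfrac{(\sigma+1/2)^2+\tau^2}{(\sigma+1/2)^2}$ (partial fractions in $u$), and with $f(0)=\tfrac{\tau^2}{\sigma(\sigma^2+\tau^2)}$ this yields
\[
\re\psi(w)\ \leq\ \psi(\sigma)+\frac{\tau^2}{\sigma(\sigma^2+\tau^2)}+\frac12\log\frac{(\sigma+1/2)^2+\tau^2}{(\sigma+1/2)^2}.
\]

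It then remains to check that the right-hand side is at most $\log|z|-\gamma_\Q=\log2+\tfrac12\log(\sigma^2+\tau^2)-\gamma_\Q$ for all $\sigma\geq\tfrac14$ and $\tau\geq0$. Writing $t=\tau^2$ and letting $G(\sigma,t)$ denote this difference, I would compute $\partial_tG=\tfrac1{2(\sigma^2+t)}-\tfrac1{2((\sigma+1/2)^2+t)}-\tfrac{\sigma}{(\sigma^2+t)^2}$ and observe, after clearing denominators, that $\partial_tG=0$ is \emph{linear} in $t$ with unique root $t=\sigma\big(\sigma^2+\tfrac74\sigma+\tfrac12\big)/\big(\tfrac14-\sigma\big)$, which is negative for $\sigma\geq\tfrac14$, while $\partial_tG(\sigma,0)=\big(-\sigma^2-\tfrac74\sigma-\tfrac12\big)/\big(2\sigma^3(\sigma+\tfrac12)^2\big)<0$; hence $G(\sigma,\cdot)$ is strictly decreasing and it suffices to show $\lim_{t\to\infty}G(\sigma,t)\geq0$, i.e.\ $\psi(\sigma+1)=\psi(\sigma)+\tfrac1\sigma\leq\log(\sigma+\tfrac12)+\log2-\gamma_\Q$. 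Finally, $h(\sigma):=\psi(\sigma+1)-\log(\sigma+\tfrac12)$ is non-increasing on $[\tfrac14,\infty)$ since $h'(\sigma)=\psi'(\sigma+1)-\tfrac1{\sigma+1/2}\leq0$ --- bounding $\psi'(\sigma+1)=\sum_{n\geq0}(n+\sigma+1)^{-2}\leq\int_{\sigma+1/2}^{\infty}x^{-2}\,dx$ by convexity of $x^{-2}$ --- so $h(\sigma)\leq h(\tfrac14)=\psi(\tfrac54)-\log\tfrac34=4-\gamma_\Q-\tfrac\pi2-\log6$, using $\psi(\tfrac54)=\psi(\tfrac14)+4$ and the special value $\psi(\tfrac14)=-\gamma_\Q-\tfrac\pi2-3\log2$; and $4-\gamma_\Q-\tfrac\pi2-\log6\leq\log2-\gamma_\Q$ amounts to $\tfrac\pi2+\log12>4$, which holds.

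The step I expect to be the main obstacle is making the sum--integral comparison sharp enough: the crude estimate $\sum_{n\geq1}f(n)\leq\int_0^{\infty}f$ overshoots by roughly one unit when $\sigma$ is near $\tfrac14$ and would make the target inequality false, so one genuinely needs the convexity of $f$ in order to replace $\int_0^\infty$ by $\int_{1/2}^\infty$. Once that is in place, the sign analysis of $\partial_tG$ and the monotonicity of $h$ collapse the two-variable estimate to the single elementary fact $\tfrac\pi2+\log12>4$.
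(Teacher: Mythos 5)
Your proposal is correct, and it takes a genuinely different route from the paper: the paper's proof of this lemma is essentially a citation (it quotes the bound $\re\bigl(\tfrac{\Gamma_{\R}'}{\Gamma_{\R}}(z)\bigr)\leq -\tfrac{\log\pi+\gamma_{\Q}}{2}+\tfrac{\log|z|}{2}$ for $\re(z)\geq\tfrac12$ from the reference \cite{HoeySatoTate} and converts it via $\tfrac{\Gamma_{\R}'}{\Gamma_{\R}}(z)=\tfrac12\tfrac{\Gamma'}{\Gamma}(\tfrac z2)-\tfrac{\log\pi}{2}$), whereas you give a self-contained elementary argument from the partial-fraction expansion of the digamma function. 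I checked your computations: the identity $\re\psi(w)=\psi(\sigma)+\sum_{n\geq0}f(n)$ is exact, the convexity of $f$ (numerator $6u^4\tau^2+3u^2\tau^4+\tau^6$) is right, the midpoint Hermite--Hadamard step gives $\sum_{n\geq1}f(n)\leq\tfrac12\log\tfrac{(\sigma+1/2)^2+\tau^2}{(\sigma+1/2)^2}$, the cleared form of $\partial_tG$ is indeed $(\tfrac14-\sigma)t-\sigma(\sigma^2+\tfrac74\sigma+\tfrac12)$ up to the positive factor $2(\sigma^2+t)^2((\sigma+\tfrac12)^2+t)$, so $G(\sigma,\cdot)$ is decreasing (note that at $\sigma=\tfrac14$ there is no root at all rather than a negative one, but the sign conclusion is unaffected), and the limit reduces via $\psi(\sigma)+\tfrac1\sigma=\psi(\sigma+1)$ and $\psi(\tfrac14)=-\gamma_{\Q}-\tfrac\pi2-3\log2$ to $\tfrac\pi2+\log12>4$, which holds with margin about $0.0557$. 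What your approach buys is transparency and independence from \cite{HoeySatoTate}, at the cost of a longer argument that is numerically tight near $z=\tfrac12$ (the slack is only about $0.056$, consistent with the fact that the stated constant $-\gamma_{\Q}$ is close to optimal there); the paper's approach buys brevity by outsourcing exactly this kind of careful estimate. Your observation that the cruder comparison $\sum_{n\geq1}f(n)\leq\int_0^\infty f$ would not suffice is also correct, so the convexity input is genuinely needed in your route.
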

\begin{proof}
In \cite[Lemma 4.1]{HoeySatoTate}, it is proved that if $\Gamma_{\R}(z)=\pi^{-z/2}\Gamma(\frac{z}{2})$ and $\re(z)\geq\frac{1}{2}$, then
\[
\re\Big(\frac{\Gamma_{\R}'}{\Gamma_{\R}}(z)\Big)\leq -\frac{\log\pi+\gamma_{\Q}}{2}+\frac{\log|z|}{2}.
\]
Since $\frac{\Gamma_{\R}'}{\Gamma_{\R}}(z)=\frac{1}{2}\frac{\Gamma'}{\Gamma}(\frac{z}{2})-\frac{\log\pi}{2}$, the lemma follows.
\end{proof}

Given $\sigma\geq 0$, $T\geq 0$, $Q\geq 1$, $1\leq q\leq Q$, and a primitive Dirichlet character $\chi\pmod{q}$, we define
\begin{equation}
\label{eqn:def_N_chi(sigma,T)}
N_{\chi}(\sigma,T):=\#\{\rho=\beta+i\gamma\colon \beta\geq \sigma,~|\gamma|\leq T,~L(\rho,\chi)=0\}.
\end{equation}
It follows that $N(\sigma,Q)$ in \eqref{eqn:NchiDef} can be written as
\begin{equation}
\label{eqn:def_N(sigma,Q)}
N(\sigma,Q)=\sum_{q\leq Q}~\sum_{\substack{\chi\pmod{q} \\ \textup{$\chi$ primitive}}}N_{\chi}(\sigma,Q).
\end{equation}

\begin{lemma}
\label{lem:basic_density}
If $Q\geq 10^4$, then $N(0,Q)\leq 0.64 Q^3\log Q$.
\end{lemma}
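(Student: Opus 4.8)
The plan is to bound $N(0,Q)$ by summing the classical Riemann–von Mangoldt-type zero-counting estimate over all moduli $q\le Q$ and all primitive characters $\chi\pmod q$, using the decomposition \eqref{eqn:def_N(sigma,Q)}. For a fixed primitive $\chi\pmod q$, one counts the nontrivial zeros $\rho=\beta+i\gamma$ of $L(s,\chi)$ with $|\gamma|\le T$. The standard approach is an argument-principle/Jensen's-formula estimate applied to the completed function $\Lambda(s,\chi)$, or more simply the Littlewood-style estimate: the number of zeros with $|\gamma|\le T$ is
\[
N_\chi(0,T)\ll T\log\frac{q(T+2)}{2\pi}+(\text{lower order}),
\]
where the main term comes from the change in the argument of the Gamma factor $(q/\pi)^{(s+\ka)/2}\Gamma(\frac{s+\ka}{2})$ across the critical strip, controlled via \cref{lem:ono_sound}, and the contribution of $L(s,\chi)$ itself is handled by a convexity/Jensen bound using that $\log L(s,\chi)$ is small for $\re(s)$ slightly larger than $1$. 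One then takes $T=Q$.

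The key arithmetic step is then to sum over $\chi$ and $q$. The number of primitive characters modulo $q$ is at most $\varphi(q)\le q$, and $\sum_{q\le Q}q\le Q^2$ (more precisely $\le \frac12 Q(Q+1)$), so after plugging in $T=Q$ one gets a total main term of size roughly $\sum_{q\le Q}\varphi(q)\cdot Q\log(qQ)\ll Q^2\cdot Q\log Q = Q^3\log Q$, with an explicitly computable constant. The factor $0.64$ is obtained by tracking all constants: the $\frac12$ from $\sum q$, the $\log\frac{qQ}{2\pi}\le 2\log Q$ type bounds, the $\gamma_\Q$ from \cref{lem:ono_sound}, and the (negligible once $Q\ge 10^4$) lower-order terms, then verifying numerically with the stated Mathematica computations that the combined constant does not exceed $0.64$ in the range $Q\ge 10^4$.

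First I would fix $\chi$ primitive mod $q$ and write down an explicit bound for $N_\chi(0,Q)$; the cleanest route uses \eqref{eqn:Hadamard_real}: integrating $\re\frac{L'}{L}$ against a suitable kernel, or more directly using that $\sum_\rho \frac{1}{1+(\gamma-t)^2}$ is bounded by a constant times $\log(q(|t|+2))$ uniformly in $t$ (a consequence of \eqref{eqn:Hadamard_real} evaluated at $s=2+it$ together with \cref{lem:ono_sound}), and then covering $|\gamma|\le Q$ by $O(Q)$ unit intervals. This gives $N_\chi(0,Q)\le c_1 Q\log(qeQ)$ for an explicit $c_1$. Second I would sum over primitive $\chi\pmod q$ (at most $\varphi(q)\le q-1$ of them) and over $q\le Q$, bounding $\sum_{q\le Q}q\log(qeQ)\le Q^2\log(eQ^2)\le 3Q^2\log Q$ for $Q\ge 10^4$. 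Third I would combine the constants and check that the product is $\le 0.64$ once $Q\ge 10^4$, discarding the $\delta_\chi$ pole terms (only $\chi$ trivial, i.e. $\zeta$, a single bounded contribution) and any lower-order $O(\log q)$ remainders, which are dwarfed by $Q^3\log Q$ in this range.

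The main obstacle is purely quantitative: keeping the constant $c_1$ small enough. A naive Jensen-on-a-disc argument loses a factor of $2$ or more; to land at $0.64$ one likely needs the sharper $\sum_\rho \frac{1}{1+\gamma^2}$-type bound coming directly from the Hadamard formula \eqref{eqn:Hadamard_real} at $s=2$, where $\re\frac{L'}{L}(2,\chi)=O(1)$ explicitly (bounded in terms of $\zeta(2)$), combined with the sharp $\log|z|-\gamma_\Q$ bound of \cref{lem:ono_sound} rather than a cruder $\log(|z|+C)$. I expect the write-up to reduce to: (i) the zero-counting-in-a-strip inequality with an explicit constant, proved as above; (ii) the elementary sum $\sum_{q\le Q}\varphi(q)\le \frac12 Q^2$; and (iii) a short numerical verification, valid for all $Q\ge 10^4$, that ties the pieces together with final constant $0.64$.
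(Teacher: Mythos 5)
There is a genuine quantitative gap in your operative plan. The paper's proof is a two-line citation: it invokes the explicit Riemann--von Mangoldt formula $\big|N_{\chi}(0,T)-\frac{T}{\pi}\log\frac{qT}{2\pi e}\big|\le 0.247\log(qT)+6.894$ (Bennett--Martin--O'Bryant, Corollary 1.2) for nontrivial primitive $\chi$ and Rosser's Theorem 19 for $\zeta$, uses \cref{lem:Platt} to dispose of $N_1(0,T)$ for $T\le 14$ (Rosser's formula requires $T>14$), and then sums via \eqref{eqn:def_N(sigma,Q)}. Your first paragraph gestures at this, but the concrete three-step argument you commit to is different: bound $n_\chi$ in unit windows by evaluating \eqref{eqn:Hadamard_real} at $s=2+it$ (the $\sum_\rho\frac{1}{1+(\gamma-t)^2}\ll\log(q(|t|+2))$ device) and cover $|\gamma|\le Q$ by $O(Q)$ unit intervals. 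That method cannot reach the constant $0.64$. At $s=2+it$ each zero with $|\gamma-t|\le\frac12$ contributes at most about $0.47$ to $\sum_\rho\re\frac{1}{2+it-\rho}\le\frac12\log(q(|t|+2))+O(1)$, so the covering argument yields at best roughly $1.06\log(qT)$ zeros per unit window, whereas the true density at height $\approx T$ is $\frac{1}{2\pi}\log\frac{qT}{2\pi}\approx 0.16\log(qT)$: you lose a factor of at least $6$ per character. Even with the sharp count of primitive characters, $\sum_{q\le Q}\varphi^*(q)\approx\frac{18}{\pi^4}Q^2$, this gives a total of order $0.8\,Q^3\log Q$ or more, and with your stated crude bounds ($\sum_{q\le Q}q\le\frac12 Q(Q+1)$ and $\log(qeQ)\lesssim 2\log Q$) it balloons to roughly $2$--$4\,Q^3\log Q$. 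So the "main obstacle" you flag is not merely a matter of tightening constants: no Hadamard-at-$s=\sigma_0+it$ covering argument recovers the $\frac{1}{\pi}\log\frac{qT}{2\pi e}$ density, because that density comes from the change of argument of the $\Gamma$- and conductor factors in the argument principle, not from positivity of $\re\frac{1}{s-\rho}$.

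To repair the proposal you would either have to prove an explicit Riemann--von Mangoldt formula for $L(s,\chi)$ from scratch (argument principle for $\Lambda(s,\chi)$ with a Backlund-type bound on the argument term $S(T,\chi)$ --- a nontrivial undertaking, not a routine step), or simply cite the existing explicit results as the paper does. Your summation step is then fine: with the cited formula, even the crude bound $\sum_{q\le Q}\varphi^*(q)\le\frac12 Q^2$ and $\log\frac{qQ}{2\pi e}\le 2\log Q$ give a main term at most $\frac{1}{\pi}Q^3\log Q\approx 0.32\,Q^3\log Q$, with the error terms of size $O(Q^2\log Q)$ easily absorbed for $Q\ge 10^4$, which is comfortably within $0.64\,Q^3\log Q$. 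Also note the small technical point your write-up glosses over: for the trivial character the cited formula for $\zeta$ only holds for $T>14$, and the paper closes this by observing $N_1(0,T)=0$ for $0\le T\le 14$ via \cref{lem:Platt}.
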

\begin{proof}
If $\chi\pmod{q}$ is primitive, then by \cite[Corollary 1.2]{BennettMartinOBryant2} and  \cite[Theorem 19]{Rosser}, the quantity $N_{\chi}(\sigma,T)$ in \eqref{eqn:def_N_chi(sigma,T)} satisfies
\[
\Big|N_{\chi}(0,T)-\frac{T}{\pi}\log\frac{qT}{2\pi e}\Big|\leq\begin{cases}
\min\{0.247\log(qT)+6.894,0.298\log(qT)+4.358\}&\mbox{if $\chi\neq 1$ and $T\geq \frac{5}{7}$,}\\
0.137\log T+0.443\log\log T+2.463&\mbox{if $\chi=1$ and $T>14$.}
\end{cases}
\]
By \cref{lem:Platt}, if $0\leq T\leq 14$, then $N_1(0,T)=0$.  By \eqref{eqn:def_N(sigma,Q)}, these estimates imply the lemma.
\end{proof}

Our proof of \cref{thm:GLFZDE} relies crucially on our ability to efficiently count zeros in circles of small radius that are centered near the line $\re(s)=1$.

\begin{lemma}
	\label{lem:Linnik}
	Let $\sigma\geq 1$, $0<r\leq 1$, $q\geq 1$, $T\geq 1$, $t\in\R$, $|t|\leq T$, and $\chi\pmod{q}$ be a primitive Dirichlet character.  Define
	\begin{equation}
	\label{eqn:nchi_def}
	n_{\chi}(r,\sigma+it):=\#\{\rho=\beta+i\gamma\colon 0<\beta<1,~L(\rho,\chi)=0,~|\sigma+it-\rho|\leq r\}.
	\end{equation}
The bound $n_{\chi}(r,\sigma+it)\leq n_{\chi}(r,1+it)\leq n_{\chi}(2r,1+r+it)\leq r(2\log(qT)-1)+4$ holds.
\end{lemma}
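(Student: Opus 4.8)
The plan is to prove the three inequalities in turn; the first two are elementary distance comparisons, while the third carries the analytic content. For the first inequality, let $\rho = \beta + i\gamma$ be a nontrivial zero of $L(s,\chi)$, so $0 < \beta < 1$. If $|\sigma + it - \rho| \le r$, then since $\sigma \ge 1 > \beta$ we have $0 < 1 - \beta \le \sigma - \beta$, hence
\[
|1 + it - \rho|^2 = (1-\beta)^2 + (t-\gamma)^2 \le (\sigma-\beta)^2 + (t-\gamma)^2 = |\sigma+it-\rho|^2 \le r^2,
\]
so $\rho$ is also counted by $n_\chi(r,1+it)$. For the second inequality, if $|1+it-\rho| \le r$ then $|1+r+it-\rho| \le |1+r+it-(1+it)| + |1+it-\rho| \le 2r$, so $\rho$ is counted by $n_\chi(2r,1+r+it)$. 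Thus every zero contributing on the left contributes on the right in each case.

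For the third inequality I would evaluate the partial fraction decomposition \eqref{eqn:Hadamard_real} at $s_0 := 1 + r + it$; since $\re(s_0) = 1 + r > 1$, this point is neither a zero nor a pole of $L(s,\chi)$. Every nontrivial zero satisfies $\beta < 1 < \re(s_0)$, so $\re\frac{1}{s_0-\rho} = \frac{1+r-\beta}{|s_0-\rho|^2} > 0$, and if $\rho$ is one of the zeros counted by $n_\chi(2r,1+r+it)$ then $1+r-\beta > r$ while $|s_0-\rho|^2 \le 4r^2$, so that term is at least $\frac{1}{4r}$. Discarding the remaining nonnegative zero terms and rearranging \eqref{eqn:Hadamard_real},
\[
\frac{n_\chi(2r,1+r+it)}{4r} \le \re\frac{L'}{L}(s_0,\chi) + \delta_\chi \re\Big(\frac{1}{s_0-1} + \frac{1}{s_0}\Big) + \frac12\log\frac{q}{\pi} + \frac12\re\frac{\Gamma'}{\Gamma}\Big(\frac{s_0+\mathfrak{a}_\chi}{2}\Big).
\]
From the Dirichlet series for $-L'/L$ (bounding $\re(\chi(n)n^{-it})$ below by $-1$) one has $\re\frac{L'}{L}(s_0,\chi) \le \sum_n \Lambda(n)n^{-1-r} = -\frac{\zeta'}{\zeta}(1+r) \le \frac{1}{r}$, the last step being the standard bound $-\frac{\zeta'}{\zeta}(\sigma) < \frac{1}{\sigma-1}$ for $\sigma > 1$. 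Since $\re(s_0 + \mathfrak{a}_\chi) \ge 1 > \tfrac12$, \cref{lem:ono_sound} gives $\re\frac{\Gamma'}{\Gamma}(\frac{s_0+\mathfrak{a}_\chi}{2}) \le \log|s_0+\mathfrak{a}_\chi| - \gamma_\Q$, and $|s_0+\mathfrak{a}_\chi| \le \sqrt{9+T^2} \le \sqrt{10}\,T$ using $0 < r \le 1$, $\mathfrak{a}_\chi \in \{0,1\}$, $|t| \le T$, and $T \ge 1$. For nontrivial $\chi$ we have $\delta_\chi = 0$, so the right side is at most $\frac{1}{r} + \frac12\log(qT) + c_0$ with $c_0 := \frac14\log 10 - \frac12\log\pi - \frac12\gamma_\Q < -\tfrac14$; multiplying by $4r$ gives $n_\chi(2r,1+r+it) \le 4 + 2r\log(qT) + 4c_0 r \le r(2\log(qT)-1) + 4$.

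For the trivial character ($q = 1$, $\delta_\chi = 1$) the pole term $\re(\frac{1}{s_0-1} + \frac{1}{s_0})$ must be handled: I would use that $\zeta(s)$ has no zeros with $|\im(s)| \le 14$, so since $2r \le 2$ the count $n_\chi(2r,1+r+it)$ vanishes unless $|t| \ge 12$; when it is nonzero we therefore have $|t| \ge 12$, whence $\re\frac{1}{s_0-1} = \frac{r}{r^2+t^2}$ and $\re\frac{1}{s_0} = \frac{1+r}{(1+r)^2+t^2}$ are each at most $\tfrac{1}{74}$ and may be absorbed into the constant, while $|s_0| = \sqrt{(1+r)^2+t^2} \le \sqrt{4+T^2} \le \tfrac{\sqrt{37}}{6}T$ in this range; the same computation then gives the claimed bound with room to spare.

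The step I expect to be most delicate is the constant bookkeeping rather than any single estimate: to land exactly on the coefficients $2$ and $-1$ in $r(2\log(qT)-1)$ and on the additive $4$, one needs the sharp form of \cref{lem:ono_sound} (with the term $-\gamma_\Q$), the sharp inequality $-\frac{\zeta'}{\zeta}(\sigma) < \frac{1}{\sigma-1}$ in place of a Chebyshev-type substitute, and — for $\zeta$ in particular — the realization that its pole at $s = 1$ would otherwise contribute a further term of size $\asymp \frac1r$, which is exactly why the location of the lowest zero of $\zeta$ must be invoked to keep the additive constant at $4$ rather than roughly $8$.
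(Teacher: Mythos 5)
Your proposal is correct and follows essentially the same route as the paper: the first two inequalities by elementary distance comparisons, then the bound $\re\frac{1}{1+r+it-\rho}\geq\frac{1}{4r}$ for each counted zero combined with \eqref{eqn:Hadamard_real}, the estimate $\re\frac{L'}{L}(1+r+it,\chi)\leq-\frac{\zeta'}{\zeta}(1+r)\leq\frac1r$, \cref{lem:ono_sound} for the $\Gamma$-term, and the absence of low-lying zeros of $\zeta$ to tame the pole terms when $\chi=1$. The only (immaterial) difference is your threshold $|t|\geq 12$ and pole bound $\frac{1}{74}$ versus the paper's $|t|\geq 13.1$ and $0.0172$; your constant bookkeeping still lands within $r(2\log(qT)-1)+4$.
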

\begin{proof}
	If $t\in\R$, $\sigma>1$, and $0<r\leq 1$, then it follows from the geometry of complex numbers that
	\begin{align*}
	n_{\chi}(r,\sigma+it)\leq n_{\chi}(r,1+it)\leq n_{\chi}(2r,1+r+it)&\leq 4r\sum_{|1+r+it-\rho|\leq 2r}\re\Big(\frac{1}{1+r+it-\rho}\Big)\\
	&\leq 4r\sum_{\rho}\re\Big(\frac{1}{1+r+it-\rho}\Big).
	\end{align*}
	By \eqref{eqn:Hadamard_real}, the last line equals
	\[
	4r\Big(\re\frac{L'}{L}(1+r+it,\chi)+\frac{1}{2}\log \frac{q}{\pi}+\frac{1}{2}\re\frac{\Gamma'}{\Gamma}\Big(\frac{1+r+\mathfrak{a}_{\chi}+it}{2}\Big)+\delta_{\chi}\re\Big(\frac{1}{1+r+it}+\frac{1}{r+it}\Big)\Big).
	\]
	Note that
	\[
	\re\Big(\frac{L'}{L}(1+r+it,\chi)\Big)\leq-\frac{\zeta'}{\zeta}(1+r)\leq\frac{1}{r},
	\]
	and \cref{lem:ono_sound} implies that (since $|t|\leq T$ and $T\geq 1$)
	\[
	\frac{1}{2}\re\frac{\Gamma'}{\Gamma}\Big(\frac{1+r+\mathfrak{a}_{\chi}+it}{2}\Big)\leq \frac{\frac{1}{2}\log((1+r+\mathfrak{a}_{\chi})^2+t^2)-\gamma_{\Q}}{2} \leq \frac{\log T+\frac{1}{2}\log 10-\gamma_{\Q}}{2}.
	\]
	Finally, if $\chi$ is trivial, then $L(s,\chi)=\zeta(s)$ does not vanish when $0\leq\re(s)\leq 1$ and $|\im(s)|\leq 14.13$.  Therefore, it suffices to assume that $|t|\geq 13.1$, in which case we have that
	\[
	\delta_{\chi}\re\Big(\frac{1}{1+r+it}+\frac{1}{r+it}\Big)\leq 0.017\,182\,3\delta_{\chi}.
	\]
	The lemma follows once we collect the estimates above.
\end{proof}

Following Heath-Brown \cite{HBLinnik}, we will substantially improve \cref{lem:Linnik} in a certain range of $r$; the quality of our improvement depends on how well one can explicitly bound $L(s,\chi)$ on the line $\re(s)=\frac{1}{2}$ in terms of $q$ and $|\im(s)|$.  We give a short and self-contained proof of an explicit version of the convexity bound $L(\frac{1}{2}+it,\chi)\ll (q(|t|+1))^{1/4}$ (without the usual $+\epsilon$ in the exponent).

\begin{proposition}
	\label{prop:sharp_convexity}
If $\chi\pmod{q_{\chi}}$ is primitive and $t\in\R$, then $|L(\tfrac{1}{2}+it,\chi)|\leq 2.97655(q_{\chi}|1+it|)^{\frac{1}{4}}$.
\end{proposition}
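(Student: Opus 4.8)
The plan is to derive an explicit \emph{smoothed approximate functional equation} for $L(s,\chi)$ by a weighted Perron integral followed by a single contour shift, writing $L(\tfrac12+it,\chi)$ as two Dirichlet polynomials of length roughly $\sqrt{q_\chi|1+it|}$ plus an explicitly controlled error. Each polynomial is then bounded by $\sum_{n\leq c\sqrt{q_\chi|1+it|}}n^{-1/2}\ll(q_\chi|1+it|)^{1/4}$, and the constant $2.97655$ emerges from optimizing the auxiliary weight, the truncation parameters, and the bookkeeping. Since $L(s,\chi)$ is entire when $\chi$ is non-principal, I would treat that case first and handle the Riemann zeta case ($q_\chi=1$, $L(s,\chi)=\zeta(s)$) at the very end.

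Fix $t\in\R$, write $q=q_\chi$, $\mathfrak{a}=\mathfrak{a}_\chi\in\{0,1\}$, and fix an even entire function $G$ with $G(0)=1$ that decays super-polynomially in vertical strips at an explicit rate (for instance a Gaussian $G(w)=e^{w^2}$, or a variant chosen to make the final constant small). For a parameter $X>0$ to be chosen, put
\[
J:=\frac{1}{2\pi i}\int_{(\kappa)}L\big(\tfrac12+it+w,\chi\big)\,G(w)\,X^{w}\,\frac{dw}{w},\qquad\kappa:=\tfrac12+\delta,
\]
with $\delta>0$ small and fixed. On the line $\re w=\kappa$ one has $\re(\tfrac12+it+w)=1+\delta>1$, so expanding the Dirichlet series of $L$ and integrating term by term gives $J=\sum_{n\geq1}\chi(n)n^{-1/2-it}\,V(n/X)$, where $V(y):=\frac{1}{2\pi i}\int_{(\kappa)}G(w)y^{-w}\,\tfrac{dw}{w}$ tends to $1$ rapidly as $y\to0^{+}$ and decays faster than any power of $y$ as $y\to\infty$, with all constants explicit from the decay of $G$. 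Thus $J$ is, up to an explicitly small tail, a Dirichlet polynomial of length $\asymp X$.

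Next I would shift the $w$-contour in $J$ to $\re w=-\kappa':=-\tfrac12-\delta'$ — legitimate because the super-polynomial decay of $G$ dominates the at-most-polynomial growth of $L$ in the strip (the latter a crude consequence of the Hadamard factorization \eqref{eqn:Hadamard} and \cref{lem:ono_sound}) — crossing only the simple pole of $G(w)X^{w}/w$ at $w=0$, with residue $L(\tfrac12+it,\chi)$. On the new line, $\re(\tfrac12+it+w)=-\delta'<0$, so the functional equation \eqref{eqn:functional_equation}, in the form $L(s,\chi)=\varepsilon(\chi)(q/\pi)^{1/2-s}\tfrac{\Gamma((1-s+\mathfrak{a})/2)}{\Gamma((s+\mathfrak{a})/2)}L(1-s,\bar\chi)$ with $\varepsilon(\chi):=\tau(\chi)/(i^{\mathfrak{a}}\sqrt q)$ of modulus $1$, lets me rewrite $L(\tfrac12+it+w,\chi)$ in terms of the convergent series $L(\tfrac12-it-w,\bar\chi)$; the shifted integral becomes a second smooth Dirichlet series $\varepsilon(\chi)\sum_{m\geq1}\bar\chi(m)m^{-1/2+it}\,\widetilde V(m/Y)$, where $Y\asymp q|1+it|/X$ and $\widetilde V$ absorbs the $\Gamma$-ratio coming from the functional equation. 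Choosing $X\asymp\sqrt{q|1+it|}$ equalizes the two effective lengths, so that
\[
|L(\tfrac12+it,\chi)|\;\leq\;2\sum_{n\leq c\sqrt{q|1+it|}}n^{-1/2}\;+\;(\text{explicit error})\;\leq\;2.97655\,(q|1+it|)^{1/4},
\]
after optimizing $G$, $\delta$, $\delta'$, $X$, $c$, and the distribution of the total constant between the main sums and the error. For $q=1$ the same argument applies verbatim except that the contour shift now also crosses the pole of $\zeta$ at $s=1$, i.e.\ at $w=\tfrac12-it$, contributing $G(\tfrac12-it)X^{1/2-it}/(\tfrac12-it)$; the rapid decay of $G$ makes this $\ll(q|1+it|)^{1/4}$, so it is harmlessly absorbed (or one simply treats the range of small $|t|$ directly).

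The main obstacle is the completely explicit and uniform treatment of the $\Gamma$-factors. One must upgrade the asymptotic $\Gamma((1-s+\mathfrak{a})/2)/\Gamma((s+\mathfrak{a})/2)\asymp(|t|/2)^{1/2-\re s}$ to two-sided bounds valid for \emph{all} $t\in\R$ — in particular for $t$ near $0$, where for $\mathfrak{a}=0$ one must first move past the pole of $\Gamma$ at the origin via $\Gamma(z)=\Gamma(z+1)/z$ — and then feed these bounds through the definition and the tail estimates of $\widetilde V$ with constants tight enough to keep the final constant near $2.97655$ rather than several times larger. Everything else — the shape of $G$, the truncation height of the $w$-contour, and the allocation of the total constant — is a bounded optimization, and \cref{lem:ono_sound} supplies the needed Stirling input on the upper-bound side.
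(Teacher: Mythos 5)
Your route (a smoothed approximate functional equation obtained by a weighted Perron integral, a contour shift, and trivial estimation of the two resulting Dirichlet polynomials) is genuinely different from the paper's argument, and it would certainly produce \emph{some} explicit bound of the shape $C(q_{\chi}|1+it|)^{1/4}$. But for this statement the constant is the entire content, and that is exactly where your plan has a gap. With balanced lengths $X=Y\asymp\sqrt{q|1+it|/(2\pi)}$, even a sharp cutoff gives main terms of size about $2(\sqrt{X}+\sqrt{Y})\approx 4(2\pi)^{-1/4}(q|t|)^{1/4}\approx 2.53\,(q|t|)^{1/4}$, so in the best case you have roughly $18\%$ headroom below $2.97655$ for \emph{all} smoothing losses, $V$-tails, shifted-contour errors, Euler--Maclaurin corrections, and the deviation of the $\Gamma$-ratio weight from modulus one. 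With the Gaussian $G(w)=e^{w^2}$ you actually propose, the Mellin transform of the weight at $1/2$ is $2e^{1/4}$, so the two main sums alone already contribute about $2\cdot 2e^{1/4}(2\pi)^{-1/4}\approx 3.25$ times $(q|t|)^{1/4}$, exceeding the target before a single error term is counted; pushing $V$ back toward a sharp cutoff to recover $2.53$ inflates precisely the tail and transition-region errors you must then control. Worse, in the regime where $q|1+it|$ is small (e.g.\ $q=3$, $t$ near $0$) none of your ``explicit error'' terms is lower order --- they must be bounded in absolute terms against a main term of size about $1.3$ --- and this is exactly the uniform explicit $\Gamma$-factor analysis that you yourself flag as ``the main obstacle'' but do not carry out. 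So the proposal defers the crux, and it is not clear that the optimization it promises even closes below $2.97655$.

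For comparison, the paper avoids the approximate functional equation entirely: it applies a sharp three-lines inequality with the harmonic-measure kernel $1/\cosh(\pi v)$ (the Bennett--Sharpley lemma) to $\log|L|$ on the boundary lines $\re(s)=0$ and $\re(s)=1$, uses the functional equation \eqref{eqn:functional_equation} together with Rademacher's convexity bounds for the $\Gamma$-ratio to reflect the line $\re(s)=0$ onto $\re(s)=1$, and evaluates $\int_{\R} n^{-iv}\,dv/\cosh(\pi v)$ in closed form, so the $\re(s)=1$ contribution is bounded by an explicit alternating sum of $\log\zeta(j+\tfrac32)$; the exponent $1/4$ and the constant $2.97655$ then fall out of a short computation, with the case $q_{\chi}=1$ handled separately by Hiary's explicit bound for $\zeta(\tfrac12+it)$ (note also that your three-lines-free contour manipulation must treat $\zeta$ separately anyway because of the pole, as you observe). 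If you want to salvage the AFE route you would need to carry out the full uniform bookkeeping described above; the sech-kernel argument is the reason the paper obtains such a clean constant so cheaply.
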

\begin{remark}
When $\chi\neq 1$, this improves \cite[(2) and (3)]{Hiary2} and \cite[Lemma 2.4]{Ramare} for all $q\geq 3$ and $t\in\R$.
\end{remark}

\begin{proof}
First, assume that $\chi\neq 1$.  By applying \cite[Lemma 3.1]{BennetSharpley} with $F(x) = L(x+it,\chi)$ evaluated at $x=\frac{1}{2}$, we have that
\[
\log|L(\tfrac{1}{2}+it,\chi)|\leq \frac{1}{2}\int_{\R}\log|L(i(t+v),\chi)L(1+i(t+v),\chi)|\frac{dv}{\cosh(\pi v)}.
\]
Since $L(z,\chi)=\overline{L(\bar{z},\bar{\chi})}$, it follows from \eqref{eqn:functional_equation} that
\[
|L(i(t+v),\chi)|=\sqrt{\frac{q_{\chi}}{\pi}}\Big|\frac{\Gamma(\frac{1+\mathfrak{a}_{\chi}+i(t+v)}{2})}{\Gamma(\frac{\mathfrak{a}_{\chi}-i(t+v)}{2})}L(1+i(t+v),\chi)\Big|.
\]
The bound
\[
\Big|\frac{\Gamma(\frac{1+\mathfrak{a}_{\chi}+i(t+v)}{2})}{\Gamma(\frac{\mathfrak{a}_{\chi}-i(t+v)}{2})}\Big|\leq \frac{|1+i(t+v)|^{\frac{1}{2}}}{\sqrt{2}}
\]
follows from the convexity bounds \cite[Lemma 1]{Rademacher} (when $\mathfrak{a}_{\chi}=0$, with $Q=\mathfrak{a}_{\chi}$ and $s=-(it+v)$) and \cite[Lemma 2]{Rademacher} (when $\mathfrak{a}_{\chi}=1$, with $Q=\mathfrak{a}_{\chi}$ and $s=-(it+v)$) for the $\Gamma$-function.  Both cases results in the bound
\[
|L(i(t+v),\chi)|\leq \sqrt{\frac{q_{\chi}}{2\pi}}|1+i(t+v)|^{\frac{1}{2}}|L(1+i(t+v),\chi)|.
\]
Since the growth of $\cosh(\pi v)$ in $v$ dominates the polynomial growth of $\log|L(1+\xi+i(t+v),\chi)|$ in $v$, the above integral is at most
\[
\lim_{\xi\to0^+}\int_{\R}\log|L(1+\xi+i(t+v),\chi)|\frac{dv}{\cosh(\pi v)}+\frac{1}{4}\int_{\R}\log|1+i(t+v)|\frac{dv}{\cosh(\pi v)}+\frac{1}{4}\log\frac{q _{\chi}}{2\pi}.
\]
By dominated convergence again, the limit equals
\begin{multline*}
\lim_{\xi\to0^+}\re\Big(\sum_{n=1}^{\infty}\frac{\Lambda(n)\chi(n)}{n^{1+\xi+it}\log n}\int_{\R}n^{-iv}\frac{dv}{\cosh(\pi v)}\Big)\\
=2\re\Big(\sum_{n=1}^{\infty}\frac{\Lambda(n)\chi(n)}{(n^{\frac{3}{2}+it}+n^{\frac{1}{2}+it})\log n}\Big)\leq 2\sum_{n=2}^{\infty}\frac{\Lambda(n)}{\log n}\sum_{j=0}^{\infty}\frac{(-1)^j}{n^{\frac{3}{2}+j}}= 2\sum_{j=0}^{\infty}(-1)^{j}\log\zeta(j+\tfrac{3}{2})\leq 1.4988.
\end{multline*}
If $t\in\R$, then
\[
\frac{1}{4}\int_{\R}\log|1+i(t+v)|\frac{dv}{\cosh(\pi v)}=\frac{1}{4}\log|1+it|+\frac{1}{8}\int_{\R}\log\Big|1+\frac{v^2+2|t|v}{t^2+1}\Big|\frac{dv}{\cosh(\pi v)}.
\]
If $w\geq -1$, we see via Taylor expansions that
\[
\lim_{w_0\to w^+}\log|1+w_0|\leq w-\frac{w^2}{2}+\frac{w^3}{3}.
\]
For all $t,v\in\R$, we have that $v^2+2|t|v=(v+|t|)^2-t^2\geq -t^2 >-(t^2+1)$, so
\[
\frac{1}{8}\int_{\R}\log\Big|1+\frac{v^2+2|t|v}{t^2+1}\Big|\frac{dv}{\cosh(\pi v)}\leq \frac{1}{8}\int_{\R}\sum_{j=1}^3\frac{(-1)^{j+1}}{j}\Big(\frac{v^2+2|t|v}{t^2+1}\Big)^j\frac{dv}{\cosh(\pi v)}=\frac{79+210t^2-48t^4}{1536(t^2+1)^3}.
\]
Since
\[
\sup_{t\in\R}\exp\Big(\frac{1}{4}\log\frac{1}{2\pi}+1.4988+\frac{79+210t^2-48t^4}{1536(t^2+1)^3}\Big)\leq 2.97655,
\]
the result follows.

If $\chi=1$, then $L(s,\chi)=\zeta(s)$.  In this case, the identity $\zeta(\bar{s})=\overline{\zeta(s)}$ and \cite[Theorem 1.1]{Hiary} yield
\[
|\zeta(\tfrac{1}{2}+it)|\leq \begin{cases}
	1.461&\mbox{if $0\leq |t|\leq 3$,}\\
	0.63 |t|^{1/6}\log |t|&\mbox{if $|t|\geq 3$.}
\end{cases}
\]
This finishes our proof in all cases.
\end{proof}

Following \cite{HBLinnik}, we combine \cref{prop:sharp_convexity} with a version of Jensen's formula to improve our bounds for the sum over zeros in \cref{lem:Linnik} in a more restrictive range of $r$ than in \cref{lem:Linnik}.

\begin{proposition}
\label{prop:HB_zero-count}
	Let $\chi\pmod{q}$ be primitive, $T\geq 1$, $t\in\R$, $|t|\leq T$, and $0<r<\frac{1}{2}$.  If $\chi=1$, then assume that $3\times 10^{12}\leq |t|\leq T$.  There holds
	\begin{align*}
	\sum_{\substack{L(\rho,\chi)=0 \\ |1+it-\rho|\leq r}}\re\Big(\frac{1}{1+r+it-\rho}\Big)&\leq \frac{\log(qT)+4.7098}{4+8r}+\frac{(\frac{4}{\pi}-1)\log(1+r^{-1})}{1+2r}+2.6908\\
	&+\frac{8r}{(1+2r)^2}(r(2\log(qT)-1)+4)+r^{-1}.
	\end{align*}
\end{proposition}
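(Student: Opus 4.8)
The plan is to apply a Poisson--Jensen formula for $\frac{L'}{L}(\cdot,\chi)$ on the disc $\mathcal D=\{s:|s-c|\le R\}$ centred at $c=1+r+it$ with radius $R=\tfrac{1}{2}+r$. This disc is tangent to the line $\re(s)=\tfrac{1}{2}$, lies in the half-plane $\re(s)\ge\tfrac{1}{2}$, and (since $r<\tfrac{1}{2}$) contains every zero $\rho$ of $L(s,\chi)$ with $|1+it-\rho|\le r$, because such a $\rho$ obeys $|c-\rho|\le2r<R$. For $\chi=1$ the hypothesis $|t|\ge3\times10^{12}$ forces $s=1\notin\overline{\mathcal D}$, so in every case $L(\cdot,\chi)$ is holomorphic on $\overline{\mathcal D}$ and $L(c,\chi)\ne0$; after a harmless perturbation of $R$ I may also assume $L(\cdot,\chi)\ne0$ on $\partial\mathcal D$. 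Factoring $L=Bh$ on $\mathcal D$, with $B$ the finite Blaschke product of $\mathcal D$ over all zeros of $L$ in $\mathcal D$ (all necessarily nontrivial, with $\tfrac{1}{2}<\beta\le1$) and $h$ zero-free, I would derive
\[
\re\frac{L'}{L}(c,\chi)=\sum_{\rho\in\mathcal D}\re\!\Big(\frac1{c-\rho}+\frac{\overline{\rho-c}}{R^2}\Big)+\frac1{\pi R}\int_0^{2\pi}\cos\theta\,\log\big|L(c+Re^{i\theta},\chi)\big|\,d\theta,
\]
the integral term being $\re\frac{h'}{h}(c)$, obtained from the Schwarz representation of $\log h$ together with $|h|=|L|$ on $\partial\mathcal D$.

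Next, since each $\rho\in\mathcal D$ satisfies $\beta\le1<\re(c)$, one has $\re\!\big(\frac1{c-\rho}+\frac{\overline{\rho-c}}{R^2}\big)=\frac{(R^2-|c-\rho|^2)(1+r-\beta)}{R^2|c-\rho|^2}\ge0$. Peeling off the reflection terms and then extending the remaining (termwise nonnegative) sum from the zeros with $|1+it-\rho|\le r$ to all of $\mathcal D$, the displayed identity gives
\[
\sum_{\substack{L(\rho,\chi)=0\\|1+it-\rho|\le r}}\re\frac1{1+r+it-\rho}\ \le\ \re\frac{L'}{L}(c,\chi)-\frac1{\pi R}\int_0^{2\pi}\cos\theta\,\log\big|L(c+Re^{i\theta},\chi)\big|\,d\theta+\frac1{R^2}\!\!\sum_{|1+it-\rho|\le r}\!\!(1+r-\beta).
\]
The first summand is $\le-\frac{\zeta'}{\zeta}(1+r)<\frac1r$ exactly as in the proof of \cref{lem:Linnik}, which yields the term $r^{-1}$. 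In the last summand every $\rho$ with $|1+it-\rho|\le r$ has $\beta\ge1-r$, so $1+r-\beta\le2r$, while $\#\{\rho:|1+it-\rho|\le r\}=n_\chi(r,1+it)\le r(2\log(qT)-1)+4$ by \cref{lem:Linnik}; since $R^2=\tfrac{1}{4}(1+2r)^2$ this produces exactly $\frac{8r}{(1+2r)^2}\big(r(2\log(qT)-1)+4\big)$.

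What remains is to bound $-\frac1{\pi R}\int_0^{2\pi}\cos\theta\,\log|L(c+Re^{i\theta},\chi)|\,d\theta$. Writing $s=c+Re^{i\theta}$, so $\re(s)=1+r+R\cos\theta$ and $\im(s)=t+R\sin\theta$, I would split $\partial\mathcal D$ at $\cos\theta=0$. On the arc $\cos\theta\ge0$ one has $\re(s)>1$, so the Euler product gives $\log|L(s,\chi)|\ge\log\zeta(2\re(s))-\log\zeta(\re(s))$ and hence $-\cos\theta\log|L(s,\chi)|\le\cos\theta\big(\log\zeta(\re(s))-\log\zeta(2\re(s))\big)$; the resulting integral is a bounded function of $r$ (the pole of $\zeta$ at $1$ is harmless because the weight $\cos\theta$ vanishes there) and is absorbed into the constant. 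On the arc $\cos\theta<0$, where $\tfrac{1}{2}\le\re(s)<1+r$, I would invoke the convexity (Phragm\'en--Lindel\"of) principle on the strip $\tfrac{1}{2}\le\re(s)\le1+r$, with \cref{prop:sharp_convexity} on the edge $\re(s)=\tfrac{1}{2}$ and $|L(s,\chi)|\le\zeta(1+r)\le1+r^{-1}$ on the edge $\re(s)=1+r$; because $(1+r)-\re(s)=-R\cos\theta$, $\re(s)-\tfrac{1}{2}=R(1+\cos\theta)$ and $(1+r)-\tfrac{1}{2}=R$, the interpolation weights collapse to $-\cos\theta$ and $1+\cos\theta$, so $\log|L(s,\chi)|\le-\cos\theta\big(\tfrac{1}{4}\log q+\tfrac{1}{8}\log(1+\im(s)^2)+\log2.97655\big)+(1+\cos\theta)\log(1+r^{-1})$. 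Multiplying by $-\cos\theta>0$, integrating, and using $\int_{\cos\theta<0}\cos^2\theta\,d\theta=\tfrac{\pi}{2}$, $\int_{\cos\theta<0}(-\cos\theta)\,d\theta=2$, and a bound $\int_{\cos\theta<0}\cos^2\theta\,\tfrac{1}{8}\log(1+\im(s)^2)\,d\theta\le\tfrac{\pi}{16}\log(2T^2)$ (via $|\im(s)|\le|t|+R$, $|t|\le T$, with the weight $\cos^2\theta$ suppressing the region $|\sin\theta|\approx1$), I would reach $\frac1{1+2r}\big(\tfrac{1}{4}\log(qT)+\tfrac{1}{8}\log2+\log2.97655\big)+\frac{2-\pi/2}{\pi R}\log(1+r^{-1})=\frac{\log(qT)+4.7098}{4+8r}+\frac{4/\pi-1}{1+2r}\log(1+r^{-1})$, with the remaining constants (including the $\cos\theta\ge0$ contribution) absorbed into $2.6908$.

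The hard part will be the bookkeeping in this last step: making the convexity bound explicit and uniform (for $\chi=1$ one applies it to $(s-1)\zeta(s)$ rather than $\zeta$ itself, which is precisely where the hypothesis $3\times10^{12}\le|t|$ is used, since it keeps the pole at $s=1$ away from $\mathcal D$), verifying the $\tfrac{1}{8}\log(1+\im(s)^2)$ estimate in the few ranges of small $q$ and $T$ where it is close to sharp (there one needs a genuine weighted average in $\theta$, not a pointwise bound), and confirming that the bounded-in-$r$ contribution from the arc $\cos\theta\ge0$ together with all stray constants stays below $2.6908$. None of this is conceptually difficult; it is simply where the precise numerical constants are pinned down.
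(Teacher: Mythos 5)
Your proposal is correct and takes essentially the same route as the paper's proof: the $\cos\theta$-weighted Poisson--Jensen identity on the disc of radius $\tfrac12+r$ about $1+r+it$ (the paper invokes Heath-Brown's Lemma 3.2 applied to $(\tfrac{z-1}{z+1})^{\delta_{\chi}}L(z,\chi)$ and lets $r_1\nearrow \tfrac12+r$, while you work with $L$ directly and perturb $R$), the bound $n_{\chi}(r,1+it)\leq r(2\log(qT)-1)+4$ from \cref{lem:Linnik} for the reflected-zero term, $\re\frac{L'}{L}(1+r+it,\chi)\leq-\frac{\zeta'}{\zeta}(1+r)\leq r^{-1}$, elementary bounds via $\zeta$ on the arc $\cos\theta\geq 0$, and Phragm\'en--Lindel\"of interpolation between \cref{prop:sharp_convexity} on $\re(s)=\tfrac12$ and $\zeta(1+r)\leq 1+r^{-1}$ on $\re(s)=1+r$ for the arc $\cos\theta<0$, yielding the identical four terms. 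The bookkeeping you defer (uniformity of the convexity input in the height, and absorbing the right-hand arc and stray constants into $2.6908$) is precisely the numerical work the paper carries out.
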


\begin{proof}
	Let $s=1+r+it$.  Choose $r_{1}\in(\max\{\frac{1}{2},2r\},\frac{1}{2}+r)$ so that no zeros of $L(z,\chi)$ lie on the circle $\{w\in\mathbb{C}\colon |s-w|=r_{1}\}$.  By \cite[Lemma 3.2]{HBLinnik} (with $f(z) = (\frac{z-1}{z+1})^{\delta_{\chi}}L(z,\chi)$ and $a=s$), we have that if
	\[
	J:=\int_0^{2\pi}\frac{\cos\theta}{\pi R}\log\Big|\Big(\frac{s+r_{1}e^{i\theta}-1}{s+r_{1}e^{i\theta}+1}\Big)^{\delta_{\chi}}L(s+r_{1}e^{i\theta},\chi)\Big|d\theta,
	\]
	then
\[
-\re\Big(\frac{L'}{L}(s,\chi)\Big)=\re\Big(\frac{2\delta_{\chi}}{s^2-1}\Big) - \sum_{|s-\rho|<r_1}\re\Big(\frac{1}{s-\rho}-\frac{s-\rho}{r_{1}^2}\Big)-J.
\]
Under our hypotheses, for $t$ and $T$ if $\chi=1$, we have the bound
\[
\delta_{\chi}\Big|\re\Big(\frac{2}{s^2-1}\Big)\Big|=\delta_{\chi}\Big|\frac{r}{r^2+t^2}-\frac{2+r}{(2+r)^2+t^2}\Big|\leq \frac{\delta_{\chi}}{10^{24}}.
\]
Since $0<r<2r<r_1$ and $0<\beta<1$, it follows using nonnegativity that
\begin{align*}
-\re\Big(\frac{L'}{L}(s,\chi)\Big)\leq \frac{\delta_{\chi}}{10^{25}}-\sum_{|1+it-\rho|\leq r}\re\Big(\frac{1}{s-\rho}\Big) + \sum_{|1+it-\rho|\leq r}\re\Big(\frac{s-\rho}{r_{1}^2}\Big)-J.
\end{align*}
If $|1+it-\rho|<r$, then $\re(s-\rho)=1-\beta+r<2r$.  Consequently, by \cref{lem:Linnik}, we have that
\[
\sum_{|1+it-\rho|\leq r}\re\Big(\frac{s-\rho}{r_1^2}\Big)\leq \frac{2r}{r_1^2}n_{\chi}(r,1+it)\leq \frac{2r}{r_1^2}(r(2\log(qT)-1)+4).
\]
It remains to bound $|J|$ and choose $r_{1}$.

We will work with the decomposition
\[
J=\int_0^{\frac{\pi}{2}-r}+\int_{\frac{\pi}{2}-r}^{\frac{\pi}{2}}+\int_{\frac{\pi}{2}}^{\frac{3\pi}{2}}+\int_{\frac{3\pi}{2}}^{\frac{3\pi}{2}+r}+\int_{\frac{3\pi}{2}+r}^{2\pi}=J_1+J_2+J_3+J_4+J_5.
\]
We observe that if $\theta\in[0,\frac{\pi}{2}]\cup[\frac{3\pi}{2},2\pi]$, then $\cos\theta\geq 0$ and
\[
\frac{\cos\theta}{\pi r_{1}}\log\Big|\Big(\frac{s+r_{1}e^{i\theta}-1}{s+r_{1}e^{i\theta}+1}\Big)^{\delta_{\chi}}L(s+r_{1}e^{i\theta},\chi)\Big|\leq \frac{\cos\theta}{\pi r_{1}}\log\zeta(1+r+r_{1}\cos\theta)\leq \frac{\cos\theta}{\pi r_{1}}\log\frac{1+r+r_{1}\cos\theta}{r+r_{1}\cos\theta}.	
\]
(We have used the fact that $\re(s+r_{1}e^{i\theta})\geq 1+r$, so $(\cos\theta)\log|\frac{s+r_{1}e^{i\theta}-1}{s+r_{1}e^{i\theta}+1}|\leq 0$ in our current range of $\theta$.)  Our hypotheses on $r$ and $r_{1}$ imply that if $0<\theta<\frac{\pi}{2}$, then
\[
\frac{1}{\pi r_{1}}\log\frac{1+r+r_{1}\cos\theta}{r+r_{1}\cos\theta}\leq \frac{2}{\pi}\log\frac{5}{2(r+r_{1}\cos\theta)}\leq \frac{2}{\pi}\log\Big(5\min\Big\{\frac{1}{\cos\theta},\frac{1}{2r}\Big\}\Big).
\]
Therefore,
\[
	J_1+J_2 \leq \frac{2}{\pi}\Big(\int_0^{\frac{\pi}{2}-r}\Big(\log\frac{5}{\cos\theta}\Big)\cos\theta d\theta+\Big(\log\frac{5}{2r}\Big)\int_{\frac{\pi}{2}-r}^{\frac{\pi}{2}}\cos\theta d\theta\Big)\leq 1.3454 .
\]
We similarly estimate $J_4+J_5$ using the symmetry of $\cos\theta$, concluding that
\[
J_1+J_2+J_4+J_5 \leq 2.6908 .
\]

We estimate $J_3$ with a different approach.  Let $\theta\in[\frac{\pi}{2},\frac{3\pi}{2}]$.  Since $2r<r_{1}<\frac{1}{2}+r$, we have that $\frac{1}{2}\leq 1+r+r_{1}\cos\theta\leq 1+r$.  Write $\sigma'=1+r+r_{1}\cos\theta$ and $y=t+r_{1}\sin\theta$ so that $s+r_{1}e^{i\theta}=\sigma'+iy$.  Also, if
\[
F(\sigma'+iy):=\Big(\frac{\sigma'+iy-1}{\sigma'+iy+1}\Big)^{\delta_{\chi}}L(\sigma'+iy,\chi),
\]
then
\[
F(\sigma'+iy)=\Big(\frac{s+r_1 e^{i\theta}-1}{s+r_1 e^{i\theta}+2}\Big)^{\delta_{\chi}}L(s+r_1 e^{i\theta},\chi).
\]
We estimate $|F(\sigma'+iy)|$ on the lines $\sigma'=\frac{1}{2}$  and $\sigma'=1+r$ using \cref{prop:sharp_convexity} and the bound
\[
|F(1+r+iu)|\leq \zeta(1+r^{-1})\leq 1+r^{-1},\qquad u\in\R,
\]
respectively.  We estimate $F(\sigma'+iy)$ in between these lines using the Phragm{\'e}n--Lindel{\"o}f principle; in particular,
\[
|F(\sigma'+iy)|\leq (e^{\frac{1}{4}\log(q|1+it|)+1.0908})^{\frac{1+r-\sigma'}{\frac{1}{2}+r}}(1+r^{-1})^{1-\frac{1+r-\sigma'}{\frac{1}{2}+r}}.
\]
(We have used the fact that if $\re(z)\geq \frac{1}{2}$, then $|\frac{z-1}{z+1}|\leq 1$.)  The ranges of $\theta$, $r$, and $r_{1}$ ensure that
\[
\log\Big|(e^{\frac{1}{4}\log(q|1+it|)+1.0908})^{-\frac{2r_{1} \cos\theta}{1+2r}}(1+r^{-1})^{\frac{1+2 r+2 r_{1} \cos\theta}{1+2r}}\Big|\geq 0,
\]
so
\[
|J_3|\leq \frac{1}{\pi r_{1}}\int_{\frac{\pi}{2}}^{\frac{3\pi}{2}}|\cos\theta|\cdot\log\Big|(e^{\frac{1}{4}\log(q|1+it|)+1.0908})^{-\frac{2r_{1} \cos\theta}{1+2r}}(1+r^{-1})^{\frac{1+2 r+2 r_{1} \cos\theta}{1+2r}}\Big|d\theta.
\]
Collecting our estimates for $J_1,\ldots,J_5$, we find that
\[
\lim_{r_1\to(r+\frac{1}{2})^{-}}|J|\leq \frac{\log(qT)+4.709\,774}{4+8r}+\frac{(\frac{4}{\pi}-1)\log(1+r^{-1})}{1+2r}+2.6908.
\]
Since
\[
\frac{4.709\,774}{4+8r}+\frac{\delta_{\chi}}{10^{24}}\leq \frac{4.7098}{4+8r},\qquad \re\Big(\frac{L'}{L}(1+r+it,\chi)\Big)\leq -\frac{\zeta'}{\zeta}(1+r)\leq\frac{1}{r}
\]
in our range of $r$, we arrive at the desired result.
\end{proof}

\begin{corollary}
\label{cor:Linnik_lemma}
	Let $Q\geq 3$, $T\geq 1$, and $\chi\pmod{q}$ be a primitive Dirichlet character with $q\leq Q$.  Let $t\in\R$ and $|t|\leq T$.  If $\chi=1$, then assume that $3\times 10^{12}\leq |t|\leq T$.  If $\chi\neq 1$, then assume that $\max\{Q,T\}>10\,000$.  If $n_{\chi}(r,s)$ is as in \eqref{eqn:nchi_def} and
	\[
	\frac{1}{3\log(QT)}\leq r\leq \frac{1}{10},
	\]
	then $n_{\chi}(r,1+it)\leq r(1+10^{-7})^{-1}(\frac{2}{3}\log(QT)+13.04)+2$.
\end{corollary}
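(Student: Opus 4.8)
The plan is to bound $n_\chi(r,1+it)$ above by $2r$ times the sum over zeros estimated in \cref{prop:HB_zero-count}, substitute that estimate, and then reduce everything to a single numerical inequality in the variable $r$. \emph{First (geometric reduction):} since $0<r<\tfrac12$, every zero $\rho=\beta+i\gamma$ of $L(s,\chi)$ with $|1+it-\rho|\le r$ automatically has $0<\beta<1$ --- the trivial zeros lie on the negative real axis, $L(s,\chi)$ does not vanish on $\re(s)=1$, and $|1+it-\rho|\le r<\tfrac12$ forces $1-\beta\le r$ --- so $n_\chi(r,1+it)$ counts exactly the zeros of $L(s,\chi)$ in the disc $|1+it-\rho|\le r$. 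For such $\rho$, writing $u=1-\beta\in(0,r]$ and $v=t-\gamma$ one has $u^2+v^2\le r^2$ and $1+r+it-\rho=(r+u)+iv$, so
\[
\re\Big(\frac{1}{1+r+it-\rho}\Big)=\frac{r+u}{(r+u)^2+v^2}\ge\frac{r+u}{2r(r+u)}=\frac{1}{2r},
\]
using $(r+u)^2+v^2=r^2+2ru+(u^2+v^2)\le 2r^2+2ru=2r(r+u)$. Summing over the zeros in the disc yields $n_\chi(r,1+it)\le 2r\sum_{L(\rho,\chi)=0,\,|1+it-\rho|\le r}\re\big(\tfrac{1}{1+r+it-\rho}\big)$.

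\emph{Next, invoke \cref{prop:HB_zero-count}:} its hypotheses hold verbatim ($\chi$ primitive, $T\ge1$, $|t|\le T$, $0<r<\tfrac12$, and $3\times10^{12}\le|t|\le T$ when $\chi=1$), so I would insert its bound into the last display, multiply through by $2r$, use $q\le Q$ to replace $\log(qT)$ by $\log(QT)$ (it occurs only with positive coefficients), and peel off the summand $2r\cdot r^{-1}=2$. Collecting the coefficient of $\log(QT)$, this gives
\[
n_\chi(r,1+it)\le\frac{r+2r^2+64r^3}{2(1+2r)^2}\,\log(QT)+G(r)+2,\qquad G(r):=\frac{4.7098\,r}{2(1+2r)}+\frac{2r(\frac{4}{\pi}-1)\log(1+r^{-1})}{1+2r}+5.3816\,r+\frac{16r^2(4-r)}{(1+2r)^2}.
\]

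\emph{Finally, the numerical optimization:} it remains to check, for $\tfrac1{3\log(QT)}\le r\le\tfrac1{10}$, that the displayed upper bound is at most $r(1+10^{-7})^{-1}\big(\tfrac23\log(QT)+13.04\big)+2$. (The hypotheses $\max\{Q,T\}>10\,000$ for $\chi\neq1$ and $|t|\ge3\times10^{12}$ for $\chi=1$ force $\log(QT)>\tfrac{10}{3}$, so this range of $r$ is nonempty.) Writing $h(r):=\tfrac{1+2r+64r^2}{2(1+2r)^2}$, a short computation shows $h(r)\le h(\tfrac1{10})=\tfrac{1.84}{2.88}<0.639<\tfrac23(1+10^{-7})^{-1}$ on $(0,\tfrac1{10}]$, so the $\log(QT)$-coefficient on the left is strictly smaller; the resulting surplus $r\big(\tfrac23(1+10^{-7})^{-1}-h(r)\big)\log(QT)$ is, via $\log(QT)\ge\tfrac1{3r}$, at least $\tfrac13\big(\tfrac23(1+10^{-7})^{-1}-h(r)\big)$. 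Hence it is enough to confirm the one-variable inequality
\[
G(r)\le\tfrac13\Big(\tfrac23(1+10^{-7})^{-1}-h(r)\Big)+13.04\,(1+10^{-7})^{-1}\,r,\qquad 0<r\le\tfrac1{10},
\]
which holds: as $r\to0^+$ the left side is $O(r\log(1/r))\to0$ while the right side tends to $\tfrac13\big(\tfrac23(1+10^{-7})^{-1}-\tfrac12\big)>0$, and near $r=\tfrac1{10}$ both sides approach their extreme values with a positive margin of order $10^{-2}$. This is a routine check with a computer algebra system.

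\emph{Main obstacle.} The only delicate point is the last step. The term $\tfrac{2r(\frac4\pi-1)\log(1+r^{-1})}{1+2r}$ in $G(r)$ is not $O(r)$ as $r\to0$, so it cannot be absorbed into the bare constant $13.04\,r$; one must instead spend the gap between the true $\log(QT)$-coefficient $h(r)$ and $\tfrac23$, converting it --- using $r\ge\tfrac1{3\log(QT)}$ --- into an additive constant large enough to swallow the offending logarithm. Everything else reduces to bookkeeping and a finite numerical verification.
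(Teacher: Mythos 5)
Your proposal is correct and follows essentially the same route as the paper: the lower bound $\re\big(\tfrac{1}{1+r+it-\rho}\big)\ge\tfrac1{2r}$ for zeros in the disc, an application of \cref{prop:HB_zero-count}, and a final numerical verification over the admissible range of $r$ using $\log(QT)\ge\tfrac1{3r}$. The only difference is organizational — you isolate the $\log(QT)$-coefficient $h(r)$ and reduce to a one-variable inequality in $r$, whereas the paper first bounds everything by a function of $\log(QT)$ and then invokes $\max\{Q,T\}>10^4$ — and your arithmetic and the final numerical claim check out.
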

\begin{remark}
The peculiar factor of $(1+10^{-7})^{-1}$ is for future convenience.	
\end{remark}

\begin{proof}
By our hypotheses on $Q$ and $T$, the range of $\eta$ is nonempty.  Let $s=1+r+it$.  If $\rho$ is a nontrivial zero of $L(s,\chi)$ and $|1+it-\rho|\leq r$, then
\[
\frac{1}{2r}\leq\re\Big(\frac{1}{1+r+it-\rho}\Big).
\]
Therefore, by \cref{prop:HB_zero-count} (which assumes that $3\times 10^{12}\leq|t|\leq T$ when $\chi=1$), we have that
\begin{multline*}
	n_{\chi}(r,1+it)\leq 2r\Big(\frac{\log(qT) +4.7908}{4+8r}+\frac{(\frac{4}{\pi}-1)\log(1+r^{-1})}{1+2r}+2.6908\\
	+\frac{8r}{(1+2r)^2}(r(2\log(qT)-1)+4)+r^{-1}\Big).
\end{multline*}
If $\frac{1}{3\log(QT)}\leq r\leq \frac{1}{10}$, then
\begin{align*}
&2\Big(\frac{\log(qT) +4.7908}{4+8r}+\frac{(\frac{4}{\pi}-1)\log(1+r^{-1})}{1+2r}+2.6908+\frac{8r}{(1+2r)^2}(r(2\log(qT)-1)+4)\Big)\\
&\leq \frac{23}{36}\log(QT)+\frac{5}{3}\Big(\frac{4}{\pi}-1\Big)\log(1+3\log(QT))+11.7111.
\end{align*}
The result now follows since $\max\{Q,T\}>10\,000$.
\end{proof}
\begin{remark}
Under the hypotheses of \cref{cor:Linnik_lemma}, our proof can be modified to produce an upper bound of the form $n_{\chi}(r,1+it)\leq \frac{1}{2}r\log(QT)+O(\log\log(QT))$ when $\frac{1}{3\log(eQT)}\leq r\leq \frac{1}{30}$.  But we cannot take full advantage of this due to considerations involving application of the ``pre-sifted'' large sieve inequality in \cref{lem:large_sieve} during our proof of \cref{thm:GLFZDE} in  \cref{sec:proof_main_theorem}.
\end{remark}

\subsection{The Deuring--Heilbronn phenomenon}

Here, we quantify the idea that if $\beta_1(Q)$ is close to $1$ in \cref{thm:GZFR}, then the zero-free region for all other Dirichlet $L$-functions (including $\zeta(s)$) improves as a specific function of how close $\beta_1(Q)$ is to $s=1$.  This was first quantified by Linnik \cite{Linnik} for low-lying zeros.  Our proof relies on the following lower bound for power sums.

\begin{lemma}[Theorem 2.2 of \cite{KNW}]
\label{lem:turan2}
	Let $\epsilon>0$.  Let $(z_n)_{n=1}^{\infty}$ (resp. $(b_n)_{n=1}^{\infty}$) be a sequence of complex (resp. nonnegative real) numbers with $z_1\neq 0$ (resp. $b_1>0$).  There exists an integer
	\[
	1\leq j \leq (8+\epsilon)\frac{1}{b_1}\sum_{n=1}^{\infty}\frac{b_n|z_n|}{|z_1|+|z_n|}
	\]
	such that
	\[
	\re\Big(\sum_{n=1}^{\infty}b_n z_n^j\Big)\geq \frac{\epsilon}{32+4\epsilon}b_1|z_1|^j.
	\]
\end{lemma}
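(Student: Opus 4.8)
The statement is Theorem~2.2 of \cite{KNW}, so formally the proof is a citation; I outline the idea, which is Tur{\'a}n's power-sum method in one-sided form. Write $s_j:=\sum_{n\ge 1}b_n z_n^j$ for the power sums in question. Replacing each $z_n$ by $z_n/|z_1|$ divides $s_j$ by $|z_1|^j$ and leaves the range of $j$ unchanged, so I may assume $|z_1|=1$, and the goal becomes to exhibit $j$ in the stated range with $\re(s_j)\ge\frac{\epsilon}{32+4\epsilon}b_1$. A truncation/limiting argument reduces matters to finitely many nonzero $b_n$, so that $\sup_n|z_n|<\infty$ and all series converge absolutely.

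Argue by contradiction: suppose $\re(s_j)<\frac{\epsilon}{32+4\epsilon}b_1$ for every integer $j$ with $1\le j\le J$, where $J:=\lfloor(8+\epsilon)b_1^{-1}\sum_n\tfrac{b_n|z_n|}{1+|z_n|}\rfloor$. The plan is to construct a Tur{\'a}n kernel $P(z)=\sum_{j=1}^{J}c_j z^j$ with $c_j\ge 0$ and $P(0)=0$ that is concentrated at $z_1$: writing $\arg z_1=2\pi\alpha$, Dirichlet's box principle produces a short common difference $q$ for which $z_1^{kq}$ stays near $+1$ over a useful range of $k$, and supporting the $c_j$ on multiples of $q$ with a Fej{\'e}r-type profile makes $\re P(z_1)$ a definite proportion of $\sum_j c_j$ while keeping $|P|$ small off of $z_1$. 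Since $P(0)=0$, the contribution of $z_n$ to $\sum_n b_n\re P(z_n)$ scales with $|z_n|$ when $|z_n|$ is small, and one shows $\sum_{n\ge 2}b_n\re P(z_n)\ge -\,C\big(\sum_j c_j\big)\sum_{n\ge 2}\tfrac{b_n|z_n|}{1+|z_n|}$, the weight $\tfrac{|z_n|}{1+|z_n|}$ being precisely what interpolates between the small- and large-modulus regimes.

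Multiplying the contradiction hypothesis by $c_j\ge 0$ and summing over $1\le j\le J$, and using that $\re(s_j)=\sum_n b_n\re(z_n^j)$ with the $c_j$ and $b_n$ real, gives
\[
\sum_{n\ge 1}b_n\,\re P(z_n)=\sum_{j=1}^{J}c_j\,\re(s_j)<\frac{\epsilon}{32+4\epsilon}\,b_1\sum_{j=1}^{J}c_j.
\]
Combining this with the lower bound $\sum_n b_n\re P(z_n)\ge b_1\re P(z_1)-C(\sum_j c_j)\sum_{n\ge 2}\tfrac{b_n|z_n|}{1+|z_n|}$, the localization estimate for $\re P(z_1)$, and the defining inequality for $J$ produces a purely numerical inequality that fails once the kernel parameters are chosen optimally; this is the contradiction. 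The constants $8+\epsilon$ and $\frac{\epsilon}{32+4\epsilon}$ fall out of optimizing the length $q$ of the pigeonhole step and the Fej{\'e}r profile $(c_j)$ against the worst-case tail constant $C$.

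The main obstacle is exactly this kernel construction together with the accompanying constant bookkeeping: the coherent gain from the $n=1$ term must be made to survive uniformly over the Diophantine behaviour of $\arg z_1$ and over all configurations of the remaining $z_n$ — in particular those clustering near $z_1$, which is the source of the factor $8$ — and no single estimate suffices. By contrast, the scaling normalization and the reduction to finitely many terms are routine.
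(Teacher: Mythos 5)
The paper gives no argument for this lemma at all: it is imported verbatim as Theorem~2.2 of \cite{KNW}, so your treatment---formally a citation---coincides with the paper's own. The additional Tur\'an-kernel sketch you outline is heuristic extra material that the paper neither contains nor relies on, so there is nothing in the paper to check it against, and it plays no role in how the lemma is used (only \cref{cor:turan3}, deduced trivially from the stated bound, is needed later).
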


\begin{corollary}
\label{cor:turan3}
	Let $\epsilon>0$.  Let $(z_n)_{n=1}^{\infty}$ be a sequence of complex numbers with $z_1\neq 0$.  Define $\mathcal{M}:=\frac{1}{|z_1|}\sum_{n=1}^{\infty}|z_n|$.  There exists an integer $1\leq j\leq (8+\epsilon)\mathcal{M}$ such that
	\[
	\re\Big(\sum_{n=1}^{\infty}z_n^j\Big)\geq \frac{\epsilon}{32+4\epsilon}|z_1|^j.
	\]
\end{corollary}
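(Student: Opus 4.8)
The plan is to deduce \cref{cor:turan3} from \cref{lem:turan2} by a choice of the weights $b_n$ that eliminates the dependence on the individual $b_n$'s in the hypothesis. The natural choice is $b_n=1$ for all $n$, which certainly satisfies $b_1=1>0$; then the conclusion of \cref{lem:turan2} with this choice of weights reads: there exists an integer
\[
1\leq j\leq (8+\epsilon)\sum_{n=1}^{\infty}\frac{|z_n|}{|z_1|+|z_n|}
\]
such that $\re\big(\sum_{n=1}^{\infty}z_n^j\big)\geq \frac{\epsilon}{32+4\epsilon}|z_1|^j$. This is already the desired lower bound, so the only thing to check is that the upper bound on $j$ implied here is no worse than $(8+\epsilon)\mathcal{M}$.

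For that, it suffices to observe the elementary inequality
\[
\sum_{n=1}^{\infty}\frac{|z_n|}{|z_1|+|z_n|}\leq \frac{1}{|z_1|}\sum_{n=1}^{\infty}|z_n|=\mathcal{M},
\]
which follows term-by-term from $\frac{|z_n|}{|z_1|+|z_n|}\leq \frac{|z_n|}{|z_1|}$ (valid since $|z_1|+|z_n|\geq |z_1|>0$). Hence any $j$ satisfying the bound in \cref{lem:turan2} also satisfies $1\leq j\leq (8+\epsilon)\mathcal{M}$, and the same $j$ witnesses the power-sum lower bound. If $\mathcal{M}=\infty$ the statement is vacuous, and if $\sum_n|z_n|$ converges the series $\sum_n z_n^j$ converges absolutely for every $j\geq 1$ (since $|z_n|\to 0$ forces $|z_n|<1$ eventually, so $|z_n|^j\leq |z_n|$ for large $n$), so all the series in sight make sense.

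There is essentially no obstacle here; the result is a direct specialization of \cref{lem:turan2}. The only minor point worth noting is that \cref{lem:turan2} is stated for nonnegative weights $(b_n)$ with $b_1>0$, and the constant sequence $b_n\equiv 1$ is a legitimate such choice, so no approximation argument is needed. One could alternatively absorb a general weighting into the $z_n$, but the constant-weight specialization is the cleanest route and is exactly what is needed for the Deuring--Heilbronn application that follows.
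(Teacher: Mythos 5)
Your proof is correct and is exactly the paper's argument: apply \cref{lem:turan2} with $b_n=1$ and use the termwise bound $\frac{|z_n|}{|z_1|+|z_n|}\leq \frac{|z_n|}{|z_1|}$ to see that the resulting range for $j$ is contained in $[1,(8+\epsilon)\mathcal{M}]$. Nothing further is needed.
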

\begin{proof}
Since $\frac{|z_n|}{|z_1|+|z_n|}\leq \frac{|z_n|}{|z_1|}$ when  $z_1\neq 0$, this follows from \cref{lem:turan2} with $b_n=1$ for all $n\geq 1 $.
\end{proof}

We use \cref{cor:turan3} to prove what appears to be the first completely explicit version of the Deuring--Heilbronn zero repulsion phenomenon.

\begin{theorem}
\label{thm:DH}
Let $Q>400\,000$ and $T\geq 1$.  Recall \eqref{eqn:prod_L-functions} and \cref{thm:GZFR}.  Suppose that $\beta_1(Q)\geq 1-\frac{\Cr{ZFR}}{\log Q}$, and let $\chi_1\pmod{q_1}$ be the corresponding primitive quadratic character with $q_1\in(400\,000,Q]$.  If $\beta+i\gamma\neq \beta_1(Q)$ is a nontrivial zero of $\mathscr{L}(s,Q)$ with $\beta>\frac{1}{2}$ and $|\gamma|\leq T$, then
\[
\beta\leq 1-\frac{\log (\frac{1}{(1-\beta_1(Q))(670\,564.676 + 347\,029.502 \log Q + 107\,906.278\log T)})}{104.645 + 54.156 \log Q + 16.84\log T}.
\]
\end{theorem}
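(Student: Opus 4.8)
The plan is to give a completely explicit rendering of the classical Tur\'an power‑sum proof of the Deuring--Heilbronn phenomenon. Throughout write $\lambda_1:=1-\beta_1(Q)\in(0,\Cr{ZFR}/\log Q)$ and let $\rho_0=\beta_0+i\gamma_0$ be the zero in the statement. Since \cref{thm:GZFR} already gives $1-\beta_0\geq\Cr{ZFR}/\log\max\{Q,Q|\gamma_0|\}\geq\Cr{ZFR}/(\log Q+\log T)$, we may assume that the asserted inequality does not follow from this, i.e.\ (with $W,D$ as in the statement) that $\lambda_1<W^{-1}e^{-\Cr{ZFR}D/(\log Q+\log T)}$; in particular $\lambda_1$ is small, $\lambda_1\ll 1/\log(QT)$. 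Since $\rho_0$ is a nontrivial zero of $L(s,\chi_0)$ for some primitive $\chi_0\pmod{q_0}$ with $q_0\leq Q$, fix such a $\chi_0$. The cases $\chi_0=1$ (zeros of $\zeta$, where \cref{lem:RH} gives more) and $\chi_0=\chi_1$ (where the auxiliary product below degenerates pleasantly) are disposed of separately, so assume $\chi_0$ is nontrivial and $\chi_0\neq\chi_1$; a small-$|\gamma_0|$ case, and the case of quadratic $\chi_0$ with $\gamma_0=0$ (where $\chi_0^2$ is principal and the weights $3,3,4$ become $4,4,4$), are handled by the same argument with minor changes for the stray poles.

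\emph{Step 1 (a positive trigonometric combination).} For $n$ with $\chi_0(n)\neq 0$ put $\theta_n:=\arg\chi_0(n)-\gamma_0\log n$, so $\cos(m\theta_n)=\re(\chi_0(n)^m n^{-im\gamma_0})$, and set the corresponding terms to $0$ otherwise. Then
\[
c(n):=\bigl(1+\chi_1(n)\bigr)\bigl(3+4\cos\theta_n+\cos 2\theta_n\bigr)=2\bigl(1+\chi_1(n)\bigr)(1+\cos\theta_n)^2\geq 0,
\]
hence $\sum_n\Lambda(n)c(n)(\log n)^{k-1}n^{-\sigma}\geq 0$ for every integer $k\geq 1$ and $\sigma>1$. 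Combining the Hadamard formula \eqref{eqn:Hadamard} for the primitive characters inducing $1,\chi_1,\chi_0,\chi_0\chi_1,\chi_0^2,\chi_0^2\chi_1$ (of conductors at most $q_0q_1\leq Q^2$), the last four evaluated at the shifted arguments $s+i\gamma_0,\,s+i\gamma_0,\,s+2i\gamma_0,\,s+2i\gamma_0$, with respective weights $3,3,4,4,1,1$, then taking real parts, differentiating $k-1$ times in $\sigma$ (which annihilates every $\log$‑conductor term), and dividing by $(k-1)!$, one obtains for $k\geq 2$ and $1<\sigma\leq 2$:
\[
0\leq \frac{3}{(\sigma-1)^{k}}-\frac{3}{(\sigma-\beta_1(Q))^{k}}-4\re\!\!\sum_{L(\rho,\chi_0)=0}\!\!\frac{1}{(\sigma+i\gamma_0-\rho)^{k}}-\bigl(\text{analogous zero-sums for }\zeta,\ \chi_1\setminus\{\beta_1\},\ \chi_0\chi_1,\ \chi_0^2,\ \chi_0^2\chi_1\bigr)+\widetilde G_{k},
\]
where $|\widetilde G_{k}|$ is bounded by an absolute constant, estimated via \cref{lem:ono_sound} (the shifts enter the $\Gamma'/\Gamma$‑terms harmlessly because we differentiate at least twice).

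\emph{Step 2 (the power sum and the error terms).} Set $s_0:=\sigma+i\gamma_0$, so $s_0-\rho_0=\sigma-\beta_0>0$ is real. Apply \cref{cor:turan3} (or the weighted \cref{lem:turan2}) to the numbers $z_\rho:=(s_0-\rho)^{-1}$, $\rho$ ranging over the zeros of $L(s,\chi_0)$ with $|s_0-\rho|\leq R$ for a fixed $R>1$ (so $\rho_0$, at distance $\sigma-\beta_0<\tfrac12+\delta$, is among them), with $z_1:=(\sigma-\beta_0)^{-1}$; this produces an integer $j\leq(8+\epsilon)\mathcal M$, $\mathcal M:=(\sigma-\beta_0)\sum_{|s_0-\rho|\leq R}|s_0-\rho|^{-1}$, with
\[
\re\!\!\sum_{|s_0-\rho|\leq R}\!\! z_\rho^{\,j}\;\geq\;\frac{\epsilon}{32+4\epsilon}\,\frac{1}{(\sigma-\beta_0)^{j}}.
\]
We bound $\mathcal M$, hence $j$, by zero counts of \cref{lem:Linnik} type together with \cref{thm:GZFR}, which forces every zero of $L(s,\chi_0)$ other than $\rho_0$ to lie at distance $\gg 1/\log(q_0(|\gamma_0|+2))$ from $s_0$; the tail $\sum_{|s_0-\rho|>R}z_\rho^{\,j}$ and all the analogous zero‑sums in Step 1 are bounded in absolute value using $\sum_\rho|s_0-\rho|^{-2}\ll\log(q_\psi(|\gamma_0|+2))$, \cref{lem:RH} for $\zeta$, and \cref{thm:GZFR} together with \cref{cor:Page}/\cref{lem:Landau} to exclude a competing exceptional zero for any of $\chi_1,\chi_0\chi_1,\chi_0^2,\chi_0^2\chi_1$. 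Substituting the displayed lower bound into Step 1 with $k=j$ and collecting errors yields
\[
\frac{c_1}{(\sigma-\beta_0)^{j}}\;\leq\;\frac{3}{(\sigma-1)^{j}}-\frac{3}{(\sigma-\beta_1(Q))^{j}}+\mathcal E_{j},\qquad c_1:=\frac{4\epsilon}{32+4\epsilon},
\]
with $\mathcal E_{j}$ explicitly bounded.

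\emph{Step 3 (extracting $\log(1/\lambda_1)$, optimizing; the main obstacle).} Write $\sigma=1+\delta$. Using $1-(1+x)^{-j}\leq jx$ for $x\geq 0$,
\[
\frac{3}{(\sigma-1)^{j}}-\frac{3}{(\sigma-\beta_1(Q))^{j}}=\frac{3}{\delta^{j}}\Bigl(1-\bigl(1+\tfrac{\lambda_1}{\delta}\bigr)^{-j}\Bigr)\leq\frac{3j\lambda_1}{\delta^{j+1}},
\]
so, absorbing $\mathcal E_{j}$ into a further $\tfrac{3j\lambda_1}{\delta^{j+1}}$, one gets $(\sigma-\beta_0)^{j}\geq c_1\delta^{j+1}/(6j\lambda_1)$ and hence
\[
1-\beta_0=(\sigma-\beta_0)-\delta\;\geq\;\delta\Bigl(\bigl(\tfrac{c_1\delta}{6j\lambda_1}\bigr)^{1/j}-1\Bigr)\;\geq\;\frac{\delta}{j}\Bigl(\log\tfrac{1}{\lambda_1}-\log\tfrac{6j}{c_1\delta}\Bigr).
\]
With $j\ll\log\bigl(q_0q_1(|\gamma_0|+2)\bigr)\ll\log Q+\log T$, a choice of $\delta$ and of the slack parameters $\epsilon,R$ then reduces this, after simplification, to the claimed
\[
1-\beta_0\;\geq\;\frac{\log\bigl(1/[(1-\beta_1(Q))\,W]\bigr)}{D},\qquad W=670564.676+347029.502\log Q+107906.278\log T,\quad D=104.645+54.156\log Q+16.84\log T.
\]
The entire difficulty is quantitative: executing Steps~1--3 with completely explicit constants. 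The delicate points are (a) bounding $\mathcal E_{j}$ and $\widetilde G_{j}$ uniformly in the Tur\'an exponent $j$ — the worst contributions being those of the auxiliary characters $\chi_0\chi_1,\chi_0^2,\chi_0^2\chi_1$ of conductor up to $\sim Q^2$, for which one must invoke \cref{cor:Page}/\cref{lem:Landau}, and the small‑$|\gamma_0|$ and quadratic‑$\chi_0$ degeneracies; (b) making the zero count near $s_0$ explicit enough to pin the admissible range of $j$; and (c) the multi‑parameter optimization producing the stated $W$ and $D$, including the careful delineation of the range of $1-\beta_1(Q)$ in which the repulsion argument genuinely improves on \cref{thm:GZFR} and the complementary range where the conclusion follows from \cref{thm:GZFR} directly.
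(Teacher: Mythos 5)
Your overall skeleton (positivity kernel involving $\chi_1$, a Tur\'an-type power-sum lower bound, pole--exceptional-zero cancellation producing the factor $1-\beta_1(Q)$, then solving for $\beta$) is the right family of ideas, but Step~2/Step~3 contains a structural gap that the sketch cannot repair by bookkeeping. You apply \cref{cor:turan3} only to the zeros of $L(s,\chi_0)$ near $s_0$, and you propose to bound the remaining zero-sums (for $\zeta$, $\chi_1\setminus\{\beta_1\}$, $\chi_0\chi_1$, $\chi_0^2$, $\chi_0^2\chi_1$) \emph{in absolute value} and then ``absorb $\mathcal E_j$ into a further $3j\lambda_1/\delta^{j+1}$.'' Those auxiliary families are only known to avoid the classical zero-free region of \cref{thm:GZFR}, of width about $0.104/\log(QT)$, so a single such zero can contribute a term of size $(\delta+0.104/\log(QT))^{-j}$ to $\mathcal E_j$; this carries no factor of $\lambda_1=1-\beta_1(Q)$, so the proposed absorption is false once $\lambda_1$ is small---which is exactly the regime where the theorem has content. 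Nor can you absorb $\mathcal E_j$ into the detected main term $c_1(\sigma-\beta_0)^{-j}$: in the interesting range the conclusion must allow $1-\beta_0$ as large as roughly $\log(1/\lambda_1)/(54\log Q)$, which far exceeds $0.104/\log(QT)$ for large $Q$, so the contaminating term beats the main term by the exponential factor $\bigl((\delta+1-\beta_0)/(\delta+0.104/\log QT)\bigr)^{j}$ when the Tur\'an exponent $j$ is large, while for small $j$ no $\log(1/\lambda_1)$ gain can be extracted. You also cannot simply discard these sums by positivity, since for $j\geq 2$ the real part of $\sum_{\rho'}(\sigma+i\cdot\mathrm{shift}-\rho')^{-j}$ need not be nonnegative (only $j=1$ enjoys that, which is why the classical zero-free-region argument works but does not give repulsion). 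Knowing these other zeros are themselves repelled is precisely what is being proved, so invoking it would be circular.

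The paper's proof is organized to avoid exactly this trap: it forms a single Dirichlet series $D(s)=\zeta(s)L(s,\chi_1)L(s+i\gamma,\chi)L(s+i\gamma,\chi_1\chi)$ whose coefficients $a(n)=(1+\chi_1(n))(1+\chi(n)n^{-i\gamma})\Lambda(n)$ have nonnegative real part, takes odd-order derivatives at $\alpha+1$ so every zero enters through $(\alpha+1-\omega)^{-2j}$, extracts only the poles and the exceptional zero (their near-cancellation yields the upper bound $4j(1-\beta_1(Q))/\alpha^{2j+1}$), and then applies the power-sum theorem (\cref{lem:turan2}/\cref{cor:turan3}) to the \emph{entire} remaining zero sequence with $z_1=(\alpha+1-\beta)^{-2}$. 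In that arrangement no $j$-th power sum over a competing zero family ever needs to be bounded; the only quantity requiring estimation is the first-moment sum $\mathcal M=|z_1|^{-1}\sum_n|z_n|$, which is genuinely controlled by $\sum_\rho\re\frac{1}{s-\rho}$-type bounds (the cited lemmas of Thorner--Zaman), after which the explicit constants come from choosing $\epsilon=\tfrac1{40}$, $\alpha=23.22$. If you want to keep your six-character $3{+}4\cos\theta{+}\cos2\theta$ kernel, the fix is the same: put all zeros of all six factors into one weighted power sum (weights $3,3,4,4,1,1$ via \cref{lem:turan2} with $b_1=4$), rather than isolating the $\chi_0$-zeros and treating the rest as an error term.
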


\begin{remark}
Our proof produces a stronger result than we have stated.  For numerical convenience, we assign specific values to the parameters $\epsilon$ and $\alpha$ occurring in our proof below.
\end{remark}

\begin{remark}
Jutila \cite[Theorem 2]{Jutila} proved a numerically stronger result under the assumption that $QT$ is sufficiently large (in terms of a parameter $\epsilon$ occurring in Jutila's proof but not ours).  To our knowledge, \cref{thm:DH} is the only explicit result of its type with no such hypotheses.
\end{remark}

\begin{proof}
The result is trivial when
\[
\beta_1(Q)\leq 1-\frac{1}{670\,564.676 + 347\,029.502 \log Q + 107\,906.278\log T},
\]
so we assume otherwise.  Let $1\leq q\leq Q$ be an integer, and let $\chi\pmod{q}$ be a primitive Dirichlet character.  Let $\beta+i\gamma\neq\beta_1(Q)$ be a nontrivial zero of $L(s,\chi)$ with $\beta>\frac{1}{2}$ and $|\gamma|\leq T$, and let $\psi\pmod{q_{\psi}}$ be the primitive character that induces $\chi_1\chi$.  Note that $q_{\psi}$ divides the least common multiple $[q,q_1]$.  Let $\omega$ denote a zero of the function
\[
D(s):=\zeta(s)L(s,\chi_1)L(s+i\gamma,\chi)L(s+i\gamma,\chi_1\chi)=\zeta(s)L(s,\chi_1)L(s+i\gamma,\chi)L(s+i\gamma,\psi)\prod_{p|[q,q_1]}\Big(1-\frac{\psi(p)}{p^{s+i\gamma}}\Big).
\]
There exists numbers $a_D,b_D\in\mathbb{C}$ such that
\[
(s-1)(s-1+i\gamma)^{\delta_{\chi}+\delta_{\psi}}D(s) = s^{\mathop{\mathrm{ord}}_{s=0}D(s)} e^{a_D+b_D s}\prod_{\substack{\omega\neq 0 \\ D(\omega) = 0}}\Big(1-\frac{s}{w}\Big)e^{\frac{s}{\omega}}.
\]
Let $\Lambda(n)$ be the von Mangoldt function, and define $a(n):=(1+\chi_1(n))(1+\chi(n)n^{-i\gamma})\Lambda(n)$.  By construction, we have the uniform lower bound $\re(a(n))\geq 0$.  Also note that if $\re(s)>1$, then
\[
-\frac{D'}{D}(s) = \sum_{n=1}^{\infty}\frac{a(n)\Lambda(n)}{n^s}.
\]

Let $\alpha\geq 1$.  For an integer $j\geq 1$, the $(2j-1)$-th derivative at $s=\alpha+1$ is
\[
\frac{1}{(2j-1)!}\sum_{n=1}^{\infty}\frac{a(n)\Lambda(n)(\log n)^{2j-1}}{n^{\alpha+1}}=\frac{1}{\alpha^{2j}}+\frac{\delta_{\chi}+\mathbf{1}_{\psi=1}}{(\alpha+i\gamma)^{2j}}-\sum_{\omega}\frac{1}{(\alpha+1-\omega)^{2j}}.
\]
We extract the contribution from $\beta_1(Q)$, arriving at
\begin{multline*}
\frac{1}{(2j-1)!}\sum_{n=1}^{\infty}\frac{a(n)\Lambda(n)(\log n)^{2j-1}}{n^{1+\alpha}}\\
=\frac{1}{\alpha^{2j}}+\frac{\delta_{\chi}+\delta_{\psi}}{(\alpha+i\gamma)^{2j}}-\frac{1}{(\alpha+1-\beta_1(Q))^{2j}}-\frac{\delta_{\chi}+\delta_{\psi}}{(\alpha+1-\beta_1(Q)+i\gamma)^{2j}}-\sum_{n=1}^{\infty}z_n^j,	
\end{multline*}
where $z_n = z_n(\gamma)$ runs through the (multi)sets
\begin{align*}
	&\{(1+\alpha-\omega)^{-2}\colon \zeta(\omega)=0\},\\
	&\{(1+\alpha-\omega)^{-2}\colon \omega\neq\beta_1(Q)\textup{ and }L(\omega,\chi_1)=0\},\\
	&\{(1+\alpha+i\gamma-\omega)^{-2}\colon \omega\neq\beta_1(Q)\textup{ and }L(\omega,\chi)=0\},\\
	&\{(1+\alpha+i\gamma-\omega)^{-2}\colon \omega\neq\beta_1(Q)\textup{ and }L(\omega,\psi)=0\},\\
	&\{(1+\alpha+i\gamma+(h+2k)\pi i/\log p)^{-2}\colon h\in\{0,1\},~k\in\Z,~p\nmid q_{\psi}\},
\end{align*}
arranged so that $z_1 = (\alpha+1+i\gamma-(\beta+i\gamma))^{-2}=(\alpha+1-\beta)^{-2}$.  Since $\re(a(n))\geq 0$, it follows that
\[
\re\Big(\frac{1}{(2j-1)!}\sum_{n=1}^{\infty}\frac{a(n)\Lambda(n)(\log n)^{2j-1}}{n^{1+\alpha}}\Big)\geq 0.
\]
Since $\delta_{\chi}+\mathbf{1}_{\psi=1}\in\{0,1\}$, it follows that
\begin{align*}
\re\Big(\sum_{n=1}^{\infty}z_n^j\Big)&\leq \frac{1}{\alpha^{2j}}-\frac{1}{(\alpha+1-\beta_1(Q))^{2j}}+(\delta_{\chi}+\mathbf{1}_{\psi=1})\re\Big(\frac{1}{(\alpha+i\gamma)^{2j}}-\frac{1}{(\alpha+1-\beta_1(Q)+i\gamma)^{2j}}\Big)\\
&=\frac{1}{\alpha^{2j}}-\frac{1}{(\alpha+1-\beta_1(Q))^{2j}}+(\delta_{\chi}+\mathbf{1}_{\psi=1})\re\Big(\frac{1}{(\alpha+i\gamma)^{2j}}\Big(1-\frac{1}{(1+\frac{1-\beta_1(Q)}{\alpha+i\gamma})^{2j}}\Big)\Big)\\
&\leq \frac{1}{\alpha^{2j}}\Big(1-\frac{1}{(1+\frac{1-\beta_1(Q)}{\alpha})^{2j}}+\Big|1-\frac{1}{(1+\frac{1-\beta_1(Q)}{\alpha+i\gamma})^{2j}}\Big|\Big).
\end{align*}
Since
\[
\Big|1-\frac{1}{(1+\frac{1-\beta_1(Q)}{\alpha+i\gamma})^{2j}}\Big|=\frac{2j}{|\alpha+i\gamma|}\Big|\int_{\beta_1(Q)}^1 \frac{1}{(1+\frac{1-u}{\alpha+i\gamma})^{2j+1}}du\Big|\leq \frac{2j(1-\beta_1(Q))}{|\alpha+i\gamma|}\leq \frac{2j(1-\beta_1(Q))}{\alpha},
\]
regardless of whether or not $\gamma=0$, we find that
\begin{equation}
	\label{eqn:DH_upper}
	\re\Big(\sum_{n=1}^{\infty}z_n^j\Big)\leq \frac{4j(1-\beta_1(Q))}{\alpha^{2j+1}}.
\end{equation}

We will apply \cref{cor:turan3} to bound the left hand side of \eqref{eqn:DH_upper} from below.  Since $\beta>\frac{1}{2}$, we have that $|z_1|\geq (\alpha+\frac{1}{2})^{-2}$, so $\mathcal{M}:=|z_1|^{-1}\sum_{n=1}^{\infty}|z_n|$ is at most
\begin{multline*}
\Big(\alpha+\frac{1}{2}\Big)^2\Big(\sum_{\zeta(\omega)=0}\frac{1}{|\alpha+1-\omega|^{2}}+\sum_{\substack{\omega\neq\beta_1(Q) \\ L(\omega,\chi_1)=0}}\frac{1}{|\alpha+1-\omega|^2}+\sum_{\substack{\omega\neq\beta_1(Q) \\ L(\omega,\chi)=0}}\frac{1}{|\alpha+1+i\gamma-\omega|^2}\\
+\sum_{\substack{\omega\neq\beta_1(Q) \\ L(\omega,\psi)=0}}\frac{1}{|\alpha+1+i\gamma-\omega|^2}+\sum_{\substack{p|[q,q_1] \\ p\nmid q_{\psi}}}\sum_{h=0}^{1}\sum_{k=-\infty}^{\infty}\frac{1}{|\alpha+1+i\gamma+\frac{(h+2k)\pi i}{\log p}|^2}\Big).
\end{multline*}
We apply \cite[Lemma 7.1]{TZ1} and its proof (with $K=\mathbb{Q}$) to bound the contribution not arising from the nontrivial zeros; this contribution is at most
\[
4\Big(\frac{1}{2(\alpha+1)}+\frac{1}{(\alpha+1)^2}\Big)+\Big(\frac{1}{2(\alpha+1)}+\frac{2}{(1+\alpha)^2\log 2}\Big)\log([q,q_1]).
\]
We apply \cite[Equation (7--2)]{TZ1} (with $K=\mathbb{Q}$, which ensures that $\psi$ is quadratic) to bound the contribution arising from the nontrivial zeros.  In their proof (with $K=\Q$), they have $Q=\max_{\chi}q_{\chi}$ so that $q,q_1\leq Q$ and $[q,q_1]\leq Q^2$.  These simplifying estimates are straightforward to reverse.  We conclude (using the bound $\delta_{\chi}+\mathbf{1}_{\chi_1\chi=1}\leq 1$) that
\begin{align*}
&\sum_{\zeta(\rho)=0}\frac{1}{|1+\alpha-\rho|^2}+\sum_{L(\rho,\chi_1)=0}\frac{1}{|\alpha+1-\rho|}+\sum_{L(\rho,\chi)=0}\frac{1}{|\alpha+i+i\gamma-\rho|}+\sum_{L(\rho,\psi)=0}\frac{1}{|1+\alpha+i\gamma-\rho|}
\\
&\leq \frac{1}{\alpha}\re\Big(\sum_{\zeta(\rho)=0}\frac{1}{1+\alpha-\rho}+\sum_{L(\rho,\chi_1)=0}\frac{1}{\alpha+1-\rho}+\sum_{L(\rho,\chi)=0}\frac{1}{\alpha+i+i\gamma-\rho}+\sum_{L(\rho,\psi)=0}\frac{1}{1+\alpha+i\gamma-\rho}\Big)\\
&\leq \frac{1}{\alpha}\Big(\frac{1}{2}\log(q_1 q [q,q_1])+\frac{\log([q,q_1])}{2^{\alpha+1}-1}+\log(\alpha+2+|\gamma|)+\log(\alpha+2)+\frac{3}{1+\alpha}+\frac{1}{2^{\alpha+1}-1}-2\log \pi\\
&+\frac{1}{\alpha}+\re\Big(\frac{\delta_{\chi}+\mathbf{1}_{\chi_1\chi=1}}{\alpha+it}+\frac{\delta_{\chi}+\mathbf{1}_{\chi_1\chi=1}}{\alpha+1+it}\Big)\Big)\\
&\leq \frac{1}{\alpha}\Big(\frac{1}{2}\log(q_1 q [q,q_1])+\frac{\log([q,q_1])}{2^{\alpha+1}-1}+\log T+2\log(\alpha+2)+\frac{6}{\alpha}+\log 2-2\log \pi\Big)
\end{align*}
When $\alpha\geq 12.8$, the above estimates (with $q,q_1\leq Q$ and $[q,q_1]\leq Q^2$) together imply that
\begin{equation}
\label{eqn:DH_M_bound}
\mathcal{M}\leq \frac{(\alpha+\frac{1}{2})^2}{\alpha}\Big(\frac{1}{2}\log(Q^4 T^2)+\Big(\frac{\alpha}{\alpha+1}+\frac{4\alpha}{(1+\alpha)^2\log 2}+\frac{2}{2^{\alpha+1}-1}\Big)\log Q+2\log(\alpha+2)+\frac{1}{\alpha}\Big).
\end{equation}

By \cref{cor:turan3} and a Taylor expansion argument, there exists $j_0\in[1,(8+\epsilon)\mathcal{M}]$ such that
\[
\re\Big(\sum_{n=1}^{\infty}z_n^{j_0}\Big)\geq \frac{\epsilon}{32+4\epsilon}|z_1|^{j_0}= \frac{\epsilon}{32+4\epsilon}\cdot\frac{1}{(\alpha+1-\beta)^{2j_0}}\geq \frac{\epsilon}{32+4\epsilon}\alpha^{-2j_0}e^{-\frac{2j_0}{\alpha}(1-\beta)}.
\]
We deduce from this and \eqref{eqn:DH_upper} the bound $\frac{\epsilon}{32+4\epsilon}e^{-2j_0(1-\beta)/\alpha}\leq 4j_0(1-\beta_1(Q))/\alpha$.  Since $j_0\in[1,(8+\epsilon)\mathcal{M}]$, we rearrange to solve for $\beta$, arriving at the bounds
\[
\beta\leq 1-\frac{\log(\frac{\epsilon}{(1-\beta_1(Q))16(8+\epsilon)^2\frac{\mathcal{M}}{\alpha}})}{2(8+\epsilon)\frac{\mathcal{M}}{\alpha}},\qquad 1\leq \frac{16(8+\epsilon)^2}{\epsilon}(1-\beta_1(Q))\frac{\mathcal{M}}{\alpha}e^{2(8+\epsilon)\frac{\mathcal{M}}{\alpha}(1-\beta)}.
\]
These two bounds imply the desired results once we use \eqref{eqn:DH_M_bound} with $\epsilon=\frac{1}{40}$ and $\alpha=23.22$ (and for the second part, we use $T=Q>400\,000$).
\end{proof}

\section{Notation, conventions, and the key optimization}
\label{sec:notation}

Throughout our proof of \cref{thm:GLFZDE}, we use the following notation and conventions that are crucial to our proofs:

\begin{enumerate}[(i)]
	\item $T\geq 1$, $Q\geq 1$, $1\leq q\leq Q$
	\item $\mathcal{L}=\log(QT)$,
	\item $\tau\in\R$ with $|\tau|\leq T$,
	\item $\alpha = 7.931\,643\,766\,252\,22\ldots$ (decimal expansion of a number whose origin is explained below),
	\item $A=3.907\,668\,327\,378\,39\ldots$  (decimal expansion of a number whose origin is explained below),
	\item $R=\sqrt{A^2+1}$,
	\item $\chi\pmod{q}$ a primitive Dirichlet character,
	\item $\frac{1}{3A\mathcal{L}}\leq \eta\leq \frac{1}{10A}$,
	\item $s_0=1+\eta+i\tau$,
	\item $\max\{Q,T\}>10^4$,
	\item $\delta_{\chi}$ the indicator function of the trivial primitive character,
	\item if $\delta_{\chi}=1$, then $3\times 10^{12}\leq |\tau|\leq T$,
	\item $\mathbf{1}_{I}(t)$ the indicator function of a subinterval $I\subseteq\R$,
	\item $\mathcal{N}_{\eta}=A\eta(1+10^{-7})^{-1}(\frac{2}{3}\mathcal{L}+13.04)+9$,
	\item $M_{\eta}=(\alpha-1)\mathcal{N}_{\eta}$, which is at least $63.9$
	\item $k\in[M_{\eta},M_{\eta}+\mathcal{N}_{\eta}-1]\cap\Z\subseteq [M_{\eta},\frac{\alpha}{\alpha-1}M_{\eta}]\cap\Z$, which implies that $k\geq 64$,
	\item $V=2(4e\alpha)^{1/(\alpha-1)}+0.38=4.184\,416\,849\ldots$,
	\item $A_0 = 1/(eV)=0.087\,916\,537\,56\ldots$,
	\item $A_{1} >2$ satisfies $V^{-1}=A_{1}  e^{1-\frac{A_{1} (\alpha-1)}{2\alpha}}$, so $A_{1}  = 11.065\,510\,190\ldots$,
	\item $N_{\eta} = \exp(A_{0} M_{\eta}/\eta)$,
	\item $N_{\eta}^{*}  = \exp(A_{1}  M_{\eta}/\eta)$,
	\item $\xi=1+10^{-7}$.
\end{enumerate}
\noindent When we recall one of these definitions or conventions, we will identify them as (i), (ii), (iii), etc.

The constants that arise in \cref{cor:Linnik_lemma} (see (xiv)) and the numbers $\alpha$ in (iv) and $A$ in (v) are central to these definitions and conventions.  Our values of $\alpha$ and $A$ arise as the values that
\begin{equation}
\label{eqn:minimization_problem}
\textup{minimize $(4e\alpha 2^{\alpha-1})^A$ with the constraints $\alpha > 1$, $A>1$, and $4e\alpha\Big(\frac{2}{\sqrt{A^2+1}}\Big)^{\alpha-1}=\frac{2}{3}$.}
\end{equation}
This optimization problem serves as the basis for the zero detection process in \cref{sec:zero_detection} (see \eqref{eqn:lowerbound1} below).  We could repeat our analysis with $\frac{2}{3}$ replaced by any fixed constant $0<\delta<1$, and the ensuing values of $\alpha$ and $A$ would depend on $\delta$.  As $\delta\to 1^-$, these new values of $\alpha$ and $A$ would lead to a versions of \cref{thm:GLFZDE} with worse implied constants and better exponents.  As $\delta\to 0^+$, the implied constants are better and the exponents are worse.  One cannot take $\delta$ to be too small because of considerations arising from our eventual use of the ``pre-sifted'' large sieve inequality in \cref{lem:large_sieve}.  Our choice of $\delta=\frac{2}{3}$ is a convenient middle ground between these competing interests that does not affect the quality of the power savings in \cref{thm:Sarkozy}.

The optimal choice of $\delta$ depends on the specific setting to which \cref{thm:GLFZDE} is applied.  For any fixed value $0<\delta<1$, our method will produce an explicit zero density estimate, so the interested reader can follow our ideas with the values of $\alpha$ and $A$ arising from a different value of $\delta$ and arrive at explicit versions of \eqref{eqn:Gallagher} and \eqref{eqn:Bombieri} with different constants.  We do not carry the dependence on $\delta$ throughout our proofs for clarity of exposition.

There is some flexibility in our choices.  We give two examples.  For reasons that will naturally arise in our proof, it is important that $V$ in (xvii) be greater than $2(4e\alpha)^{1/(\alpha-1)}$.  A large value of $V$ inflates the implied constants in \cref{thm:GLFZDE}, but a small value of $V$ that is too close to $2(4e\alpha)^{1/(\alpha-1)}$ will force us to needlessly inflate $\mathcal{N}_{\eta}$.  Our value of $V$ is convenient; there is no canonical choice.  All that matters for $\xi$ is that it is greater than 1, and the quality of the exponent (resp. the implied constant) in \cref{thm:GLFZDE} improves (resp. worsens) as $\xi\to 1^+$.

\section{The main zero detection result}
\label{sec:zero_detection}

The reader is strongly encouraged to frequently consult the notation and conventions in \cref{sec:notation}.  We proceed to detect the zeros of $L(s,\chi)$ that satisfy the following hypothesis.

\begin{hypothesis}
	\label{hyp}
Recall the notation and conventions in \cref{sec:notation}. There exists a nontrivial zero $\rho_0$ of $L(s,\chi)$ such that $|1+i\tau-\rho_0|\leq \eta$.
\end{hypothesis}

We believe that \cref{hyp} is false.  Indeed, it contradicts GRH.  Our main result en route to \cref{thm:GLFZDE} is the following zero detection result.

\begin{theorem}
\label{thm:detect}
Recall the notation and conventions in \cref{sec:notation}.  If \cref{hyp} is true, then
\[
1\leq \frac{\eta^3}{200}M_{\eta} e^{2.672\,32M_{\eta}}\int_{N_{\eta}}^{N_{\eta}^{*} }\Big|\sum_{N_{\eta}< p\leq u}\frac{\chi(p)\log p}{p^{1+i\tau}}\Big|^2 \frac{du}{u}.
\]
\end{theorem}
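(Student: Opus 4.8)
\textbf{Proof proposal for \cref{thm:detect}.}

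The plan is to follow the Turán power-sum method (cf. \cref{lem:turan2}, \cref{cor:turan3}) applied to the family of zeros $\rho$ lying near $1+i\tau$, turning the existence of the ``bad'' zero $\rho_0$ from \cref{hyp} into a lower bound for a high moment of the logarithmic derivative $-\frac{L'}{L}(s_0,\chi)$, then converting that into the integral over a Dirichlet polynomial via a Perron/Mellin smoothing. First I would differentiate the Hadamard expansion \eqref{eqn:Hadamard} many times at $s=s_0=1+\eta+i\tau$: for an integer $k$, the $k$-th derivative of $-\frac{L'}{L}$ is essentially $\sum_\rho (s_0-\rho)^{-(k+1)}$ up to the explicit $\Gamma$-factor and $\delta_\chi$ terms, which are negligible against the zero sum once $k$ is large (using \cref{lem:ono_sound} and the trivial bound on $-\frac{\zeta'}{\zeta}(1+\eta)$ exactly as in the proof of \cref{lem:Linnik}). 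Writing $z_\rho = (s_0-\rho)^{-1}$ and isolating $z_{\rho_0}$ as the dominant term (it has $|z_{\rho_0}|\geq \eta^{-1}$ by \cref{hyp}, while all other zeros contribute $|z_\rho|\leq$ something strictly smaller once we restrict the count), I would apply \cref{cor:turan3}. The cardinality parameter $\mathcal M = |z_{\rho_0}|^{-1}\sum_\rho |z_\rho|$ is controlled by \cref{cor:Linnik_lemma}: the zeros within distance $r$ of $1+i\tau$ number at most $A\eta(1+10^{-7})^{-1}(\tfrac23\mathcal L+13.04)+2$, and summing dyadically over shells gives $\mathcal M \ll \mathcal N_\eta$; this is precisely why $\mathcal N_\eta$ in (xiv) and $M_\eta=(\alpha-1)\mathcal N_\eta$ in (xv) are defined as they are, and why $k$ is chosen in the range $[M_\eta, M_\eta+\mathcal N_\eta-1]$ as in (xvi) — so that \cref{cor:turan3} (with $\epsilon$ small) produces an index $j=k$ in exactly this window with $\re\sum_\rho z_\rho^{k} \geq c\,|z_{\rho_0}|^{k} \geq c\,\eta^{-k}e^{-\text{(something)}\cdot k}$.

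Next I would pass from the zero sum back to an arithmetic object. Since $-\frac{L'}{L}(s,\chi)=\sum_n \Lambda(n)\chi(n)n^{-s}$ for $\re(s)>1$, the $k$-th derivative at $s_0$ is (up to factorials and sign) $\sum_n \Lambda(n)\chi(n)(\log n)^{k} n^{-s_0}$. The Turán bound then says this weighted prime sum is large. To produce the \emph{integral over a truncated Dirichlet polynomial} in the statement, I would smooth: replace $(\log n)^k$ by an integral of $n^{-w}$-type kernels, i.e. use an identity like $(\log n)^k/k! = \frac{1}{2\pi i}\int n^w w^{-k-1}dw$, or more cleanly bound $(\log n)^k e^{-\eta \log n}$ pointwise and then recognize that the mass of this weight is concentrated on $n$ with $\log n \approx k/\eta$; the two cutoffs $N_\eta = \exp(A_0 M_\eta/\eta)$ and $N_\eta^* = \exp(A_1 M_\eta/\eta)$ in (xx)–(xxi) are designed (via the constants $A_0=1/(eV)$ and $A_1$ solving $V^{-1}=A_1 e^{1-A_1(\alpha-1)/(2\alpha)}$) precisely so that, outside $[N_\eta, N_\eta^*]$, the tail of $\sum_n \Lambda(n)(\log n)^k n^{-1-\eta}$ is a negligible fraction of the main term. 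After restricting to $N_\eta<p\le N_\eta^*$ (primes only, since prime powers contribute $O(\sqrt{\cdot})$), partial summation in $u$ converts $\sum_{N_\eta<p\le N_\eta^*}\frac{\chi(p)\log p}{p^{1+i\tau}}(\log p)^{k-1}\cdot(\text{weight})$ into $\int_{N_\eta}^{N_\eta^*} \big(\sum_{N_\eta<p\le u}\frac{\chi(p)\log p}{p^{1+i\tau}}\big)\cdot(\text{kernel}'(u))\,\frac{du}{u}$; applying Cauchy–Schwarz (with the $L^1$-normalized kernel) yields the squared Dirichlet polynomial and the factor $\int \big|\sum_{N_\eta<p\le u}\cdots\big|^2\frac{du}{u}$. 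Bookkeeping all the explicit constants — the $e^{2.67232 M_\eta}$ comes from $e^{2k/\alpha}$-type losses (recall $k\le \frac{\alpha}{\alpha-1}M_\eta$ so $k/\eta$ and hence $\log N_\eta^*$ are linear in $M_\eta/\eta$) together with the Turán denominator $\frac{\epsilon}{32+4\epsilon}$ and the factor $\frac{\eta^3}{200}M_\eta$ — gives the stated inequality $1\le \frac{\eta^3}{200}M_\eta e^{2.67232 M_\eta}\int_{N_\eta}^{N_\eta^*}|\cdots|^2\frac{du}{u}$.

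The main obstacle, and where essentially all the work lies, is the explicit bookkeeping in two places: (a) making the ``negligible'' claims genuinely quantitative — that the $\Gamma$-term, the $\delta_\chi$-term, the prime-power contribution, and the tails of the $(\log n)^k$-weighted sum outside $[N_\eta,N_\eta^*]$ are each a small enough fraction of $c\,\eta^{-k}$ that they can be absorbed, which forces the specific inequalities $M_\eta\ge 63.9$, $k\ge 64$ in (xv)–(xvi), and the precise form of $V$, $A_0$, $A_1$ in (xvii)–(xxi); and (b) verifying that the optimization \eqref{eqn:minimization_problem} — minimizing $(4e\alpha 2^{\alpha-1})^A$ subject to $4e\alpha(2/\sqrt{A^2+1})^{\alpha-1}=\tfrac23$ — is the right one, i.e. that the constant $\tfrac23$ matching the Dirichlet-polynomial-side constant $\tfrac23\mathcal L+13.04$ from \cref{cor:Linnik_lemma} is exactly what makes the exponents in the final count come out, and that $\alpha=7.93\ldots$, $A=3.91\ldots$ in (iv)–(v) are its solution. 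Everything else — differentiating \eqref{eqn:Hadamard}, invoking \cref{cor:turan3}, Cauchy–Schwarz, partial summation — is routine; the difficulty is purely in keeping every explicit constant under control simultaneously so that the clean inequality in the statement emerges with the stated numerical values.
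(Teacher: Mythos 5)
Your overall architecture matches the paper's: differentiate \eqref{eqn:Hadamard} $k$ times at $s_0$, use a power-sum theorem to show that under \cref{hyp} the sum over zeros near $1+i\tau$ is large, then expand $\frac{\eta^{k+1}}{k!}(\frac{L'}{L})^{(k)}(s_0,\chi)$ as a Dirichlet series weighted by $j_k(\eta\log n)=e^{-\eta\log n}(\eta\log n)^k/k!$, discard $n\notin(N_{\eta},N_{\eta}^{*}]$ and prime powers using the choices of $A_0,A_1,V$, and finish with partial summation and Cauchy--Schwarz. However, the power-sum input you invoke is the wrong one, and the error is not cosmetic. \cref{cor:turan3} (from \cref{lem:turan2}) only guarantees an exponent $j$ with $1\leq j\leq (8+\epsilon)\mathcal{M}$; it gives no lower bound on $j$ beyond $1$, and it certainly does not ``produce an index $j=k$ in exactly this window'' $[M_{\eta},M_{\eta}+\mathcal{N}_{\eta}-1]$ as you assert. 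The paper uses that result only for the Deuring--Heilbronn repulsion (\cref{thm:DH}); for zero detection it uses the S{\'o}s--Tur{\'a}n/Kolesnik--Straus theorem (\cref{lem:turan}), whose whole point is that the exponent can be forced into a prescribed window $[M+1,M+N]$, at the cost of the factor $(4e(1+\frac{M}{N}))^{-N}$ --- and it is precisely this factor, with $M=M_{\eta}=(\alpha-1)\mathcal{N}_{\eta}$ and $N\leq\mathcal{N}_{\eta}$, that produces the $(4e\alpha)^{-\mathcal{N}_{\eta}}$ loss and hence the optimization \eqref{eqn:minimization_problem} you correctly identify as central.

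The prescribed window is indispensable, not a convenience. The lower bound $k\geq M_{\eta}$ is what makes (i) the zeros with $|1+i\tau-\rho|>A\eta$ negligible: their contribution is of size $(\eta\mathcal{L}+2)R^2/(2(R\eta)^{k+1})$ against a detected term of size roughly $2^{-(k+1)}(4e\alpha)^{-\mathcal{N}_{\eta}}$ after scaling by $\eta^{k+1}$, and beating $(4e\alpha)^{\mathcal{N}_{\eta}}$ by $(R/2)^{k+1}$ forces $k$ to be at least a fixed multiple of $\mathcal{N}_{\eta}$; and (ii) the weight $j_k(\eta\log n)$ concentrated where $\log n\asymp k/\eta$, so that $N_{\eta}=\exp(A_0M_{\eta}/\eta)$ and $N_{\eta}^{*}=\exp(A_1M_{\eta}/\eta)$ actually capture its mass (both the lower bound $k\geq M_{\eta}$ and the upper bound $k\leq\frac{\alpha}{\alpha-1}M_{\eta}$ are used here). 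With only $j\leq(8+\epsilon)\mathcal{M}$ available, $j$ could equal $1$, and then neither the far-zero contribution nor the tails outside $[N_{\eta},N_{\eta}^{*}]$ are negligible, so the stated inequality cannot be extracted. (A secondary point: applying \cref{cor:turan3} to first powers of $(s_0-\rho)^{-1}$ over all zeros makes $\mathcal{M}$ divergent, so you would have to truncate to the near zeros anyway and would still lack the needed lower bound on the exponent; also $|z_{\rho_0}|\geq(2\eta)^{-1}$, not $\eta^{-1}$, and other zeros may well have larger modulus than $z_{\rho_0}$.) The fix is simply to substitute \cref{lem:turan}, as the paper does; with that change, your outline of the upper-bound side (the $V^{-k}$ tails, the prime-power bound, the derivative bound $|\frac{d}{du}j_k(\eta\log u)|\leq\frac{\eta}{200u}$, and Cauchy--Schwarz over $u\in[N_{\eta},N_{\eta}^{*}]$) is essentially the paper's argument.
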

\begin{remark}
We remind the reader that if $\chi=1$, then one must have $3\times 10^{12}\leq|\tau|\leq T$ per (xii); otherwise, \cref{hyp} is false by \cref{lem:RH}.  If $\chi\neq 1$, then one must have $\max\{q,T\}>10^4$ per (x); otherwise, \cref{hyp} is false by \cref{lem:Platt}.
\end{remark}

Our proof relies on another lower bound for power sums, different from \cref{lem:turan2}.  The result that we cite, due to Kolesnik and Strauss, descends from original arguments of S{\'o}s and Tur{\'a}n \cite{Turan}.

\begin{lemma}[\cite{KolesnikStraus}]
	\label{lem:turan}
	Let $M\geq 0$ and $N\geq 1$ be integers, and let $z_1,\ldots,z_N\in\Z$.  There exists an integer $k\in[M+1,M+N]$ such that
	\[\
	|z_1^k+\cdots+z_N^k|\geq 1.007\Big(\frac{1}{4e(1+\frac{M}{N})}\Big)^N |z_1|^k.
	\]
\end{lemma}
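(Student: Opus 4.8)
Since this is a standard inequality from the theory of power sums, the plan is to cite it directly from Kolesnik and Straus \cite{KolesnikStraus}; it is the sharp, fully explicit form of the ``second main theorem'' of the power‑sum method of S\'os and Tur\'an \cite{Turan} (the refinement being the numerical factor $1.007$ together with the constant $\tfrac1{4e}$ in place of the classical $\tfrac1{2e}$). If a self‑contained proof were wanted, the route is as follows. Write $s_k=\sum_{j=1}^N z_j^k$ and, after relabelling, assume $|z_1|=\max_j|z_j|$; this reduction is harmless because the right‑hand side is nondecreasing in $|z_1|$ and, the $z_j$ being integers not all zero, $\max_j|z_j|\ge 1$ (if all $z_j=0$ the claim is trivial). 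The first ingredient is the duality bound: for every choice of weights $(c_k)_{k=M+1}^{M+N}$,
\[
\max_{M+1\le k\le M+N}|s_k|\;\ge\;\Bigl|\sum_{k=M+1}^{M+N}c_k s_k\Bigr|\Big/\sum_{k=M+1}^{M+N}|c_k|.
\]
Setting $g(w)=\sum_{k=M+1}^{M+N}c_k w^{k-M-1}$, a polynomial of degree at most $N-1$, the numerator equals $\bigl|\sum_{j=1}^N z_j^{M+1}g(z_j)\bigr|$, so it suffices to produce $g$ of degree $\le N-1$ for which $\sum_j z_j^{M+1}g(z_j)$ has size at least of order $|z_1|^{M+1}$ while $\sum_k|c_k|$ remains small.

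The second, and delicate, ingredient is the construction of $g$ (equivalently, the shape of the kernel $w^{M+1}g(w)$). Naive Lagrange interpolation — which makes $\sum_j z_j^{M+1}g(z_j)=z_1^{M+1}$ exactly — is useless, since the coefficient sum $\sum_k|c_k|$ can blow up when the $z_j$ cluster near $z_1$. Instead one passes to the generating function $\sum_{k\ge0}s_k w^k=V(w)/U(w)$, where $U(w)=\prod_j(1-z_j w)$ and $\deg V\le N-1$, realizes $\sum_k c_k s_k$ as a contour integral of $V/U$ against the kernel on a circle just outside the nearest singularity $|w|=1/|z_1|$, and shapes the kernel as an appropriate power of a linear factor; optimizing its degree (which is precisely what forces the witness $k$ into an interval of length $N$) and the radius of the circle produces a lower bound of the form $\bigl(4e(1+M/N)\bigr)^{-N}|z_1|^{M+1}$ up to an absolute constant. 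The $e$ emerges from estimates of the shape $(1-1/N)^{N}\sim e^{-1}$, the $4$ from the normalization on the circle, and the passage from a soft constant to the clean value $1.007$ is the content of the Kolesnik--Straus optimization, in which every crude estimate is replaced by an exact one.

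The main obstacle — and the reason we quote the lemma rather than reprove it — is precisely this explicit optimization, carried out uniformly over all integers $M\ge0$ and $N\ge1$; the soft version of the Tur\'an argument yields only a constant of the same shape with a worse numerical factor, which would not suffice for the bookkeeping in our proof of \cref{thm:detect}. Finally, integrality of the $z_j$ plays no structural role: it is used only to guarantee $\max_j|z_j|\ge1$, so that the conclusion (with $|z_1|^k$, where a minor refinement of the above replaces the exponent $M+1$ by the witness index $k$) is not vacuous; the inequality holds for arbitrary complex $z_j$ once $z_1$ is taken of maximal modulus.
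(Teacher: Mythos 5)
Your approach matches the paper's exactly: the lemma is quoted directly from Kolesnik and Straus \cite{KolesnikStraus}, and the paper supplies no proof beyond this citation, which is precisely what you propose. Your accompanying sketch of the S\'os--Tur\'an power-sum argument and of where the constants $4e$ and $1.007$ come from is a reasonable outline, but it is neither required nor given in the paper.
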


\begin{remark}
The constant $4e$ is essentially optimal; see \cite{Makai}.
\end{remark}

\subsection{A conditional lower bound on high derivatives}

Assume \cref{hyp}. By \eqref{eqn:Hadamard}, if $\re(s)>1$, then for $k$ satisfying (xvi), we have that
\[
\frac{(-1)^{k+1}}{k!}\Big(\frac{L'}{L}(s,\chi)\Big)^{(k)}=\delta_{\chi}\Big(\frac{1}{s^{k+1}}+\frac{1}{(s-1)^{k+1}}\Big)-\sum_{\rho}\frac{1}{(s-\rho)^{k+1}}-\sum_{n=0}^{\infty}\frac{1}{(s+2(n+\delta_{\chi})+\mathfrak{a}_{\chi})^{k+1}}.
\]
We evaluate the above expression at $s=s_0$, with $s_0$ as in (ix).  Some casework shows that
\[
\Big|\frac{\delta_{\chi}}{s_0^{k+1}}-\sum_{n=0}^{\infty}\frac{1}{(s_0+2(n+\delta_{\chi})+\mathfrak{a}_{\chi})^{k+1}}\Big|\leq (1-2^{-k-1})\zeta(k+1)\leq 1+k^{-2}.
\]
By (xii), we have that
\[
\delta_{\chi}\Big|\frac{1}{(s_0-1)^{k+1}}\Big|\leq \frac{\delta_{\chi}}{10^{12(k+1)}}.
\]
If $|1+i\tau-\rho|>A\eta$, then $|s_0-\rho|\geq R\eta$.  Therefore, by an intermediate calculation in the proof of \cref{lem:Linnik}, we have that
\begin{align*}
\sum_{|1+i\tau-\rho|>A\eta}\frac{1}{|s_0-\rho|^{k+1}}&\leq \frac{1}{(R\eta)^{k-1}}\sum_{\rho}\frac{1}{|s_0-\rho|^2}\\
&\leq \frac{1}{(R\eta)^{k-1}\eta}\sum_{\rho}\frac{1+\eta-\beta}{|s_0-\rho|^2}=\frac{R}{(R\eta)^{k}}\sum_{\rho}\re\Big(\frac{1}{s_0-\rho}\Big)\leq \frac{(\eta\mathcal{L}+2)R^2}{2(R\eta)^{k+1}}.
\end{align*}
Combining the above results in this subsection and applying the lower bound on $k$ in (xvi), we arrive at the bound
\begin{equation}
\label{eqn:prelim_lower_bound_zeros}
\begin{aligned}
&\frac{\eta^{k+1}}{k!}\Big|\Big(\frac{L'}{L}\Big)^{(k)}(s_0,\chi)\Big|\\
&\geq \Big|\sum_{|1+i\tau-\rho|\leq A\eta}\frac{\eta^{k+1}}{(s_0-\rho)^{k+1}}\Big|-\frac{1}{R^{k+1}}\Big(\frac{\eta\mathcal{L}+2}{2}R^2+(1+k^{-2}+10^{-12(k+1)})(R\eta)^{k+1}\Big)\\
&\geq \Big|\sum_{|1+i\tau-\rho|\leq A\eta}\frac{\eta^{k+1}}{(s_0-\rho)^{k+1}}\Big|-\frac{1}{R^{k+1}}\Big(\frac{\eta\mathcal{L}+2}{2}R^2+1.001(R\eta)^{k+1}\Big).
\end{aligned}
\end{equation}

By \cref{hyp}, the sum over zeros in \eqref{eqn:prelim_lower_bound_zeros} is nonempty, which makes \cref{lem:turan} applicable.  In the notation of \cref{lem:turan}, we have that
\[
N = n_{\chi}(A\eta,1+i\tau)\geq 1,\qquad \{z_1,\ldots,z_N\}=\{(s_0-\rho)^{-1}\colon |1+i\tau-\rho|\leq A\eta\},\qquad z_1 = \rho_0.
\]
Note that $N \leq\mathcal{N}_{\eta}$ (see (xiv)) per \cref{cor:Linnik_lemma}.  With $M_{\eta}$ as in (xv), we choose
\[
M=M_{\eta}.
\]
Per \cref{hyp}, we have that $|s_0-\rho_0|=|(1+i\tau-\rho_0)+\eta|\leq 2\eta$.  It follows that there exists $k\in[M,M+N-1]$, necessarily greater than or equal to $64$ per (xvi), such that
\[
\Big|\sum_{|1+i\tau-\rho|\leq A\eta}\frac{\eta^{k+1}}{(s_0-\rho)^{k+1}}\Big|\geq 1.007\Big(\frac{1}{4e(1+\frac{M}{N})}\Big)^{N}\frac{\eta^{k+1}}{|s_0-\rho_0|^{k+1}}\geq \frac{1.007}{2^{k+1}}\Big(\frac{1}{4e(1+\frac{M}{N})}\Big)^{N}.
\]
This lower bound and  \eqref{eqn:prelim_lower_bound_zeros} imply that
\begin{equation}
	\label{eqn:prelim_lower_bound_zeros2}
	\frac{\eta^{k+1}}{k!}\Big|\Big(\frac{L'}{L}\Big)^{(k)}(s_0,\chi)\Big|\geq \frac{1.007}{2^{k+1}}\Big(\frac{1}{4e(1+\frac{M}{N})}\Big)^{N}-\frac{1}{R^{k+1}}\Big(\frac{\eta\mathcal{L}+2}{2}R^2+1.001(R\eta)^{k+1}\Big).
\end{equation}

Using the bound $k+1\geq M_{\eta}=(\alpha-1)\mathcal{N}_{\eta}$ as well as (vi), (x), (xiv), (xv), and (xvi),  we find that \eqref{eqn:prelim_lower_bound_zeros2} is at least
\begin{equation}
\begin{aligned}
\label{eqn:lowerbound1}
&1.007\Big(\frac{1}{4e\alpha}\Big)^{\mathcal{N}_{\eta}}\frac{1}{2^{k+1}}-\frac{1}{R^{k+1}}\Big(\frac{\eta\mathcal{L}+2}{2}R^2+1.001(R\eta)^{k+1}\Big)\\
&\geq 1.007\Big(\frac{1}{4e\alpha}\Big)^{\mathcal{N}_{\eta}}\frac{1}{2^{k+1}}\Big(1-(4e\alpha)^{\mathcal{N}_{\eta}}\frac{2^{k+1}}{R^{k+1}}3.123\mathcal{N}_{\eta}\Big)\\
&\geq 1.007\Big(\frac{1}{(4e\alpha 2^{\alpha-1})^A}\Big)^{\frac{\mathcal{N}_{\eta}}{A}}\Big(1-\Big(4e\alpha\Big(\frac{2}{\sqrt{A^2+1}}\Big)^{\alpha-1}\Big)^{\mathcal{N}_{\eta}}3.123\mathcal{N}_{\eta}\Big).
\end{aligned}
\end{equation}
It is at this point that the minimization problem \eqref{eqn:minimization_problem} arises, and our choices of $\alpha$ in (iv) and $A$ in (v) are justified.  (Notice that the coefficient of $\eta\mathcal{L}$ in $\mathcal{N}_{\eta}/A$ is independent of $A$.)  Now, \eqref{eqn:lowerbound1} equals
\begin{equation}
\begin{aligned}
\label{eqn:lowerbound2}
&1.007\Big(\frac{1}{(4e\alpha 2^{\alpha-1})^{\frac{1}{\alpha-1}}}\Big)^{M_{\eta}}\Big(1-\Big(\frac{2}{3}\Big)^{\mathcal{N}_{\eta}}3.123\mathcal{N}_{\eta}\Big)\\
&=1.007\Big(\frac{1}{3.804\,416\,849\,672\ldots}\Big)^{M_{\eta}}\Big(1-\Big(\frac{2}{3}\Big)^{\mathcal{N}_{\eta}}3.123\mathcal{N}_{\eta}\Big).
\end{aligned}
\end{equation}
Since $\mathcal{N}_{\eta}\geq 9$ per (xiv), it follows that
\begin{equation}
	\label{eqn:main_lower_bound}
\frac{\eta^{k+1}}{k!}\Big|\Big(\frac{L'}{L}\Big)^{(k)}(s_0,\chi)\Big|\geq 0.27\Big(\frac{1}{3.804\,416\,849\,673}\Big)^{M_{\eta}}.
\end{equation}

\subsection{Unconditional upper bounds for high derivatives}

Now, we turn to an upper bound for
\begin{equation}
\label{eqn:deriv_expansion}
\frac{\eta^{k+1}}{k!}\Big|\Big(\frac{L'}{L}\Big)^{(k)}(s_0,\chi)\Big| = \eta\Big|\sum_{n=1}^{\infty}\frac{\Lambda(n)\chi(n)}{n^{1+i\tau}}j_{k}(\eta\log n)\Big|,\qquad j_{k}(u) = e^{-u}\frac{u^k}{k!}.	
\end{equation}
Per (xvi), we have that $k\in[(\alpha-1)\mathcal{N}_{\eta},\alpha\mathcal{N}_{\eta}]\subseteq[M_{\eta},\frac{\alpha}{\alpha-1}M_{\eta}]$ and $k\geq 64$.  Recall the definitions of $V$ (xvii), $A_0$ (xviii), $A_1$ (xix), $N_{\eta}$ (xx), and $N_{\eta}^*$ (xxi).  We decompose the right hand side of \eqref{eqn:deriv_expansion} into a sum over $n\notin(N_{\eta},N_{\eta}^{*} ]$, a sum over composite $n\in(N_{\eta},N_{\eta}^{*} ]$, and a (dominant) sum over prime $n\in(N_{\eta},N_{\eta}^{*} ]$.

If $n\leq N_{\eta}$, then $\eta\log n\leq A_{0} M_{\eta}$.  Since $k\in[M_{\eta},\frac{\alpha}{\alpha-1}M_{\eta}]$ and $k!\geq(k/e)^k$, it follows that
\[
j_{k}(\eta\log n)=n^{-\eta}\frac{(\eta\log n)^k}{k!}\leq n^{-\eta}\Big(\frac{A_{0} eM_{\eta}}{k}\Big)^k\leq n^{-\eta}V^{-k}.
\]
If $n\geq N_{\eta}^{*} $, then $\eta\log n\geq A_{1}  M_{\eta}$.  Since $e^{-u/2}u^k/k!$ is decreasing for $u>2k$ and $A_{1} >2$, we find that if $n\geq N_{\eta}^{*} $, then
\begin{align*}
j_{k}(\eta\log n)=n^{-\frac{\eta}{2}}\frac{e^{-\frac{1}{2}\eta\log n}(\eta\log n)^{k}}{k!}\leq n^{-\frac{\eta}{2}}\frac{e^{-\frac{1}{2}A_{1}  M_{\eta}}(A_{1}  M_{\eta})^k}{k!}&\leq n^{-\frac{\eta}{2}}e^{-\frac{1}{2}A_{1}  M_{\eta}}(A_{1}  M_{\eta})^k\Big(\frac{e}{k}\Big)^k\\
&= n^{-\frac{\eta}{2}}(A_{1}  e^{1-\frac{1}{2}A_{1}  \frac{M_{\eta}}{k}})^k \Big(\frac{M_{\eta}}{k}\Big)^k\\
&\leq n^{-\frac{\eta}{2}}(A_{1}  e^{1-\frac{\alpha-1}{2\alpha}A_{1} })^k= n^{-\frac{\eta}{2}}V^{-k}.
\end{align*}
Therefore, we have the bound
\[
\eta\Big|\sum_{n\notin(N_{\eta},N_{\eta}^{*} ]}\frac{\Lambda(n)\chi(n)}{n^{1+i\tau}}j_{k}(\eta\log n)\Big|\leq \frac{\eta}{V^k}\Big(\sum_{n=1}^{\infty}\frac{\Lambda(n)}{n^{1+\eta}}+\sum_{n=1}^{\infty}\frac{\Lambda(n)}{n^{1+\frac{\eta}{2}}}\Big)\leq \frac{\eta}{V^k}\Big(\frac{1}{\eta}+\frac{2}{\eta}\Big)=\frac{3}{V^k}.
\]

Now, let $ n\in(N_{\eta},N_{\eta}^{*} ]$.  Since $(\log u)^k\leq k!u$ for $k,u\geq 1$ and $0<\eta\leq\frac{1}{10A}$ per (viii), we have that
\[
j_{k}(\eta\log n)=n^{-\eta}\frac{(\eta\log n)^k}{k!}=n^{-\eta}(2\eta)^k\frac{(\log n^{\frac{1}{2}})^k}{k!}\leq (2\eta)^k n^{\frac{1}{2}-\eta}\leq\frac{n^{\frac{1}{2}-\eta}}{(3V)^k}.
\]
Thus,
\begin{align*}
\eta\Big|\sum_{\substack{ n\in(N_{\eta},N_{\eta}^{*} ] \\ \textup{$n$ composite}}}\frac{\Lambda(n)\chi(n)}{n^{1+i\tau}}j_{k}(\eta\log n)\Big|&\leq \frac{\eta}{(3V)^k}\sum_{\substack{ n\in(N_{\eta},N_{\eta}^{*} ] \\ \textup{$n$ composite}}}\frac{\Lambda(n)}{n^{\frac{1}{2}+\eta}}\\
&=\frac{\eta}{(3V)^k}\sum_{p}\sum_{j=2}^{\infty}\frac{\log p}{p^{(\frac{1}{2}+\eta)j}}\\
&\leq (2+\sqrt{2})\frac{\eta}{(3V)^k}\sum_{p}\frac{\log p}{p^{1+2\eta}}\\
&\leq -(2+\sqrt{2})\frac{\eta}{(3V)^k}\frac{\zeta'}{\zeta}(1+2\eta)\leq (2+\sqrt{2})\frac{\eta}{(3V)^k}\frac{1}{2\eta}=\frac{1+2^{-\frac{1}{2}}}{(3V)^k}.
\end{align*}

Finally, we use partial summation to consider the sum
\begin{multline*}
\eta\Big|\sum_{p \in(N_{\eta},N_{\eta}^{*} ]}\frac{\Lambda(p)\chi(p)}{p^{1+i\tau}}j_{k}(\eta\log p)\Big|=\eta\Big|j_{k}(\eta\log N_{\eta}^{*} )\sum_{p \in(N_{\eta},N_{\eta}^{*} ]}\frac{\chi(p)\log p}{p^{1+i\tau}}\\
-\int_{N_{\eta}}^{N_{\eta}^{*} }\sum_{p \in(N_{\eta},u]}\frac{\chi(p)\log p}{p^{1+i\tau}}\Big(\frac{d}{du}j_{k}(\eta\log u)\Big)du\Big|.	
\end{multline*}
Our earlier bound for $j_k(\eta\log n)$ when $n\geq N_{\eta}^{*}$ implies that
\[
\eta\Big|j_{k}(\eta\log N_{\eta}^{*} )\sum_{p \in(N_{\eta},N_{\eta}^{*} ]}\frac{\chi(p)\log p}{p^{1+i\tau}}\Big|\leq \frac{\eta(N_{\eta}^{*})^{-\frac{\eta}{2}}}{V^{k}}\sum_{p \leq N_{\eta}^{*}}\frac{\log p}{p}\leq \frac{\eta}{V^{k}}\sum_{p \leq N_{\eta}^{*}}\frac{\log p}{p^{1+\frac{\eta}{2}}}\leq\eta \frac{2e}{\eta V^{k}} \leq \frac{2e}{V^k}.
\]
Since $k\geq 64$, it follows that
\[
\Big|\frac{d}{du}j_{k}(\eta\log u)\Big| = \frac{\eta}{u}|(j_{k-1}(\eta\log u)-j_{k}(\eta\log u))|\leq \frac{\eta}{u}\sup_{\substack{u\geq 1 \\ k\geq 64 \\ \eta>0}}\frac{u^{-\eta}(\eta\log u)^{k-1}|k-\eta\log u|}{k!} \leq \frac{\eta}{200u}.
\]

We combine the results in this subsection to conclude that
\begin{equation}
\frac{\eta^{k+1}}{k!}\Big|\Big(\frac{L'}{L}\Big)^{(k)}(s_0,\chi)\Big|\leq \frac{\eta^2}{200}\int_{N_{\eta}}^{N_{\eta}^{*} }\Big|\sum_{N_{\eta}< p\leq u}\frac{\chi(p)\log p}{p^{1+i\tau}}\Big|\frac{du}{u}+\frac{8.437}{V^k}.
\end{equation}
Therefore,
\begin{equation}
	\label{eqn:main_upper_bound}
\frac{\eta^{k+1}}{k!}\Big|\Big(\frac{L'}{L}\Big)^{(k)}(s_0,\chi)\Big|\leq \frac{\eta^2}{200}\int_{N_{\eta}}^{N_{\eta}^{*}}\Big|\sum_{N_{\eta}< p\leq u}\frac{\chi(p)\log p}{p^{1+i\tau}}\Big|\frac{du}{u}+\frac{8.437}{V^k}.
\end{equation}

\subsection{Proof of \cref{thm:detect}}

Under \cref{hyp}, the lower bound \eqref{eqn:main_lower_bound} holds.  Unconditionally, the upper bound \eqref{eqn:main_upper_bound} holds.  Therefore, under \cref{hyp}, the two bounds imply that
\[
0.27\Big(\frac{1}{3.804\,416\,849\,673}\Big)^{M_{\eta}}\leq \frac{\eta^2}{200}\int_{N_{\eta}}^{N_{\eta}^{*} }\Big|\sum_{N_{\eta}< p\leq u}\frac{\chi(p)\log p}{p^{1+i\tau}}\Big|\frac{du}{u}+\frac{8.437}{V^k}.
\]
It follows from our lower bounds for $k$ and $M_{\eta}$ in (xv) and (xvi) as well as our value of $V$ in (xvii) that
\[
\frac{1}{4}e^{-1.336\,162\,721M_{\eta}}\leq \frac{\eta^2}{200}\int_{N_{\eta}}^{N_{\eta}^{*} }\Big|\sum_{N_{\eta}< p\leq u}\frac{\chi(p)\log p}{p^{1+i\tau}}\Big|\frac{du}{u}.
\]
We square both sides and apply the Cauchy--Schwarz inequality to obtain
\begin{align*}
\frac{1}{16}e^{-2.672\,32M_{\eta}}&\leq \Big(\frac{\eta^2}{200}\int_{N_{\eta}}^{N_{\eta}^{*} }\Big|\sum_{N_{\eta}< p\leq u}\frac{\chi(p)\log p}{p^{1+i\tau}}\Big|\frac{du}{u}\Big)^2\\
&\leq \frac{\eta^4}{40\,000}\Big(\int_{N_{\eta}}^{N_{\eta}^{*} }\frac{du}{u}\Big)\int_{N_{\eta}}^{N_{\eta}^{*} }\Big|\sum_{N_{\eta}< p\leq u}\frac{\chi(p)\log p}{p^{1+i\tau}}\Big|^2 \frac{du}{u}\\
&\leq \frac{11}{40\,000}\eta^3 M_{\eta}\int_{N_{\eta}}^{N_{\eta}^{*} }\Big|\sum_{N_{\eta}< p\leq u}\frac{\chi(p)\log p}{p^{1+i\tau}}\Big|^2 \frac{du}{u}.
\end{align*}
\cref{thm:detect} follows once we multiply both sides by $16e^{2.672\,32M_{\eta}}$.

\subsection{From detecting zeros to counting zeros}

We apply \cref{thm:detect} to count zeros of $L(s,\chi)$ that satisfy  $|1+i\tau-\rho|\leq \eta$.  In order to account for zeros that lie on the boundary, we increase the radius by a factor of $\xi$ (see (xxii)).  Therefore, in all that follows, every instance of $\eta$ from before must be increased by a factor of $\xi$.

Let $\rho=\beta+i\gamma$ be a nontrivial zero of $L(s,\chi)$.  For $w>1$, define
\[
\Upphi_{\rho,w}(\tau)=\Upphi_{\rho,w,\chi}(\tau)=\begin{cases}
	1&\mbox{if $|1+i\tau-\rho|\leq w$,}\\
	0&\mbox{otherwise.}
\end{cases}
\]
Regardless of whether $\rho_0$ satisfies \cref{hyp}, it follows that
\[
\Upphi_{\rho_0,\xi\eta}(\tau)\leq \Big(\frac{1}{200}(\xi\eta)^3 M_{\xi\eta} e^{2.672\,32M_{\xi\eta}}\int_{N_{\xi\eta}}^{N_{\xi\eta}^{*} }\Big|\sum_{N_{\xi\eta}< p\leq u}\frac{\chi(p)\log p}{p^{1+i\tau}}\Big|^2 \frac{du}{u}\Big)\Upphi_{\rho_0,\xi\eta}(\tau).
\]
We integrate both sides over $\tau\in[-T,T]$ and obtain
\[
\int_{-T}^{T}\Upphi_{\rho_0,\xi\eta}(\tau)d\tau\leq \frac{1}{200}(\xi\eta)^3 M_{\xi\eta} e^{2.672\,32M_{\xi\eta}}\int_{-T}^{T}\Big(\int_{N_{\xi\eta}}^{N_{\xi\eta}^{*} }\Big|\sum_{N_{\xi\eta}< p\leq u}\frac{\chi(p)\log p}{p^{1+i\tau}}\Big|^2 \frac{du}{u}\Big)\Upphi_{\rho_0,\xi\eta}(\tau)d\tau.
\]
Because of \cref{lem:RH}, this maneuver is acceptable even with (xii).  If $1-\eta\leq\re(\rho_0)<1$ and $|\im(\rho_0)|\leq T$, then under \cref{hyp}, we have that
\[
\int_{-T}^{T}\Upphi_{\rho_0,\xi\eta}(\tau)d\tau \geq \eta\sqrt{\xi^2-1}.
\]
Consequently, if $1-\eta\leq\re(\rho_0)<1$ and $|\im(\rho_0)|\leq T$, then by \cref{thm:detect},
\begin{equation}
\label{eqn:zero_detect_1}
1\leq \frac{\xi^3}{200\sqrt{\xi^2-1}}\eta^2 M_{\xi\eta} e^{2.672\,32M_{\xi\eta}}\int_{-T}^{T}\Big(\int_{N_{\xi\eta}}^{N_{\xi\eta}^{*} }\Big|\sum_{N_{\xi\eta}< p\leq u}\frac{\chi(p)\log p}{p^{1+i\tau}}\Big|^2 \frac{du}{u}\Big)\Upphi_{\rho_0,\xi\eta}(\tau)d\tau.
\end{equation}
\cref{cor:Linnik_lemma}, (xiv), and (xv) imply that
\[
\sum_{\substack{\rho \\ L(\rho,\chi)=0}}\Upphi_{\rho}(\tau)=n_{\chi}(\xi\eta,1+i\tau)\leq \frac{\xi}{\alpha-1}M_{\xi\eta}.
\]
Therefore, upon summing \eqref{eqn:zero_detect_1} over all nontrivial zeros $\rho=\beta+i\gamma$ of $L(s,\chi)$ with $1-\eta\leq\beta\leq 1$ and $|\gamma|\leq T$, we find that if
\[
N_{\chi}(\sigma,T):=\#\{\rho=\beta+i\gamma\colon \beta>\sigma,~|\gamma|\leq T,~L(\rho,\chi)=0\},
\]
then
\begin{equation}
	\label{eqn:ZDE_dirichlet_polynomial1}
\begin{aligned}
&N_{\chi}(1-\eta,T)\\
&\leq \frac{\xi^3}{200\sqrt{\xi^2-1}}\eta^2 M_{\xi\eta} e^{2.672\,32M_{\xi\eta}}\int_{-T}^{T}\Big(\int_{N_{\xi\eta}}^{N_{\xi\eta}^{*} }\Big|\sum_{N_{\xi\eta}< p\leq u}\frac{\chi(p)\log p}{p^{1+i\tau}}\Big|^2 \frac{du}{u}\Big)\sum_{\rho}\Upphi_{\rho_0,\xi\eta}(\tau)d\tau\\
&\leq  1.613\eta^2 M_{\xi\eta}^2 e^{2.672\,32M_{\xi\eta}}\int_{-T}^{T}\Big(\int_{N_{\xi\eta}}^{N_{\xi\eta}^{*} }\Big|\sum_{N_{\xi\eta}< p\leq u}\frac{\chi(p)\log p}{p^{1+i\tau}}\Big|^2 \frac{du}{u}\Big)d\tau\\
&=1.613\eta^2 M_{\xi\eta}^2 e^{2.672\,32M_{\xi\eta}}\int_{N_{\xi\eta}}^{N_{\xi\eta}^{*} }\Big(\int_{-T}^{T}\Big|\sum_{N_{\xi\eta}< p\leq u}\frac{\chi(p)\log p}{p^{1+i\tau}}\Big|^2 d\tau\Big)\frac{du}{u}.
\end{aligned}
\end{equation}

\section{Proof of \cref{thm:GLFZDE}}
\label{sec:proof_main_theorem}

We begin by summing \eqref{eqn:ZDE_dirichlet_polynomial1} over all primitive Dirichlet characters with modulus at most $Q$.  In keeping with \cite{Bombieri2,Gallagher}, we will choose $T = Q$, so (x) becomes the condition $Q>10^4$.  We record
\begin{equation}
\label{eqn:Nchoice_large_sieve}
\begin{aligned}
\mathcal{N}_{\xi\eta} &= A\eta\Big(\frac{4}{3}\log Q+13.04\Big),\\
M_{\xi\eta}&=(\alpha-1)\mathcal{N}_{\eta}\geq 63.925,\\
N_{\xi\eta}&=\exp\Big(\frac{A_0 M_{\xi\eta}}{\xi\eta}\Big)>10^{106}Q^{3.175\,142}.
\end{aligned}
\end{equation}

\subsection{Proof when $Q\leq 10^4$}

Since $\sigma\geq\frac{39}{40}$ in \cref{thm:GLFZDE}, this follows immediately from \cref{lem:Platt}.

\subsection{Bound for $N(\sigma,Q)$ when $Q>10^4$}
\label{subsec:nonexc}

We sum \eqref{eqn:ZDE_dirichlet_polynomial1} over all primitive $\chi\pmod{q_{\chi}}$ with $q_{\chi}\leq Q$, leading us to now bound
\begin{equation}
\label{eqn:averaged_N}
\begin{aligned}
&\sum_{q\leq Q}\sum_{\substack{\chi\pmod{q} \\ \textup{$\chi$ primitive}}}N_{\chi}(1-\eta,T)\\
&\leq 1.613\eta^2 M_{\xi\eta}^2 e^{2.672\,32M_{\xi\eta}}\int_{N_{\xi\eta}}^{N_{\xi\eta}^{*} }\Big(\sum_{q\leq Q}\sum_{\substack{\chi\pmod{q} \\ \textup{$\chi$ primitive}}}\int_{-T}^{T}\Big|\sum_{N_{\xi\eta}< p\leq u}\frac{\chi(p)\log p}{p^{1+i\tau}}\Big|^2 d\tau\Big)\frac{du}{u}.	
\end{aligned}
\end{equation}
To proceed, we cite a ``pre-sifted'' version of the large sieve inequality for Dirichlet characters.

\begin{lemma}
\label{lem:large_sieve}
Let $(a_n)_{n=1}^{\infty}$ be a sequence of complex numbers.  Let $P^{-}(n)=\min\{p\colon p|n\}$ if $n>1$ and $P^{-}(1)=\infty$.  If $Q\geq 1$, $U\geq 0$, and $V\geq 1$, then
\[
\sum_{q\leq Q}\Big(\log\frac{Q}{q}\Big)\sum_{\substack{\chi\pmod{q} \\ \textup{$\chi$ primitive}}}\Big|\sum_{\substack{U < n \leq U+ \lfloor V\rfloor \\ P^{-}(n)>Q}}a_n \chi(n)\Big|^2\leq (V+Q^2-1)\sum_{\substack{U < n \leq U+ \lfloor V\rfloor \\ P^-(n)>Q}}|a_n|^2.
\]
\end{lemma}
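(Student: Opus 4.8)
The plan is to reduce the inequality to the sharp multiplicative form of the large sieve for Dirichlet characters---the Montgomery--Vaughan inequality
\[
\sum_{q\le Q}\frac{q}{\phi(q)}\sum_{\substack{\chi\pmod q\\ \textup{primitive}}}\Big|\sum_{M<n\le M+N}b_n\chi(n)\Big|^2\le (N+Q^2-1)\sum_{M<n\le M+N}|b_n|^2
\]
---and then to manufacture the extra weight $\log(Q/q)$ for free by exploiting the pre-sifted hypothesis. First I would set $b_n:=a_n$ when $U<n\le U+\lfloor V\rfloor$ and $P^-(n)>Q$, and $b_n:=0$ otherwise, so that the support lies in an interval of $\lfloor V\rfloor$ consecutive integers and $\lfloor V\rfloor+Q^2-1\le V+Q^2-1$ is already the target constant (here the sharpness of the constant in the large sieve is essential). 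The role of the condition $P^-(n)>Q$ is that for every $n$ in the support and every $q$ with $1\le q\le Q$ one has $\gcd(n,q)=1$, since the least prime factor of $n$ exceeds $Q\ge q$. Thus $\chi(n)\ne 0$ for every Dirichlet character of modulus at most $Q$, and if $\chi$ is primitive modulo $q$ and $\widetilde\chi$ denotes the character it induces to any modulus $qk\le Q$, then $\widetilde\chi(n)=\chi(n)$ throughout the support, so $\big|\sum_n b_n\chi(n)\big|^2=\big|\sum_n b_n\widetilde\chi(n)\big|^2$.

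The logarithm should then come from the spacing of Farey fractions. Passing from primitive characters to additive characters by way of Gauss sums (using $|\tau(\chi)|^2=q$ for $\chi$ primitive modulo $q$, together with orthogonality) yields
\[
\sum_{\substack{\chi\pmod q\\ \textup{primitive}}}\Big|\sum_n b_n\chi(n)\Big|^2\le \frac{\phi(q)}{q}\sum_{\substack{1\le a\le q\\ (a,q)=1}}\Big|\sum_n b_n e^{2\pi i an/q}\Big|^2,
\]
so it suffices to bound $\sum_{q\le Q}\log(Q/q)\sum_{(a,q)=1}|T(a/q)|^2$, where $T(x)=\sum_n b_n e^{2\pi i nx}$. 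The geometric input is that the fractions $a/q$ with $q\le Q$ are not merely $Q^{-2}$-separated: in the Farey dissection of level $Q$ the two neighbours of $a/q$ have denominators larger than $Q-q$, so the gap on each side of $a/q$ is at least $(qQ)^{-1}$. Feeding this local spacing into the spacing-sensitive form of the large sieve (in the tradition of Montgomery--Vaughan and Selberg's optimal large sieve) attaches to the fraction $a/q$ a saving of order $(qQ)^{-1}$; since $q\log(Q/q)\le Q$ for every $q\le Q$ (this is $\log x\le x$ with $x=Q/q$), the weight $\log(Q/q)$ can be charged against the $Q^2$-term.

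The main obstacle is that the spacing-sensitive large sieve still carries the interval-length term $N=\lfloor V\rfloor$, and on the range of small $q$---where $\log(Q/q)$ can be as large as $\log Q$---this term, if left untreated, would produce a spurious factor $\log Q$. This is precisely where the pre-sifted hypothesis must be used beyond the bare observation $\chi(n)\ne 0$: the $b_n$ are supported on the $Q$-sifted integers in $(U,U+\lfloor V\rfloor]$, whose number is at most roughly $\lfloor V\rfloor/\log Q$ by Mertens' estimate $\prod_{p\le Q}(1-p^{-1})<(\log Q)^{-1}$, so the effective length governing the large sieve in the small-$q$ regime is reduced by a factor comparable to $\log Q$. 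Carrying this reduction out rigorously and completely uniformly---so that the final constant is exactly $V+Q^2-1$ for all $V\ge 1$ and $Q\ge 1$, with no implicit ``sufficiently large'' hypotheses---is the delicate heart of the argument, and is the refinement, going back to Gallagher and Bombieri, that the statement records.
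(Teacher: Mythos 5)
You should first note that the paper does not prove this lemma at all: its ``proof'' is the citation to \cite[Theorem 9.11]{Opera}, the numerically optimal form of Gallagher's Lemma 4, so your proposal has to stand on its own as a proof --- and it has a genuine gap exactly where the weight $\log(Q/q)$ must be produced. The spacing-sensitive large sieve (Montgomery--Vaughan/Selberg), applied to the Farey fractions of level $Q$ with the local gaps $\geq 1/(qQ)$ that you correctly record, gives at best a bound of the shape
\[
\sum_{q\leq Q}\ \sum_{\substack{1\leq a\leq q\\ (a,q)=1}}\frac{|T(a/q)|^{2}}{\lfloor V\rfloor-1+c\,qQ}\ \leq\ \sum_{n}|a_n|^{2},
\]
which improves on the uniform bound only when $qQ$ is comparable to the interval length; in the main regime $V\geq Q^{2}$ every local weight is of size $1/V$ and no factor $\log(Q/q)$ can be extracted this way. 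Your proposed repair --- that the pre-sifted support contains only about $V/\log Q$ integers, so ``the effective length'' in the large sieve drops by a factor $\log Q$ --- is not a legitimate step: the term $V$ (more precisely $\lfloor V\rfloor-1$) in the large sieve is the length of the interval carrying the coefficients, not the cardinality of their support, and it cannot be replaced by the number of nonzero terms (such a ``sparse'' large sieve would be a far stronger statement, and coefficients supported on short progressions show that interval length, not support size, governs the constant). You effectively concede the point by deferring ``the delicate heart of the argument,'' so the decisive step is missing, not merely unwritten.

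For comparison, the mechanism that actually yields the logarithm in the cited result uses the pre-sifting across \emph{all} moduli $m\leq Q$, not at the fixed denominator $q$. One starts from the additive large sieve over all reduced fractions $b/m$ with $m\leq Q$ (constant $\lfloor V\rfloor+Q^{2}-1\leq V+Q^{2}-1$); since every $n$ in the support is coprime to every $m\leq Q$, one may expand $T(b/m)=\phi(m)^{-1}\sum_{\psi\bmod m}\tau(\psi)\bar\psi(b)\sum_n a_n\bar\psi(n)$, use $|\tau(\psi)|^{2}=\mu^{2}(m/q)\,q$ when $\psi$ is induced by a primitive $\chi\pmod q$, and note $\psi(n)=\chi(n)$ on the support. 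Summing over $m\leq Q$ then attaches to each primitive $\chi\pmod q$ the accumulated weight
\[
\frac{q}{\phi(q)}\sum_{\substack{k\leq Q/q\\ (k,q)=1}}\frac{\mu^{2}(k)}{\phi(k)}\ \geq\ \log\frac{Q}{q},
\]
by the classical bound $\sum_{k\leq x,\,(k,q)=1}\mu^{2}(k)/\phi(k)\geq(\phi(q)/q)\log x$. In other words, the logarithm comes from the many moduli $qk\leq Q$ to which $\chi$ can be induced; your very first reduction, which keeps only the fractions with denominator exactly $q$, discards precisely this mechanism, and neither Farey spacing nor the sparsity of the sifted support can recover it.
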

\begin{proof}
This is \cite[Theorem 9.11]{Opera}, a numerically optimal version of \cite[Lemma 4]{Gallagher}.
\end{proof}

\begin{corollary}
\label{cor:Ramare1}
Let $(a_n)_{n=1}^{\infty}$ be a sequence of complex numbers such that $\sum_{n=1}^{\infty}|a_n|$ converges.  If $Q\geq 1$ and $T\geq 1$, then
\[
\sum_{q\leq Q}\Big(\log \frac{Q}{q}\Big)\sum_{\substack{\chi\pmod{q} \\ \textup{$\chi$ primitive}}}\int_{-T}^{T}\Big|\sum_{\substack{n\geq 1 \\ P^{-}(n)>Q}}a_n \chi(n)n^{-it}\Big|^2 dt\leq 7\sum_{\substack{n\geq 1 \\ P^{-}(n)>Q}}|a_n|^2(n+Q^2\max\{T,3\}).
\]
\end{corollary}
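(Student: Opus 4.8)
The plan is to deduce the integrated large-sieve bound from \cref{lem:large_sieve} by combining a Montgomery–Vaughan-type Hilbert inequality on the Dirichlet series side (to handle the integral over $t\in[-T,T]$) with a dyadic/block decomposition in $n$ to reduce the infinite sum to the finite sums of length $\lfloor V\rfloor$ appearing in \cref{lem:large_sieve}. First I would fix the modulus $q\leq Q$ and the primitive character $\chi\pmod q$, and expand the square $\big|\sum_n a_n\chi(n)n^{-it}\big|^2 = \sum_{m,n} a_m\overline{a_n}\chi(m)\overline{\chi(n)}(m/n)^{-it}$ (all sums restricted to $P^-(\cdot)>Q$). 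Integrating over $t\in[-T,T]$ gives the diagonal term $2T\sum_n|a_n|^2$ plus off-diagonal terms $a_m\overline{a_n}\chi(m)\overline{\chi(n)}\cdot\frac{2\sin(T\log(m/n))}{\log(m/n)}$. The standard route is then to invoke the Montgomery–Vaughan mean value theorem (the analytic large sieve for the continuous integral) to bound $\int_{-T}^T\big|\sum_n a_n n^{-it}\big|^2\,dt$ by $\sum_n |a_n|^2(2T + O(n))$ — but here we want the explicit constant and to keep the pre-sifting, so I would instead run the argument through \cref{lem:large_sieve} directly.

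Concretely, the key step is to split the range of $n$ into consecutive blocks $I_\ell = (U_\ell, U_\ell+\lfloor V\rfloor]$ of length $\lfloor V\rfloor$, choosing $V$ essentially equal to $\max\{T,3\}$ so that within each block the phases $n^{-it}$ for $|t|\leq T$ vary by a bounded amount. Applying \cref{lem:large_sieve} with this $V$ to each block, after first discretizing the $t$-integral into $O(T/V)=O(1)$ sample points (or, cleaner, using a Gallagher-type Sobolev inequality $\int_{-T}^T|f(t)|^2\,dt \ll V\sum_\ell |f(t_\ell)|^2 + V^{-1}\int|f'|^2$ to pass from the integral to a sum over $t_\ell$ spaced by $1/V$), one gets for each block and each sample point a bound of the shape $(V+Q^2-1)\sum_{n\in I_\ell}|a_n|^2$. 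Summing over the $O(T/V)$ sample points and over all blocks $\ell$, weighting by $\log(Q/q)$ as in \cref{lem:large_sieve}, yields $\ll \sum_n |a_n|^2\cdot(\text{something of size }T + Q^2 T/V + n)$; with $V\asymp\max\{T,3\}$ the middle term is $O(Q^2)$ and the whole thing collapses to $\sum_n|a_n|^2(n+Q^2\max\{T,3\})$ up to an absolute constant, which the authors have pinned down as $7$.

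The main obstacle is the bookkeeping needed to get the explicit constant $7$: one must track how the factor $(V+Q^2-1)$ from \cref{lem:large_sieve}, the number $\lceil 2TV\rceil+1 \asymp 2T^2$ of sample points in a discretization of spacing $1/V$, and the $n$-dependent losses from the block decomposition (a term of individual size $n$ arises because the $\ell$-th block sits at height $\asymp n$ and contributes $n|a_n|^2$ once it is weighted correctly) all combine. I would handle this by choosing $V=\max\{T,3\}$ exactly, noting $\lfloor V\rfloor\geq V-1$ so that $V+Q^2-1\leq \max\{T,3\}+Q^2-1$, and counting blocks carefully so that the number of $(I_\ell, t_j)$ pairs touching a given $n$ is $\leq$ (number of $t_j$) $= 2\lceil T\max\{T,3\}\rceil + 1$, giving a bound $\leq (\max\{T,3\}+Q^2-1)(2T\max\{T,3\}+2)\sum_n|a_n|^2$ per unit $\log(Q/q)$-weight; dividing through by the lengths and reorganizing the $Q^2\max\{T,3\}$-versus-$n$ trade-off then produces the stated $7$. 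Convergence of $\sum_n|a_n|$ (hence of $\sum|a_n|^2$) is exactly what licenses interchanging the sum, the integral, and the block decomposition, so the hypothesis is used only for this justification.
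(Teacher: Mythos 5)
There is a genuine gap here, and it is structural rather than a matter of bookkeeping. The paper's proof is a direct adaptation of Ramar\'e's Corollary~6.3: one first applies Gallagher's Fourier-analytic lemma, which bounds $\int_{-T}^{T}\big|\sum_n a_n\chi(n)n^{-it}\big|^2\,dt$ by roughly $T^2\int_0^\infty\big|\sum_{x<n\le xe^{1/T}}a_n\chi(n)\big|^2\,\frac{dx}{x}$, i.e.\ by an average of squared sums over \emph{multiplicatively} short blocks whose length at height $n$ is $\asymp n/T$; one then applies the (pre-sifted) large sieve \cref{lem:large_sieve} to each such block, getting $\asymp(n/T+Q^2)$ there, and the measure $\asymp 1/T$ of the set of $x$ whose block contains a given $n$ converts this into the per-term weight $n+Q^2T$. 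The linear growth of the block length with $n$ is exactly what produces the $n|a_n|^2$ term in \cref{cor:Ramare1}; your fixed-length blocks $I_\ell$ of size $\lfloor V\rfloor$ with $V=\max\{T,3\}$ have no mechanism to do this, since \cref{lem:large_sieve} applied to such a block gives $(V+Q^2-1)\sum_{n\in I_\ell}|a_n|^2$ with no dependence on the height of the block, contrary to your claim that "the $\ell$-th block sits at height $\asymp n$ and contributes $n|a_n|^2$."

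Two further steps in your outline would fail as written. First, replacing the $t$-integral by $\asymp TV\asymp T\max\{T,3\}$ sample points and applying \cref{lem:large_sieve} at each one multiplies the bound by the number of sample points, giving something of size $T^2(T+Q^2)\sum_n|a_n|^2$, which is far larger than $\sum_n|a_n|^2(n+Q^2\max\{T,3\})$; "dividing through by the lengths and reorganizing" cannot repair a bound that is off by unbounded factors of $T$. Second, even before that, you bound each block separately at fixed $t$, but $\big|\sum_n\big|^2$ is not $\sum_\ell\big|\sum_{n\in I_\ell}\big|^2$: recombining the blocks requires either genuine orthogonality coming from the $t$-average (which is precisely what Gallagher's lemma encodes) or a Cauchy--Schwarz step that loses a factor equal to the (unbounded) number of blocks, and your plan addresses neither. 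The heuristic that "the phases $n^{-it}$ vary by a bounded amount within each block" is also false for blocks at height below about $T\max\{T,3\}$, and in any case it is not needed: at a fixed sample point $t_j$ the twist $n^{-it_j}$ can simply be absorbed into $a_n$. The missing ingredient, in short, is Gallagher's lemma (or an equivalent passage from the $t$-mean square to sums over blocks of length $\asymp n/T$), which is the engine of Ramar\'e's proof that the paper invokes.
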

\begin{proof}
We proceed exactly as in the proof of \cite[Corollary 6.3]{Ramare}, with the exception that we use \cref{lem:large_sieve} instead of the classical large sieve inequality (as stated as \cite[Lemma 6.1]{Ramare}).
\end{proof}

Being only slightly inefficient, we obtain from \cref{cor:Ramare1} the bound
\begin{align*}
&\sum_{q\leq Q}\sum_{\substack{\chi\pmod{q} \\ \textup{$\chi$ primitive}}}\int_{-T}^{T}\Big|\sum_{\substack{n\geq 1 \\ P^{-}(n)>Q^{1.087\,571}}}a_n \chi(n)n^{-it}\Big|^2 dt\\
&\leq \frac{11.42}{\log Q}\sum_{q\leq Q^{1.087\,571}}\Big(\log \frac{Q^{1.087\,571}}{q}\Big)\sum_{\substack{\chi\pmod{q} \\ \textup{$\chi$ primitive}}}\int_{-T}^{T}\Big|\sum_{\substack{n\geq 1 \\ P^{-}(n)>Q^{1.087\,571}}}a_n \chi(n)n^{-it}\Big|^2 dt\\
&\leq \frac{79.94}{\log Q}\sum_{\substack{n\geq 1 \\ P^{-}(n)>Q^{1.087\,571}}}|a_n|^2(n+Q^{2.175\,142}\max\{T,3\}).
\end{align*}
Fix $u\in[N_{\xi\eta},N_{\xi\eta}^{*} ]$.  By \eqref{eqn:Nchoice_large_sieve}, we have that $Q^{1.087\,571}< N_{\xi\eta}$.  Thus, if we insert $T=Q>10^4$ and let
\[
a_n = \begin{cases}
	(\log n)/n&\mbox{if $n\in[N_{\xi\eta},u]$ is prime,}\\
	0&\mbox{otherwise,}
\end{cases}
\]
then
\[
\sum_{q\leq Q}\sum_{\substack{\chi\pmod{q} \\ \textup{$\chi$ primitive}}}\int_{-Q}^{Q}\Big|\sum_{N_{\xi\eta}<p\leq u}\frac{\chi(p)\log p}{p^{1+i\tau}}\Big|^2 d\tau\leq \frac{79.94}{\log Q}\sum_{N_{\xi\eta}<p\leq u}\frac{(\log p)^2}{p}\Big(1+\frac{Q^{3.175\,142}}{p}\Big).
\]
By \eqref{eqn:Nchoice_large_sieve}, we find that if $p>N_{\xi\eta}$, then $Q^{3.175\,142}/p\leq 10^{-106}$.  It follows that
\[
\sum_{q\leq Q}\sum_{\substack{\chi\pmod{q} \\ \textup{$\chi$ primitive}}}\int_{-Q}^{Q}\Big|\sum_{N_{\xi\eta}<p\leq u}\frac{\chi(p)\log p}{p^{1+i\tau}}\Big|^2 d\tau\leq \frac{80}{\log Q}\sum_{N_{\xi\eta}<p\leq u}\frac{(\log p)^2}{p}.
\]

For $x\geq 2$, it follows from \cite[(3.24)]{RosserSchoenfeld} and \cite{Mertens} that $\log x-2\leq \sum_{p\leq x}\frac{\log p}{p}\leq \log x$.  Partial summation yields $\sum_{x<p\leq y}\frac{(\log p)^2}{p}\leq \frac{1}{2}((\log y)^2+4\log y-(\log x)^2)$, so
\begin{equation}
\label{eqn:final_primes_bound}
\sum_{q\leq Q}\sum_{\substack{\chi\pmod{q} \\ \textup{$\chi$ primitive}}}\int_{-Q}^{Q}\Big|\sum_{N_{\xi\eta}<p\leq u}\frac{\chi(p)\log p}{p^{1+i\tau}}\Big|^2 d\tau\leq \frac{40}{\log Q}((\log u)^2+4\log u-(\log N_{\xi\eta})^2).	
\end{equation}
From \eqref{eqn:Nchoice_large_sieve}, \eqref{eqn:averaged_N}, and \eqref{eqn:final_primes_bound}, we deduce the bound (recall \eqref{eqn:NchiDef})
\begin{align*}
N(1-\eta,Q)&=\sum_{q\leq Q}\sum_{\substack{\chi\pmod{q} \\ \textup{$\chi$ primitive}}}N_{\chi}(1-\eta,Q)\\
&\leq 1.613\eta^2 M_{\xi\eta}^2 e^{2.672\,32M_{\xi\eta}}\int_{N_{\xi\eta}}^{N_{\xi\eta}^{*} }\Big(\sum_{q\leq Q}\sum_{\substack{\chi\pmod{q} \\ \textup{$\chi$ primitive}}}\int_{-Q}^{Q}\Big|\sum_{N_{\xi\eta}< p\leq u}\frac{\chi(p)\log p}{p^{1+i\tau}}\Big|^2 d\tau\Big)\frac{du}{u}\\
&\leq \frac{64.52\eta^2 M_{\xi\eta}^2 e^{2.672\,32M_{\xi\eta}}}{\log Q}\int_{N_{\xi\eta}}^{N_{\xi\eta}^{*}}((\log u)^2+4\log u-(\log N_{\xi\eta})^2)\frac{du}{u}\\
&\leq 27\,034\frac{M_{\xi\eta}^5}{\eta\log Q} e^{2.672\,32M_{\xi\eta}}.
\end{align*}
Since $T=Q>10^4$, it follows from \eqref{eqn:Nchoice_large_sieve} and (vii) that $63.925\leq M_{\xi\eta}\leq 1151\eta\log Q$ and 
\[
N(1-\eta,Q)\leq 31\,116\,134 M_{\xi\eta}^4 e^{2.672\,32 M_{\xi\eta}}\leq 6.441\,034\times 10^{12}e^{2.741 M_{\xi\eta}}\leq 1.180\,016\times 10^{87}(e^{968.1455}Q^{99})^{\eta}.
\]
Since $Q=T$, we have that $\mathcal{L}=\log(QT)=2\log Q$.  Therefore, using (vii), when we choose $\sigma = 1-\eta$, we arrive at
\[
N(\sigma,Q)\leq 1.180\,016\times 10^{87}(e^{968.1455}Q^{99})^{1-\sigma},\qquad 1-\frac{1}{10A}\leq\sigma\leq 1-\frac{1}{6A\log Q}.
\]
If $1-\frac{1}{6A\log Q}<\sigma\leq 1$, then by \cref{thm:GZFR}, we have that
\[
N(\sigma,Q)\leq 1 \leq 1.180\,016\times 10^{87}(e^{968.1455}Q^{99})^{1-\sigma}.
\]
We have therefore proved that
\begin{equation}
\label{eqn:ZDE_nonexceptional}
N(\sigma,Q) \leq 1.180\,016\times 10^{87}(e^{968.1455}Q^{99})^{1-\sigma},\qquad \sigma\geq 1-\frac{1}{10A}.
\end{equation}
This is slightly better than the result stated in \cref{thm:GLFZDE}.

\subsection{Bound for $N^*(\sigma,Q)$ when $Q>10^4$}

It follows from \cref{thm:GZFR} that our claimed bound is trivial when $Q\leq 400\,000$, so we assume otherwise.  If
\[
\sigma >  1-\frac{\log (\frac{1}{(1-\beta_1(Q))(670\,564.676 + 454\,935.78 \log Q)})}{104.645 + 70.996\log Q},
\]
then by \cref{thm:DH} with $T=Q>400\,000$, we must have that  $N^*(\sigma,Q)=0$.  If
\begin{equation}
\label{eqn:exceptional_range_sigma_large_sieve}
\frac{1}{2}<\sigma\leq 1-\frac{\log (\frac{1}{(1-\beta_1(Q))(670\,564.676 + 454\,935.78\log Q)})}{104.645 + 70.996\log Q},	
\end{equation}
then upon rearrangement and applying our bound $Q>400\,000$, we must have
\begin{equation}
\label{eqn:DH_Lower}
1\leq 506\,921 (1-\beta_1(Q))(\log Q)(e^{104.645} Q^{71})^{1-\sigma}.
\end{equation}
Thus, if $\sigma\geq 1-\frac{1}{10A}$ satisfies \eqref{eqn:exceptional_range_sigma_large_sieve} and $Q>400\,000$, then by \eqref{eqn:ZDE_nonexceptional} and \eqref{eqn:DH_Lower}, $N^*(\sigma,Q)$ is at most
\[
506\,921 (1-\beta_1(Q))(\log Q)(e^{104.645} Q^{71})^{1-\sigma}N(\sigma,Q)\leq 6\times 10^{92} (1-\beta_1(Q))(\log Q)(10^{466}Q^{170})^{1-\sigma}.
\]
If $(1-\beta_1(Q))\log Q$ is at least 1, then it follows from our bound for $N(\sigma,Q)$ that we can replace $(1-\beta_1(Q))\log Q$ with $\min\{1,(1-\beta_1(Q))\log Q\}$.  Again, this is slightly better than the result stated in \cref{thm:GLFZDE}.

\section{An explicit power savings in S{\'a}rk{\"o}zy's theorem}
\label{sec:Sarkozy}

In this section, we prove \cref{thm:Sarkozy}. For ease of comparison, we will adhere as closely as possible to the setup and notation of Green \cite{Green} and make explicit his key results on zeros of Dirichlet $L$-functions.  To this end, we state the following corollary of \cref{thm:GLFZDE}.

\begin{corollary}
\label{cor:GLFZDE}
Let $Q\geq 3$ and $\sigma\geq 0$.  With $N(\sigma,Q)$ and $N^*(\sigma,Q)$ as in \eqref{eqn:NchiDef}, there holds
\[
N(\sigma,Q)\leq 10^{88}(10^{421}Q^{127})^{1-\sigma},\qquad N^*(\sigma,Q)\leq  10^{93} \min\{1,(1-\beta_1(Q))\log Q\}(10^{466}Q^{198})^{1-\sigma}.
\]
\end{corollary}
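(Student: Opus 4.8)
The plan is to bootstrap from Theorem \ref{thm:GLFZDE}, which already gives the desired bounds in the range $\sigma \ge \tfrac{39}{40}$, by handling the complementary range $0 \le \sigma < \tfrac{39}{40}$ with a cruder but completely elementary estimate, and then checking that the explicit constants and exponents in the statement are large enough to absorb the two pieces simultaneously. The key observation is that the total number of nontrivial zeros in the box $|\im(\rho)| \le Q$ with $\re(\rho) > 0$ is at most $N(0,Q) \le 0.64\, Q^3 \log Q$ for $Q \ge 10^4$ by Lemma \ref{lem:basic_density}, and is trivially bounded for $3 \le Q \le 10^4$ by Lemma \ref{lem:Platt} together with the classical Riemann--von Mangoldt count (or by inflating $N(0,Q)$ to a uniform bound like $Q^3\log Q$ valid for all $Q \ge 3$). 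So for $\sigma$ in the left range we simply want to check that $0.64\, Q^3 \log Q \le 10^{88}(10^{421}Q^{127})^{1-\sigma}$, and similarly for $N^*$.

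Concretely, here is the order of the steps. First, for $\sigma \ge \tfrac{39}{40}$ invoke Theorem \ref{thm:GLFZDE} directly; since $10^{421}Q^{127} \ge 10^{421}Q^{99}$ and $1-\sigma \ge 0$, the bound there, $N(\sigma,Q) \le 10^{88}(10^{421}Q^{99})^{1-\sigma}$, is no larger than the claimed $10^{88}(10^{421}Q^{127})^{1-\sigma}$, so this range is immediate; the $N^*$ case is identical using $Q^{198} \ge Q^{170}$. Second, for $0 \le \sigma < \tfrac{39}{40}$ we have $1-\sigma > \tfrac{1}{40}$, so $(10^{421}Q^{127})^{1-\sigma} \ge 10^{421/40} Q^{127/40} = 10^{10.525}\, Q^{3.175}$. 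It then suffices to verify $N(0,Q) \le 10^{88} \cdot 10^{10.525} Q^{3.175}$, i.e. roughly $N(0,Q) \le 10^{98} Q^{3.175}$, which follows comfortably from $N(0,Q) \le Q^3 \log Q$ for all $Q \ge 3$ (note $\log Q \le Q^{0.175}$ for $Q\ge 3$, with plenty of room). Third, the $N^*$ bound in this range follows the same way: since $\min\{1,(1-\beta_1(Q))\log Q\} \le 1$, one needs $N^*(0,Q) \le N(0,Q) \le 10^{93}(10^{466}Q^{198})^{1-\sigma}$, and for $1-\sigma > \tfrac{1}{40}$ the right side is at least $10^{93}\cdot 10^{11.65} Q^{4.95}$, which again dominates $Q^3\log Q$ by a wide margin.

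I expect the only mild obstacle to be the bookkeeping at the seam $\sigma = \tfrac{39}{40}$: one must make sure that the exponent inflation from $99$ to $127$ (respectively $170$ to $198$) is chosen so that the "trivial-range" inequality and the "Theorem \ref{thm:GLFZDE}-range" inequality both hold with the \emph{same} constant $10^{88}$ (resp. $10^{93}$) and the \emph{same} base $10^{421}$ (resp. $10^{466}$). Since the margins in both ranges are enormous (the trivial bound is polynomial of degree barely above $3$ while the exponent $127/40 \approx 3.18$ already beats it, and the Theorem-range bound is monotone in the chosen exponent), there is no real tension, and the verification is a short numerical check; no new analytic input is required beyond Lemma \ref{lem:basic_density}, Lemma \ref{lem:Platt}, and Theorem \ref{thm:GLFZDE} itself.
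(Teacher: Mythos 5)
Your treatment of $N(\sigma,Q)$ is fine and is essentially the paper's argument: for $\sigma\geq\frac{39}{40}$ quote \cref{thm:GLFZDE} (the inflated exponents only weaken the bound), and for $0\leq\sigma<\frac{39}{40}$ use $1-\sigma>\frac{1}{40}$ so that the right-hand side is at least $10^{88}\cdot 10^{421/40}Q^{127/40}$, which dominates the trivial count $N(0,Q)\leq 0.64\,Q^3\log Q$ from \cref{lem:basic_density} (with \cref{lem:Platt} disposing of small $Q$).

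The $N^*$ part, however, contains a genuine logical gap. You write that ``since $\min\{1,(1-\beta_1(Q))\log Q\}\leq 1$, one needs $N^*(0,Q)\leq 10^{93}(10^{466}Q^{198})^{1-\sigma}$,'' but this uses the inequality in the wrong direction: the bound you must prove carries the factor $\min\{1,(1-\beta_1(Q))\log Q\}$, which can be much smaller than $1$ when an exceptional zero exists (i.e. when $Q>400\,000$ and $\beta_1(Q)$ is very close to $1$), and proving the statement \emph{without} that factor does not imply the statement \emph{with} it. To close the gap you need a \emph{lower} bound on $1-\beta_1(Q)$, and this is exactly what the paper invokes: by \cref{thm:GZFR}, either $\beta_1(Q)<1-\Cr{ZFR}/\log Q$ or $1-\beta_1(Q)\geq 100/(\sqrt{q_1}(\log q_1)^2)\geq 100/(\sqrt{Q}(\log Q)^2)$, so in all cases $\min\{1,(1-\beta_1(Q))\log Q\}\gtrsim 100/(\sqrt{Q}\log Q)$ for $Q\geq 10^4$ (and equals $1$ for smaller $Q$, where \cref{lem:Platt} rules out real nontrivial zeros). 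With this extra loss of $\sqrt{Q}\log Q/100$ the comparison still goes through comfortably, since $10^{93}\cdot 10^{466/40}\cdot Q^{198/40-1/2}/\log Q$ still dominates $0.64\,Q^3\log Q$; but without citing the explicit bound on $1-\beta_1(Q)$ your argument does not prove the stated $N^*$ inequality in the range $0\leq\sigma<\frac{39}{40}$.
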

\begin{proof}
When $\sigma\geq \frac{39}{40}$, \cref{cor:GLFZDE} is weaker than \cref{thm:GLFZDE}.  When $0\leq\sigma<\frac{39}{40}$, we consider two cases.  If $Q\geq 10^{4}$, then the inflated exponents make our estimates weaker than what one can deduce from \cref{lem:basic_density} and the estimate $1-\beta_1(Q) \geq 100/(\sqrt{Q} (\log Q)^2)$ from \cref{thm:GZFR}.  If $Q<10^4$, then the total number of zeros that need to be counted is bounded above by $N(0,10^4)$.  Our upper bound for $N(\sigma,Q)$ when $0\leq\sigma<\frac{39}{40}$ and $Q<10^4$ greatly exceeds the bound for $N(0,10^4)$ that follows from \cref{lem:basic_density} and \cref{thm:GZFR} again.
\end{proof}

Define
\[
\lambda_1 = \frac{1}{20},\qquad \lambda_2 = 10^{103}, \qquad \lambda_3 = 198, \qquad M = 7 \times 3^6 \times 2^{22\,993},
\]
which correspond to explicit choices of the constants in \cite[Section 7]{Green}.  It is important to note that \cite{Green} adopts the non-standard notation of writing nontrivial zeros $\rho$ of $L(s,\chi)$ as $\beta+i\gamma = 1-\sigma+i\gamma$.  In order to avoid confusion with notation in the rest of this paper while making the comparison with \cite{Green} straightforward, we write most of the results in this section in terms of $1-\beta$ instead of $\beta$.  We record an explicit zero free region via two propositions.

\begin{proposition}
\label{prop:BG_ZFR}
	Let $Q \geq 10$ and $\mathscr{L}(s,Q)$ be as in \eqref{eqn:prod_L-functions}.  With possibly one exception, say $\rho_{*}$, every zero $\rho=\beta+i\gamma$ of $\mathscr{L}(s,Q)$ satisfies $1-\beta\geq \frac{\lambda_1}{\log Q}$. If the exceptional zero $\rho_{*}$ exists, then there exists a unique primitive quadratic character $\chi_{*}\pmod{q_{*}}$ with $q_{*}\leq Q$ such that $\rho_{*}$ is a real and simple zero of $L(s,\chi_{*})$.  The zero $\rho_{*}$, if it exists, is called the exceptional zero at scale $Q$.
\end{proposition}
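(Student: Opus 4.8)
The plan is to derive Proposition~\ref{prop:BG_ZFR} directly from Theorem~\ref{thm:GZFR}, together with the structural information (real, simple, quadratic, unique character) already recorded in Lemma~\ref{lem:ZFR} and Corollary~\ref{cor:Page}. The only genuinely new point is a numerical comparison: since $9.645\,908\,801<10$, the constant $\Cr{ZFR}=\tfrac{1}{9.645\,908\,801}$ from Lemma~\ref{lem:ZFR} satisfies $\Cr{ZFR}>\tfrac{1}{10}=2\lambda_1$, i.e.\ $\lambda_1<\tfrac{\Cr{ZFR}}{2}$. This is what makes the value $\lambda_1=\tfrac{1}{20}$ admissible at scale $Q$.

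First I would take a zero $\rho=\beta+i\gamma$ of $\mathscr{L}(s,Q)$ with $|\gamma|\le Q$ (the zeros relevant at scale $Q$, consistently with \eqref{eqn:NchiDef}) and suppose $\beta\ge 1-\tfrac{\lambda_1}{\log Q}$. Because $|\gamma|\le Q$ we have $\max\{Q,Q|\gamma|\}\le Q^2$, so $\log\max\{Q,Q|\gamma|\}\le 2\log Q$ and hence
\[
\frac{\Cr{ZFR}}{\log\max\{Q,Q|\gamma|\}}\;\ge\;\frac{\Cr{ZFR}}{2\log Q}\;>\;\frac{\lambda_1}{\log Q},
\]
so that $\re(\rho)\ge 1-\tfrac{\Cr{ZFR}}{\log\max\{Q,Q|\im(\rho)|\}}$. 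By the first assertion of Theorem~\ref{thm:GZFR}, the only zero of $\mathscr{L}(s,Q)$ in that region is $\beta_1(Q)$, so $\rho=\beta_1(Q)$. This proves that at most one zero, namely $\rho_{*}:=\beta_1(Q)$, can violate $1-\beta\ge \tfrac{\lambda_1}{\log Q}$.

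If this exceptional zero exists, then $\beta_1(Q)\ge 1-\tfrac{\lambda_1}{\log Q}>1-\tfrac{\Cr{ZFR}}{\log Q}$, so the second assertion of Theorem~\ref{thm:GZFR} applies and gives $Q>400\,000$, the simplicity of $\beta_1(Q)$ as a zero of $\mathscr{L}(s,Q)$, and a unique primitive character $\chi_1\pmod{q_1}$ with $q_1\in(400\,000,Q]\subseteq[1,Q]$ and $L(\beta_1(Q),\chi_1)=0$; by Lemma~\ref{lem:ZFR} and Corollary~\ref{cor:Page} this $\chi_1$ is quadratic and $\beta_1(Q)$ is a real zero of $L(s,\chi_1)$, which is moreover simple since it is a simple zero of the full product $\mathscr{L}(s,Q)$. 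Taking $\rho_{*}=\beta_1(Q)$, $\chi_{*}=\chi_1$, $q_{*}=q_1$ yields every claim.

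The bookkeeping above is routine; the one step that requires care---and which I would flag as the real content of the proposition---is confirming that $\lambda_1=\tfrac{1}{20}$ is small enough, which comes down to the inequality $\lambda_1<\tfrac{\Cr{ZFR}}{2}$ and thus to the fact that the relevant zeros have $|\gamma|\le Q$ (so that $\log\max\{Q,Q|\gamma|\}\le 2\log Q$). Working with a wider height range $|\gamma|\le Q^{k}$ would instead demand $\lambda_1<\tfrac{\Cr{ZFR}}{k+1}$; the choice $\tfrac{1}{20}$ is calibrated to the scale-$Q$ normalization used throughout Section~\ref{sec:Sarkozy}.
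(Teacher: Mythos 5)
Your proof is correct and takes essentially the same route as the paper, which deduces the proposition directly from \cref{thm:GZFR}; you have merely written out what that citation leaves implicit, namely that the relevant zeros are those of height $|\gamma|\le Q$ (the scale-$Q$ normalization of \eqref{eqn:NchiDef}), so the zero-free region there has width at least $\tfrac{1}{2\cdot 9.645\,908\,801\,\log Q}>\tfrac{1/20}{\log Q}$, together with the real/simple/quadratic/unique-character claims coming from \cref{lem:ZFR} and \cref{cor:Page}. Your flagged numerical point $2\lambda_1=\tfrac{1}{10}<\tfrac{1}{9.645\,908\,801}$ is exactly the check that makes $\lambda_1=\tfrac{1}{20}$ admissible, so nothing further is needed.
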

\begin{proof}
	This explicit version of \cite[Proposition 7.1]{Green} follows from \cref{thm:GZFR}.
\end{proof}

Second, we record two explicit results on log-free zero density estimates.

\begin{proposition} \label{prop:BG-density-no-exception}
	Let $Q \geq 10$.  If $\alpha\geq 0$, then
	\[
	\sum_{q \leq Q} ~\sum_{\substack{\chi \pmod q \\ \textup{$\chi$ primitive}}}~\sum_{\substack{\rho=\beta+i\gamma \\  \beta \geq \alpha \\ |\gamma| \leq Q}} 1 \leq \lambda_2 Q^{\lambda_3 (1-\alpha)}.
	\]
\end{proposition}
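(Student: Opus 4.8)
The plan is to identify the inner triple sum with (essentially) the zero-counting function $N(\alpha,Q)$ of \eqref{eqn:NchiDef} and then feed it through \cref{cor:GLFZDE}, falling back on the elementary count \cref{lem:basic_density} in the intermediate range of $Q$ where the enormous bases appearing in \cref{cor:GLFZDE} are too wasteful for the deliberately generous target constants $\lambda_2=10^{103}$ and $\lambda_3=198$. If $\alpha\geq 1$, then by \cref{thm:GZFR} together with $\beta_{1}(Q)\in[0,1)$ the product $\mathscr{L}(s,Q)$ has no zero with $\re(s)\geq 1$, so the inner sum is empty; assume henceforth $0\leq\alpha<1$. Every nontrivial zero $\rho=\beta+i\gamma$ has $\beta\geq 0$, so the inner sum equals the number of nontrivial zeros of $\mathscr{L}(s,Q)$ with $\beta\geq\alpha$ and $|\gamma|\leq Q$; since $\mathscr{L}(s,Q)$ has only finitely many zeros in a bounded region, this count equals $\lim_{\sigma\to\alpha^{-}}N(\sigma,Q)$ (interpreted as $N(0,Q)$ when $\alpha=0$), and since the bound of \cref{cor:GLFZDE} is continuous in $\sigma$ it suffices to prove
\[
N(\alpha,Q)\leq 10^{103}\,Q^{198(1-\alpha)},\qquad Q\geq 10,\quad 0\leq\alpha<1.
\]

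\emph{Large $Q$.} Suppose $Q\geq 10^{6}$. Then \cref{cor:GLFZDE} gives $N(\alpha,Q)\leq 10^{88}\cdot 10^{421(1-\alpha)}\cdot Q^{127(1-\alpha)}$. Since $Q\geq 10^{6}$ and $0<1-\alpha\leq 1$, we have $Q^{71(1-\alpha)}\geq 10^{426(1-\alpha)}\geq 10^{421(1-\alpha)}$, hence
\[
N(\alpha,Q)\leq 10^{88}\cdot Q^{71(1-\alpha)}\cdot Q^{127(1-\alpha)}=10^{88}\,Q^{198(1-\alpha)}\leq 10^{103}\,Q^{198(1-\alpha)}.
\]

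\emph{Moderate $Q$.} Suppose $10\leq Q<10^{6}$. The map $Q\mapsto N(0,Q)$ is non-decreasing, because enlarging $Q$ only adjoins further $L$-factors to $\mathscr{L}(s,Q)$ and widens the window $|\gamma|\leq Q$; thus $N(\alpha,Q)\leq N(0,Q)\leq N(0,10^{6})$, and \cref{lem:basic_density} gives $N(0,10^{6})\leq 0.64\cdot 10^{18}\log(10^{6})<10^{20}$. On the other hand $Q\geq 10$ and $1-\alpha\geq 0$ force $10^{103}Q^{198(1-\alpha)}\geq 10^{103}>10^{20}$. This completes the proof.

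\emph{On the difficulty.} No genuine obstacle arises: the only point needing care is arranging the case split so that for large $Q$ the log-free estimate \cref{cor:GLFZDE} already wins with room to spare, while for moderate $Q$ the classical count \cref{lem:basic_density} falls far below the generous target. We stress that it is $N(\sigma,Q)$, not $N^{*}(\sigma,Q)$, that is being bounded, so no exceptional-zero factor $1-\beta_{1}(Q)$ enters the estimate.
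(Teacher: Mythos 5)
Your proof is correct and takes essentially the same route as the paper: the paper deduces this proposition in one line from \cref{cor:GLFZDE}, and the moderate-$Q$ fallback to \cref{lem:basic_density} that you spell out (needed because $10^{88}(10^{421}Q^{127})^{1-\alpha}$ does not directly dominate $10^{103}Q^{198(1-\alpha)}$ when $10\le Q\lesssim 10^{6}$ and $\alpha$ is small) is exactly the device the paper itself employs in proving \cref{cor:GLFZDE}. Your treatment of the boundary issue $\beta\ge\alpha$ versus the strict inequality in \eqref{eqn:NchiDef}, and of the trivial case $\alpha\ge 1$, is also sound.
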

\begin{proof}
	This explicit version of \cite[Proposition 7.4]{Green} follows from \cref{cor:GLFZDE}.
\end{proof}

\begin{proposition} \label{prop:BG-density-exception}
	Let $Q \geq 10$. If $\alpha\geq 0$ and there is an exceptional zero $\rho_{*}$ at scale $Q$, then
	\[
	\sum_{q \leq Q} ~\sum_{\substack{\chi \pmod q \\ \textup{$\chi$ primitive}}}~\sum_{\substack{\rho=\beta+i\gamma\neq\rho_* \\ \beta \geq \alpha \\ |\gamma| \leq Q}} 1 \leq \lambda_2 (1-\rho_{*})(\log Q) Q^{\lambda_3 (1-\alpha)}.
	\]
\end{proposition}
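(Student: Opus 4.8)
The plan is to read the bound off \cref{cor:GLFZDE} once one knows that, under the present hypothesis, the exceptional zero $\rho_*$ is nothing but $\beta_1(Q)$; for the range where $\alpha$ is bounded away from $1$ (where \cref{cor:GLFZDE} is too lossy) the plan is instead to combine the trivial zero count of \cref{lem:basic_density} with the Siegel-type lower bound for $1-\beta_1(Q)$ supplied by \cref{thm:GZFR}.

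First I would unpack the hypothesis. Since the exceptional zero $\rho_*$ exists, it is a real zero of $\mathscr{L}(s,Q)$ with $\re(\rho_*)>1-\lambda_1/\log Q\ge 1-\Cr{ZFR}/\log Q$, the last step because $\lambda_1=\tfrac{1}{20}<\Cr{ZFR}$. Hence $\beta_1(Q)\ge\rho_*>1-\Cr{ZFR}/\log Q$, so \cref{thm:GZFR} applies and yields $Q>400\,000$, that $\beta_1(Q)$ is a simple real zero, and $1-\beta_1(Q)\ge 100/(\sqrt{q_1}(\log q_1)^2)\ge 100/(\sqrt{Q}(\log Q)^2)$. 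Since $\rho_*$ and $\beta_1(Q)$ are both zeros of $\mathscr{L}(s,Q)$ in the half-plane $\re(s)>1-\lambda_1/\log Q$, the uniqueness in \cref{prop:BG_ZFR} forces $\rho_*=\beta_1(Q)$; in particular $0<1-\rho_*<\lambda_1/\log Q<1$, so that $\min\{1,(1-\beta_1(Q))\log Q\}=(1-\rho_*)\log Q$. Let $S$ denote the left-hand side of the asserted inequality; then $S$ counts nontrivial zeros $\rho\neq\rho_*$ with $\beta\ge\alpha$ and $|\gamma|\le Q$, so $S\le N^*(\sigma,Q)$ for every $\sigma\in[0,\alpha)$, and also $S\le N(0,Q)\le 0.64\,Q^{3}\log Q$ by \cref{lem:basic_density} (legitimate since $Q>400\,000>10^{4}$).

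Next I would split on the size of $\alpha$. If $\alpha\ge\tfrac{49}{50}$, apply \cref{cor:GLFZDE} and let $\sigma\uparrow\alpha$ (the bound is continuous in $\sigma$ and the zero set is discrete), obtaining $S\le 10^{93}(1-\rho_*)(\log Q)(10^{466}Q^{198})^{1-\alpha}$; since $1-\alpha\le\tfrac{1}{50}$ we have $(10^{466})^{1-\alpha}\le 10^{466/50}<10^{10}$, hence $S\le 10^{103}(1-\rho_*)(\log Q)Q^{198(1-\alpha)}=\lambda_2(1-\rho_*)(\log Q)Q^{\lambda_3(1-\alpha)}$. If $0\le\alpha<\tfrac{49}{50}$, use instead $S\le 0.64\,Q^{3}\log Q$ together with the two lower bounds already recorded: since $198(1-\alpha)>\tfrac{198}{50}=3.96$,
\[
\lambda_2(1-\rho_*)(\log Q)Q^{\lambda_3(1-\alpha)}\ge 10^{103}\cdot\frac{100}{\sqrt{Q}\,(\log Q)^{2}}\cdot(\log Q)\cdot Q^{3.96}=\frac{10^{105}\,Q^{3.46}}{\log Q},
\]
so it remains to verify $0.64\,Q^{3}\log Q\le 10^{105}Q^{3.46}/\log Q$, that is, $0.64\,(\log Q)^{2}\le 10^{105}Q^{0.46}$; as $(\log Q)^{2}/Q^{0.46}$ is decreasing for $Q>400\,000$ and is below $1$ at $Q=400\,000$, this holds throughout the range.

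The step I expect to be the real nuisance is the numerical bookkeeping rather than any idea: \cref{cor:GLFZDE} has the right exponent only near $\sigma=1$ and carries the enormous constant $10^{466}$ (versus $\lambda_2=10^{103}$), so for $\alpha$ away from $1$ one is thrown back on the trivial bound $N(0,Q)\ll Q^{3}\log Q$, and the surplus powers of $Q$ there can be defeated only by exploiting that the hypothesis itself forces $Q>400\,000$ and $1-\rho_*\gg Q^{-1/2}(\log Q)^{-2}$; checking that the resulting inequalities hold uniformly in $Q$ is the crux. The only other thing requiring care—identifying $\rho_*$ with $\beta_1(Q)$ and passing from $N^*(\sigma,Q)$ with $\sigma<\alpha$ to the count over $\beta\ge\alpha$—is routine by discreteness of the zero set.
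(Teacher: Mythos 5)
Your proof is correct and is essentially the paper's own argument: the paper deduces this proposition directly from \cref{cor:GLFZDE}, and your identification $\rho_*=\beta_1(Q)$ (via \cref{prop:BG_ZFR} and \cref{thm:GZFR}, using $\lambda_1=\tfrac{1}{20}<\tfrac{1}{9.645\,908\,801}$) followed by applying that corollary with $\sigma$ just below $\alpha$ is exactly the intended deduction. The one point worth highlighting is that your second branch is genuinely needed rather than mere bookkeeping: since \cref{cor:GLFZDE} carries the factor $(10^{466})^{1-\sigma}$ while $\lambda_2=10^{103}$, the direct application only covers $1-\alpha\leq \tfrac{10}{466}$, and for smaller $\alpha$ one must, as you do, fall back on $N(0,Q)\leq 0.64\,Q^{3}\log Q$ from \cref{lem:basic_density} together with $1-\beta_1(Q)\geq 100/(\sqrt{Q}(\log Q)^{2})$ and $Q>400\,000$ from \cref{thm:GZFR} --- which is precisely the device the authors themselves use to extend \cref{thm:GLFZDE} to all $\sigma\geq 0$ in the proof of \cref{cor:GLFZDE}, so your filled-in case analysis matches the paper's implicit intent.
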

\begin{proof}
	This explicit version of \cite[Proposition 7.5]{Green} follows from \cref{cor:GLFZDE}.
\end{proof}

These preliminary results establish a helpful technical estimate. 

\begin{proposition}
\label{prop:ZDE_application}
	Set $B = 323\,905$. The following holds for any parameter $Q \geq 10$. 
	\begin{itemize}
		\item If there is no exceptional zero at scale $Q$, then
		\[
		4M \sum_{q \leq Q} ~\sum_{\substack{\chi \pmod q \\ \textup{$\chi$ primitive}}} ~\sum_{\substack{\rho=\beta+i\gamma \\  \beta \geq 1/2 \\ |\gamma| \leq Q} } Q^{-B(1-\beta)} \leq 1. 
		\]
		\item If there is an exceptional zero $\rho_{*}$ at scale $Q$, then
		\[
		Q^{-B(1-\rho_{*})} + 2M \sum_{q \leq Q} ~\sum_{\substack{\chi \pmod q \\ \textup{$\chi$ primitive}}} ~\sum_{\substack{\rho=\beta+i\gamma  \neq \rho_{*} \\ \beta \geq 1/2 \\ |\gamma| \leq Q} }Q^{-B(1-\beta)} \leq 1.
		\]
	\end{itemize}
\end{proposition}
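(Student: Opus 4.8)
The plan is to reduce both inequalities to the log-free density estimates (\cref{prop:BG-density-no-exception} and \cref{prop:BG-density-exception}) combined with the zero-free region (\cref{prop:BG_ZFR}) via a standard partial summation, and then to close with two short explicit numerical checks. For $\tfrac12\le\alpha\le1$ write
\[
\mathcal{Z}(\alpha)=\sum_{q\le Q}\ \sum_{\substack{\chi\pmod q\\ \chi\ \mathrm{primitive}}}\#\{\rho=\beta+i\gamma\colon\beta\ge\alpha,\ |\gamma|\le Q,\ L(\rho,\chi)=0\},
\]
and let $\mathcal{Z}^*(\alpha)$ be the same count with $\rho\ne\rho_*$. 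For $\beta\ge\tfrac12$ we have the identity $Q^{-B(1-\beta)}=Q^{-B/2}+B(\log Q)\int_{1/2}^{\beta}Q^{-B(1-\alpha)}\,d\alpha$, so Fubini gives
\[
\sum_{\substack{\rho\colon L(\rho,\chi)=0\\ \beta\ge1/2,\,|\gamma|\le Q}}Q^{-B(1-\beta)}=Q^{-B/2}\mathcal{Z}(\tfrac12)+B(\log Q)\int_{1/2}^{1}Q^{-B(1-\alpha)}\mathcal{Z}(\alpha)\,d\alpha,
\]
and likewise with $\mathcal{Z}^*$ replacing $\mathcal{Z}$ on both sides.

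Next I would truncate the $\alpha$-integral. By \cref{prop:BG_ZFR}, in the exception-free case every zero of $\mathscr{L}(s,Q)$ satisfies $\beta\le 1-\lambda_1/\log Q=:\sigma_0$, so $\mathcal{Z}(\alpha)=0$ for $\alpha>\sigma_0$; in the exceptional case the same holds for $\mathcal{Z}^*$. Inserting $\mathcal{Z}(\alpha)\le\lambda_2 Q^{\lambda_3(1-\alpha)}$ (resp. $\mathcal{Z}^*(\alpha)\le\lambda_2(1-\rho_*)(\log Q)Q^{\lambda_3(1-\alpha)}$) and bounding $\int_{1/2}^{\sigma_0}Q^{-(B-\lambda_3)(1-\alpha)}\,d\alpha\le \frac{e^{-(B-\lambda_3)\lambda_1}}{(B-\lambda_3)\log Q}$, one gets, with $D:=Q^{-(B-\lambda_3)/2}+\frac{B}{B-\lambda_3}e^{-(B-\lambda_3)\lambda_1}$,
\[
\sum_{\substack{\rho\colon L(\rho,\chi)=0\\ \beta\ge1/2,\,|\gamma|\le Q}}Q^{-B(1-\beta)}\le\lambda_2 D,\qquad \sum_{\substack{\rho\ne\rho_*\\ \beta\ge1/2,\,|\gamma|\le Q}}Q^{-B(1-\beta)}\le\lambda_2(1-\rho_*)(\log Q)\,D.
\]
For the first bullet it then remains to check $4M\lambda_2 D\le1$: since $Q\ge10$ the term $Q^{-(B-\lambda_3)/2}$ in $D$ is utterly negligible, so this reduces to $4M\lambda_2\cdot\tfrac{B}{B-\lambda_3}\,e^{-(B-\lambda_3)\lambda_1}\le1$, which holds because with $\lambda_1=\tfrac1{20}$, $\lambda_2=10^{103}$, $\lambda_3=198$, $M=7\cdot3^6\cdot2^{22993}$, $B=323905$ one has $\log(4M)+\log\lambda_2-(B-\lambda_3)\lambda_1<0$.

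For the second bullet the extra term $Q^{-B(1-\rho_*)}$ cannot be made small, since the exceptional zero may be arbitrarily close to $1$; the gain must instead come from the complementary slack $1-Q^{-B(1-\rho_*)}$. Here I would use that, since $\rho_*$ is the exceptional zero, \cref{prop:BG_ZFR} forces $x:=1-\rho_*<\lambda_1/\log Q$. Setting $E=2M\lambda_2 D$ and $y=Bx\log Q\in(0,B\lambda_1)$, the desired inequality is $e^{-y}+\tfrac{E}{B}y\le1$, i.e. $\tfrac{E}{B}\le\frac{1-e^{-y}}{y}$. Because $t\mapsto(1-e^{-t})/t$ is decreasing on $(0,\infty)$ (as $t+1\le e^{t}$), it suffices that $E\le\frac{1-e^{-B\lambda_1}}{\lambda_1}$; the right-hand side is essentially $1/\lambda_1=20$, while $E<1$ by the same estimate used for the first bullet, so this holds.

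The partial summation and the two arithmetic checks are routine; the genuinely delicate point is the exceptional case, where one must notice that the factor $1-\rho_*$ supplied by \cref{prop:BG-density-exception} is exactly matched by the slack $1-Q^{-B(1-\rho_*)}$, and that the monotonicity of $(1-e^{-t})/t$ collapses the entire second bullet into the single inequality $2M\lambda_2 D\le(1-e^{-B\lambda_1})/\lambda_1$. If either numerical margin were uncomfortably small, the remedy would be to enlarge $B$ slightly (cheap in later applications) rather than to sharpen the density estimates.
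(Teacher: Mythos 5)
Your proposal is correct and follows essentially the same route as the paper: partial summation of $Q^{-B(1-\beta)}$ against the log-free density estimates, truncation at $1-\lambda_1/\log Q$ via the zero-free region, a numerical check of the form $\log(4M\lambda_2)<(B-\lambda_3)\lambda_1$ (your margin of about $0.7$ in the exponent is genuine), and in the exceptional case the observation that the factor $1-\rho_*$ from the density estimate is absorbed by the slack $1-Q^{-B(1-\rho_*)}$. Your monotonicity argument for $(1-e^{-t})/t$ is just a repackaging of the paper's elementary inequality $e^{-Bx}+\tfrac12 x\le 1$ for $0<x\le\lambda_1$, so nothing essentially new is needed.
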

\begin{proof}
This result parallels \cite[Proposition 7.6]{Green}.  Assume there is no exceptional zero at scale $Q$. By partial summation and \cref{prop:BG-density-no-exception}, we obtain the bound
\begin{align*}
&\sum_{q \leq Q} ~\sum_{\substack{\chi \pmod q \\ \textup{$\chi$ primitive}}} ~\sum_{\substack{\rho=\beta+i\gamma \\  \beta \geq 1/2 \\ |\gamma| \leq Q} } Q^{-B(1-\beta)}\\
&\leq \lambda_2(e^{-\lambda_1(B-\lambda_3)}-Q^{-(B-\lambda_3)/2})+\lambda_2 B(\log Q)\int_{1/2}^{1-\lambda_1/\log Q}Q^{-(B-\lambda_3)(1-\sigma)}d\sigma\\
&=\frac{\lambda_2(2B-\lambda_3)}{B-\lambda_3}(e^{-\lambda_1(B-\lambda_3)}-Q^{-(B-\lambda_3)/2})\leq 1.24 \times 10^{-6926}< 1.28\times 10^{-6926}\leq \frac{1}{4M}.
\end{align*}
If there is an exceptional zero $\rho_{*}$ at scale $Q$, then a similar calculation with \cref{prop:BG-density-exception} shows
 \begin{align*}
 Q^{-B(1-\rho_{*})} + 2M \sum_{q \leq Q} ~\sum_{\substack{\chi \pmod q \\ \textup{$\chi$ primitive}}} ~\sum_{\substack{\rho=\beta+i\gamma \neq \rho_* \\  \beta \geq 1/2 \\ |\gamma| \leq Q} }Q^{-B(1-\beta)} 
 & \leq  Q^{-B(1-\rho_{*})} + \frac{1}{2} (1-\rho_{*}) \log Q.
 \end{align*}
Note that $B$ satisfies $e^{-Bx} + \frac{1}{2} x \leq  1$ for $0 < x \leq \lambda_1 = \frac{1}{20}$. Thus, when $x = (1-\rho_{*}) \log Q$ in this exceptional case, the desired result follows. 
\end{proof}

This leads to an explicit version of a key technical result required by Green. 

\begin{proposition} \label{prop:pre-Sarkozy}
	Set $c = 9\times 10^{-14}$. Suppose that $\sigma_0 \leq \frac{1}{8}$.  Let $Q\geq 10$, and let $N\geq Q^{1/c}$ be a large integer. There exists $T\in [N^c,N^{1/120}]$, such that at least one of the following holds.
	\begin{enumerate}
		\item We have that
			\[
	\sum_{q \leq T}~\sum_{\substack{\chi \pmod q \\ \textup{$\chi$ primitive}}} ~\sum_{\substack{\rho=\beta+i\gamma \\ \frac{1}{2} \leq \beta \leq 1- \sigma_0 \\ |\gamma| \leq T}} N^{-\frac{1}{16}(1-\beta)} \leq \frac{1}{2M}.
		\]
		\item Let  $\rho_1=\beta_1+i\gamma_1$ be a zero belonging to a character with conductor $q_1 \leq T$ and $|\gamma_1| \leq T$ such that $\beta_1$ is maximal. Then $\rho_1$ is a simple, real zero of a real character $\chi_1$ of conductor $q_1 \leq T^{1/10^{6}}$, and
		\[
		N^{-\frac{1}{16}(1-\beta_1)} + 2M \sum_{q \leq T}~\sum_{\substack{\chi \pmod q \\ \textup{$\chi$ primitive}}}~\sum_{\substack{\rho=\beta+i\gamma\neq \rho_* \\1/2 \leq \beta \leq 1-\sigma_0 \\ |\gamma| \leq T} } N^{-\frac{1}{16}(1-\beta)} \leq 1.		
		\]
	\end{enumerate}
	Also, regardless of whether an exceptional zero exists, we have the bound
	\[
	\sum_{q \leq Q} ~\sum_{\substack{\chi \pmod q \\ \textup{$\chi$ primitive}}} ~\sum_{\substack{\rho=\beta+i\gamma \\  \beta \geq 1/2 \\ |\gamma| \leq T} } 1 \ll T.
	\]
\end{proposition}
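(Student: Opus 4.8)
The plan is to run a pigeonhole over a short geometric sequence of scales, at each of which \cref{prop:ZDE_application} is already in exactly the required shape, and to use Landau's repulsion \cref{lem:Landau} to pin down the conductor of the exceptional zero. Concretely, with $B=323\,905$ as in \cref{prop:ZDE_application} I would take $J=20$ and, for $0\le j\le J$, set $T_j:=N^{c\cdot 2^j}$, so that $T_0=N^c$, $T_{j+1}=T_j^2$, and a short computation (using $c=9\times10^{-14}$, $2^{20}\ge 10^6$, and $16B<2^{23}$) confirms
\[
N^c\le T_j\le T_J\le N^{1/(16B)}\le N^{1/120},\qquad T_J^{1/10^6}\ge N^c,\qquad T_J^B\le N^{1/16}.
\]
Since $N\ge Q^{1/c}\ge10^{1/c}$ is large, every $T_j$ exceeds $10$ (indeed $400\,000$), so \cref{prop:BG_ZFR} and \cref{prop:ZDE_application} apply at each scale $T_j$.

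The case split is on whether some $T_j$ (with $0\le j\le J$) carries no exceptional zero at scale $T_j$. If so, the no-exception clause of \cref{prop:ZDE_application} at scale $Q=T_j$ bounds the sum $\sum_{q\le T_j}\sum_\chi\sum_{\rho}T_j^{-B(1-\beta)}$ (over zeros with $\beta\ge1/2$, $|\gamma|\le T_j$) by $1/(4M)$, and since $T_J^B\le N^{1/16}$ gives $N^{-(1-\beta)/16}\le T_j^{-B(1-\beta)}$ for every $\beta\le1$, discarding the zeros with $\beta>1-\sigma_0$ yields conclusion~(1) with $T=T_j$. Otherwise every $T_j$ has an exceptional zero $\rho^{(j)}=\beta^{(j)}$, a simple real zero of a real primitive character $\chi^{(j)}\pmod{q^{(j)}}$ with $q^{(j)}\le T_j$, $q^{(j)}>400\,000$, and $\beta^{(j)}>1-\lambda_1/\log T_j$ (by \cref{thm:GZFR}). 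I would then argue $\chi^{(j)}=\chi^{(j+1)}$ for all $j$: if not, then $\chi^{(j)},\chi^{(j+1)}$ are distinct real primitive characters with $q^{(j)}q^{(j+1)}\le T_jT_{j+1}=T_j^3$, so \cref{lem:Landau} gives $\min\{\beta^{(j)},\beta^{(j+1)}\}<1-\frac{0.3103\ldots}{\log(q^{(j)}q^{(j+1)}/17)}<1-\frac{0.3103\ldots}{3\log T_j}$, while $\beta^{(j)}>1-\frac{\lambda_1}{\log T_j}$ and $\beta^{(j+1)}>1-\frac{\lambda_1}{2\log T_j}$ with $\lambda_1=\frac1{20}<\frac{0.3103\ldots}{3}$, a contradiction. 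Hence all the $\chi^{(j)}$ equal a single character $\chi_1\pmod{q_1}$ with $q_1=q^{(0)}\le T_0=N^c\le T_J^{1/10^6}$.

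I would finish by taking $T=T_J$ and $\rho_1:=\rho^{(J)}$: being the exceptional zero at scale $T_J$, it has strictly larger real part than every other zero of conductor $\le T_J$ with $|\gamma|\le T_J$ (all of which satisfy $\beta\le 1-\lambda_1/\log T_J$ by \cref{prop:BG_ZFR}), so it is the maximal such zero; it is simple, real, and belongs to the real character $\chi_1$ of conductor $q_1\le T^{1/10^6}$. Applying the exceptional clause of \cref{prop:ZDE_application} at scale $Q=T_J$ with $\rho_*=\rho_1$, and using $T_J^B\le N^{1/16}$ to replace each $T_J^{-B(1-\beta)}$ by the larger $N^{-(1-\beta)/16}$, gives $N^{-\frac1{16}(1-\beta_1)}+2M\sum_{q\le T_J}\sum_\chi\sum_{\rho\ne\rho_1,\ \beta\ge1/2,\ |\gamma|\le T_J}N^{-\frac1{16}(1-\beta)}\le1$, and discarding the zeros with $\beta>1-\sigma_0$ is conclusion~(2). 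For the last display, since $q\le Q\le N^c\le T$ for every character occurring, the count of zeros with $\beta\ge1/2$ and $|\gamma|\le T$ is controlled by the Riemann--von Mangoldt formula (cf.\ \cref{lem:basic_density}), which I expect to be routine.

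The step I expect to be the main obstacle is the claim that in the second case the exceptional character cannot change between consecutive scales $T_j$ and $T_{j+1}=T_j^2$. This is exactly where the smallness of $\lambda_1=\frac1{20}$ relative to the Landau repulsion constant $0.3103\ldots$, at the geometric ratio forced by $T_J^B\le N^{1/16}$, is essential: it is what keeps $q_1$ as small as $N^c$ and hence $\le T^{1/10^6}$, and making the numerology of $\lambda_1$, $B$, $J$, and the exponent $1/10^6$ all fit together is the delicate part.
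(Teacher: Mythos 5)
Your treatment of alternatives (1) and (2) is correct and essentially self-contained: the doubling scales $T_j=N^{c\cdot 2^j}$, $0\le j\le 20$, the dichotomy on whether some scale is exception-free, the use of \cref{lem:Landau} with $\lambda_1=\tfrac1{20}<\tfrac{0.3103}{3}$ to force the exceptional character to stabilize across consecutive scales, and the passage from $T_j^{-B(1-\beta)}$ to $N^{-\frac{1}{16}(1-\beta)}$ via $c\,2^{20}B<\tfrac1{16}$ all check out numerically and logically (in particular $q^{(j)}>400\,000$ is justified by \cref{thm:GZFR}/\cref{lem:Platt}, so Landau's hypothesis is met). This differs from the paper, which does not reprove this part at all: it verifies $c<\min\{\tfrac{1}{16B},\tfrac{10^{-6}\lambda_1}{32\log(4M)}\}$ and then imports Green's proof of his Proposition 7.7 verbatim, with \cref{prop:BG_ZFR}--\cref{prop:ZDE_application} substituted for Green's inputs. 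Your argument buys a transparent, checkable derivation needing only $c\le 1/(2^{20}\cdot 16B)$ and the Landau numerology; the paper's route buys brevity and guaranteed compatibility with the rest of Green's machinery.

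There is, however, a genuine gap in your handling of the final bound. The Riemann--von Mangoldt count (cf.\ \cref{lem:basic_density}) cannot give $\ll T$: the total number of zeros with conductor $q\le Q$ and $|\gamma|\le T$ is of size about $Q^2T\log(QT)$, and even for a single character it is $\asymp T\log T$; moreover at least half of these zeros (all of them under GRH) have $\beta\ge\tfrac12$, so the restriction $\beta\ge\tfrac12$ is invisible to any von Mangoldt-type formula, and indeed with the literal threshold $\beta\ge\tfrac12$ no bound $\ll T$ uniform in $Q$ is even true. The bound must be read as a count of zeros with $\beta\ge 1-\sigma_0\ge\tfrac78$, and it requires a log-free zero-density estimate, not a zero-counting formula: the paper obtains it from Jutila's estimate \eqref{eqn:Jutila} applied at level $T$ with $\sigma\ge 1-\tfrac18$ and $\epsilon=\tfrac1{100}$, giving $\ll T^{(6+\epsilon)/8}\ll T$. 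Note that even the paper's own \cref{thm:GLFZDE}/\cref{cor:GLFZDE} would not suffice here, since at $\sigma=\tfrac78$ their exponents $99$ or $127$ produce a power of $T$ far exceeding $1$; the appeal to Jutila's exponent $6+\epsilon$ is essential, and your proposal omits this step entirely.
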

\begin{proof}
	For (1) and (2), we have that $c < \min( \frac{1}{16B}, \frac{10^{-6} \lambda_1}{32 \log(4M)} ) = 9.79\ldots \times 10^{-14}$, so the proof is identical to \cite[Proposition 7.7]{Green} with our Propositions \ref{prop:BG_ZFR}-\ref{prop:ZDE_application} replacing \cite[Propositions 7.1, 7.4, 7.5, and 7.6]{Green}.  The last bound follows from Jutila's estimate  \eqref{eqn:Jutila} with $\sigma\geq 1-\frac{1}{8}$ and $\epsilon=\frac{1}{100}$.
\end{proof}

Finally, with all the necessary explicit results in hand, we may prove \cref{thm:Sarkozy}.

\begin{proof}[Proof of \cref{thm:Sarkozy}]
	In  \cite[Theorem 2.2]{Green}, Green proves the existence of constants $0 < \kappa_2 < \kappa_1$ such that if $A \subseteq \{1,\ldots, N\}$ satisfies the hypotheses of \cref{thm:Sarkozy}, then $|A| \ll N^{1-(\kappa_1-\kappa_2)}$.  By the discussion in \cite[Section 15.1]{Green}, there exists a constant $c>0$ such that if $\tau_0\in[c,\frac{1}{120}]$ and $T=N^{\tau_0}$ in \cite[Proposition 7.7]{Green}, then we may take $\kappa_1 = \frac{\tau_0}{6\times 10^4}$ and $\kappa_2 = \frac{2\tau_0}{10^6}$.  (In \cite[Section 15.1]{Green}, these are written as $\kappa_1=\frac{1}{6}c_2\tau_0$ and $\kappa_2 = 2c_3 \tau_0$, where $c_i=(\frac{1}{100})^i$.) 	Since our \cref{prop:pre-Sarkozy} makes \cite[Proposition 7.7]{Green} explicit, we may take $c =9\times 10^{-14}$, in which case
	\[
	\kappa_1-\kappa_2 = \tau_0\Big(\frac{1}{6\times 10^4}-\frac{2}{10^6}\Big) \geq \frac{9}{10^{14}}\Big(\frac{1}{6\times 10^{4}}-\frac{2}{10^6}\Big) = \frac{1.32}{10^{18}},
	\]
	as desired.
\end{proof}

\bibliographystyle{abbrv}
\bibliography{Explicit_Bombieri}

\def\cprime{$'$}
\begin{thebibliography}{10}

\bibitem{BennetSharpley}
C.~Bennett and R.~Sharpley.
\newblock {\em Interpolation of operators}, volume 129 of {\em Pure and Applied
  Mathematics}.
\newblock Academic Press, Inc., Boston, MA, 1988.

\bibitem{BennettMartinOBryant2}
M.~A. Bennett, G.~Martin, K.~O'Bryant, and A.~Rechnitzer.
\newblock Counting zeros of {D}irichlet {$L$}-functions.
\newblock {\em Math. Comp.}, 90(329):1455--1482, 2021.

\bibitem{Bombieri2}
E.~Bombieri.
\newblock Le grand crible dans la th\'eorie analytique des nombres.
\newblock {\em Ast\'erisque}, (18):103, 1987.

\bibitem{Bordignon}
M.~Bordignon.
\newblock Explicit bounds on exceptional zeroes of {D}irichlet {$L$}-functions.
\newblock {\em J. Number Theory}, 201:68--76, 2019.

\bibitem{Bordignon2}
M.~Bordignon.
\newblock Explicit bounds on exceptional zeroes of {D}irichlet {$L$}-functions
  {II}.
\newblock {\em J. Number Theory}, 210:481--487, 2020.

\bibitem{Davenport}
H.~Davenport.
\newblock {\em Multiplicative number theory}, volume~74 of {\em Graduate Texts
  in Mathematics}.
\newblock Springer-Verlag, New York-Berlin, second edition, 1980.
\newblock Revised by Hugh L. Montgomery.

\bibitem{Opera}
J.~Friedlander and H.~Iwaniec.
\newblock {\em Opera de cribro}, volume~57 of {\em American Mathematical
  Society Colloquium Publications}.
\newblock American Mathematical Society, Providence, RI, 2010.

\bibitem{Gallagher}
P.~X. Gallagher.
\newblock A large sieve density estimate near {$\sigma =1$}.
\newblock {\em Invent. Math.}, 11:329--339, 1970.

\bibitem{Green2}
B.~{Green}.
\newblock {An improved bound for S{\'a}rk{\"o}zy's theorem for shifted primes,
  assuming GRH}.
\newblock {\em manuscript}.

\bibitem{Green}
B.~{Green}.
\newblock {On S{\'a}rk{\"o}zy's theorem for shifted primes}.
\newblock {\em arXiv e-prints}, page arXiv:2206.08001, June 2022.

\bibitem{HBLinnik}
D.~R. Heath-Brown.
\newblock Zero-free regions for {D}irichlet {$L$}-functions, and the least
  prime in an arithmetic progression.
\newblock {\em Proc. London Math. Soc. (3)}, 64(2):265--338, 1992.

\bibitem{Hiary2}
G.~A. Hiary.
\newblock An explicit hybrid estimate for {$L(1/2+it,\chi)$}.
\newblock {\em Acta Arith.}, 176(3):211--239, 2016.

\bibitem{Hiary}
G.~A. Hiary.
\newblock An explicit van der {C}orput estimate for {$\zeta(1/2+it)$}.
\newblock {\em Indag. Math. (N.S.)}, 27(2):524--533, 2016.

\bibitem{HoeySatoTate}
A.~Hoey, J.~Iskander, S.~Jin, and F.~Trejos~Suárez.
\newblock {An unconditional explicit bound on the error term in the
  {S}ato--{T}ate conjecture}.
\newblock {\em Q. J. Math.}
\newblock Accepted for publication.

\bibitem{Huxley}
M.~N. Huxley.
\newblock Large values of {D}irichlet polynomials. {III}.
\newblock {\em Acta Arith.}, 26(4):435--444, 1974/75.

\bibitem{Jutila}
M.~Jutila.
\newblock On {L}innik's constant.
\newblock {\em Math. Scand.}, 41(1):45--62, 1977.

\bibitem{KNW}
H.~Kadiri, N.~Ng, and P.-J. Wong.
\newblock The least prime ideal in the {C}hebotarev density theorem.
\newblock {\em Proc. Amer. Math. Soc.}, 147(6):2289--2303, 2019.

\bibitem{KolesnikStraus}
G.~Kolesnik and E.~G. Straus.
\newblock On the sum of powers of complex numbers.
\newblock In {\em Studies in pure mathematics}, pages 427--442. Birkh\"{a}user,
  Basel, 1983.

\bibitem{Linnik}
U.~V. Linnik.
\newblock On the least prime in an arithmetic progression.
\newblock {\em Rec. Math. [Mat. Sbornik] N.S.}, 15(57):139--178,347--368, 1944.

\bibitem{Lucier}
J.~Lucier.
\newblock Difference sets and shifted primes.
\newblock {\em Acta Math. Hungar.}, 120(1-2):79--102, 2008.

\bibitem{Makai}
E.~Makai.
\newblock On a minimum problem. {II}.
\newblock {\em Acta Math. Acad. Sci. Hungar.}, 15:63--66, 1964.

\bibitem{McCurley}
K.~S. McCurley.
\newblock Explicit zero-free regions for {D}irichlet {$L$}-functions.
\newblock {\em J. Number Theory}, 19(1):7--32, 1984.

\bibitem{Mertens}
F.~Mertens.
\newblock Ein beitrag zur analytischen zahlentheorie.
\newblock {\em J. Reine Angew. Math.}, 1874(78):46--62, 1874.

\bibitem{MossinghoffTrudgian}
M.~J. Mossinghoff and T.~S. Trudgian.
\newblock Nonnegative trigonometric polynomials and a zero-free region for the
  {R}iemann zeta-function.
\newblock {\em J. Number Theory}, 157:329--349, 2015.

\bibitem{PlattTrudgian}
D.~Platt and T.~Trudgian.
\newblock The {R}iemann hypothesis is true up to {$3\cdot10^{12}$}.
\newblock {\em Bull. Lond. Math. Soc.}, 53(3):792--797, 2021.

\bibitem{Platt}
D.~J. Platt.
\newblock Numerical computations concerning the {GRH}.
\newblock {\em Math. Comp.}, 85(302):3009--3027, 2016.

\bibitem{Rademacher}
H.~Rademacher.
\newblock On the {P}hragm\'{e}n-{L}indel\"{o}f theorem and some applications.
\newblock {\em Math. Z}, 72:192--204, 1959/1960.

\bibitem{Ramare}
O.~Ramar\'{e}.
\newblock An explicit density estimate for {D}irichlet {$L$}-series.
\newblock {\em Math. Comp.}, 85(297):325--356, 2016.

\bibitem{Rosser}
B.~Rosser.
\newblock Explicit bounds for some functions of prime numbers.
\newblock {\em Amer. J. Math.}, 63:211--232, 1941.

\bibitem{RosserSchoenfeld}
J.~B. Rosser and L.~Schoenfeld.
\newblock Approximate formulas for some functions of prime numbers.
\newblock {\em Illinois J. Math.}, 6:64--94, 1962.

\bibitem{RuzsaSanders}
I.~Z. Ruzsa and T.~Sanders.
\newblock Difference sets and the primes.
\newblock {\em Acta Arith.}, 131(3):281--301, 2008.

\bibitem{Sarkozy}
A.~S\'{a}rk\"{o}zy.
\newblock On difference sets of sequences of integers. {III}.
\newblock {\em Acta Math. Acad. Sci. Hungar.}, 31(3-4):355--386, 1978.

\bibitem{Turan}
V.~T. S{\'o}s and P.~Tur{\'a}n.
\newblock On some new theorems in the theory of {D}iophantine approximations.
\newblock {\em Acta Math. Acad. Sci. Hungar.}, 6:241--255, 1955.

\bibitem{TZ1}
J.~Thorner and A.~Zaman.
\newblock An explicit bound for the least prime ideal in the {C}hebotarev
  density theorem.
\newblock {\em Algebra Number Theory}, 11(5):1135--1197, 2017.

\bibitem{Wang}
R.~Wang.
\newblock On a theorem of {S}\'{a}rk\"{o}zy for difference sets and shifted
  primes.
\newblock {\em J. Number Theory}, 211:220--234, 2020.

\end{thebibliography}

\end{document}